\tikzset{
    labl/.style={anchor=south, rotate=90, inner sep=.5mm}
}
\theoremstyle{plain}
\newtheorem{theorem}{Theorem}[section]
\newtheorem{corollary}[theorem]{Corollary}
\newtheorem{lemma}[theorem]{Lemma}
\newtheorem{proposition}[theorem]{Proposition}
\theoremstyle{definition}
\newtheorem{definition}[theorem]{Definition}
\theoremstyle{remark}
\newtheorem{remark}[theorem]{Remark}
\newtheorem{example}[theorem]{Example}
\DeclareMathOperator{\Ann}{Ann}
\DeclareMathOperator{\supp}{supp}
\DeclareMathOperator{\codim}{codim}
\DeclareMathOperator{\Bl}{Bl}
\DeclareMathOperator{\rk}{rk}
\DeclareMathOperator{\Hilb}{Hilb}
\DeclareMathOperator{\Hom}{Hom}
\DeclareMathOperator{\cd}{cd}
\DeclareMathOperator{\In}{In}
\DeclareMathOperator{\p}{P}
\DeclareMathOperator{\T}{T}
\newcommand{\G}{\mathcal{G}} %building set
\newcommand{\LL}{\mathcal{L}} %geometric poset
\newcommand{\N}{\mathcal{N}} %nested set complex
\newcommand{\ZZ}{\mathbb{Z}}
\newcommand{\KK}{\mathcal{K}} %layer
\newcommand{\CC}{\mathbb{C}} %complex numbers
\newcommand{\AAA}{\mathcal{A}} %arrangement
\newcommand{\RR}{\mathbb{R}}
\newcommand{\PP}{\mathbb{P}}
\title{Cohomology Rings of Toric Wonderful Model}
\author{Lorenzo Giordani}
\author{Roberto Pagaria}
\author{Viola Siconolfi}
\date{\today}
\begin{document}

\begin{abstract}
    We describe the cohomology ring of toric wonderful model for arbitrary building set, including the case of non well-connected ones.
    Our techniques are based on blowups of posets, on Gr\"obner basis over rings and admissible functions.
\end{abstract}

\maketitle

%\tableofcontents

\section{Introduction}
%geometric poset [Bibby][Bibby, Delucchi]
We consider a $n$-dimensional algebraic torus $T$ with caracter group $X^{\ast}(T)=\Hom(T,\ZZ)$, a split direct summand $\Gamma < X^{\ast}(T) $, and any group homomorphism $\phi \colon \Gamma \rightarrow \CC^{\ast}$.
The subvariety of $T$
\[\KK_{\Gamma,\phi}:=\{t\in T \mid \chi(t) = \phi(\chi) \textnormal{, for all } \chi\in\Gamma \}\]
has codimension equal to $\rk_\ZZ(\Gamma)$.
This is a coset of the subtorus $H = \bigcap_{\chi\in\Gamma}\ker(\chi)\subseteq T$.
A \emph{toric arrangement} in $T$ is a finite collection of subtori $\AAA = \{\KK_1,\ldots,\KK_m\}$.
A subtorus of codimension one is called an hypertorus; a toric arrangement given by hypertori is said to be \emph{divisorial}.
%An arrangement is \emph{divisorial}, if all subtori are hypertori, i.e. if they have codimension $1$.
The research over recent years was focused on the topology of the complement $M(\AAA):= T \setminus \bigcup_{i=1,\dots, m} \mathcal{K}_i$ and how the topological invariants depend on the combinatorics of the toric arrangement.
A ubiquitous technique in algebraic geometry consists in finding nice compactifications. For the open variety $M(\mathcal{A})$, this was done by De Concini and Gaiffi in \cite{DeConciniGaiffi2018} by defining a \emph{projective wonderful model}.
The wonderful model depends on the choice of a toric variety and a building set, for technical reasons the researchers focused on building set $\G$ with a certain combinatorial condition, called \emph{well-connectedness}.
In this article we describe the cohomology ring of toric wonderful models for arbitrary building sets, we report an example (\ref{ex:well}) that shows how removing the well-connected hypothesis makes the choice of the building set more convenient.

Toric arrangements were introduced in the seminal article \cite{DeConciniProcesi2005}, where the additive structure of $H^*(M(\AAA); \CC)$ is presented.
Based on that article, the cohomology ring for the complement of divisorial toric arrangements were presented in \cite{CDDMP2020} and in \cite{CallegaroDelucchi17, CallegaroDelucchi24}.
The homotopy type of the complement were studied in \cite{MociSettepanella,dAntonioDelucchi12, dAntonioDelucchi15} with focus on fundamental group and minimality.
The combinatorics of divisorial toric arrangements were studied using aritmetic matroids
\cite{MociTuttePoly,DAdderioMoci,BrandenMoci,PagariaOAM,PagariaPaolini} and posets \cite{DelucchiRiedel,Bibby2022}.
The first partial compatification was introduced in \cite{Moci2012}, but the projective wonderful model was constructed in \cite{DeConciniGaiffi2018}.
%The construction depends on the choice of a \emph{building set} $\G$; for technical reasons the researchers focused on building set $\G$ with a certain combinatorial condition, called \emph{well-connectness}.
The construction depends on the choice of a 
%well connected 
\emph{building set} $\G$; for well-connected building sets the cohomology ring of the wonderful model was presented \cite{DeConciniGaiffi2019, DeConciniGaiffiPapini2020} and an additive basis was found \cite{GaiffiPapiniSiconolfi2022}.
Also the Morgan model was studied \cite{PagariaMoci, DeConciniGaiffi2021} for well-connected building sets.

Our main result (\Cref{thm:main}) consists in a presentation by generators and relations of the cohomology of the toric wonderful model $Y(X_\Delta,\G)$, that depends on the choice of a toric variety $X_\Delta$ with an \emph{equal sign property} and a building set $\G$.
Without the hypothesis that $\G$ is well-connected, the cohomology ring is not generated in degree one and our presentation is rather convoluted.

The paper is organized as follow: in \Cref{sec:blow_up_poset} we extend the definition and properties of blowups of meet-semilattices due to \cite{FeichtnerKozlov2004} to a more general class of poset given by \textit{local lattices}.
In \Cref{sec:geometric_blowups}, we prove a lemma (\Cref{thm:blowup}) on the cohomology of blowups that generalizes the one by Keel \cite{Keel92} removing a surjectivity assumption. This is necessary, because the inclusions we deal with do not induce surjective pullbacks in cohomology.
After a quick review of construction and properties of toric wonderful model (\Cref{sec:toric_wonderful_model}), the definition of our presentation $R(X_\Delta,\G)$ by generators and relations is given in \Cref{sec:presentation} and certain pullbacks and pushforward maps are computed.
In \Cref{sec:additive_basis} we provide a Gr\"obner basis for our presentation $R(X_\Delta,\G)$, however the proof is postponed to \Cref{sec:main_proof}. The set of irreducible monomials with respect to the Gr\"obner basis form an additive basis for the cohomology ring; we describe them in terms of \emph{admissible functions}. This section generalizes the results in \cite{GaiffiPapiniSiconolfi2022}.
The core of our proofs is in \Cref{sec:main_proof}, where we construct a surjective map $R(X_\Delta,\G) \twoheadrightarrow H^*(Y(X_\Delta,\G);\ZZ)$.
By using some lemmas from the previous sections we prove with deletion-contraction technique our main theorem: indeed the two $\ZZ$-modules $R(X_\Delta,\G)$ and $H^*(Y(X_\Delta,\G);\ZZ)$ have the same rank and $R(X_\Delta,\G)$ is torsion free.
In the last Section (\ref{sec:strata}), we describe the strata as a product of toric and hyperplanes wonderful model.
This observation suggests that toric wonderful models forms a module over the operad of hyperplane wonderful models.

\section{
Blowup of local lattices}
\label{sec:blow_up_poset}

In this section we recall basic notions from \cite{FeichtnerKozlov2004}, and we adapt them to our setting of toric arrangements and their underlying posets. 
The class of poset that we are going to consider are local lattices.
We denote by $x \vee y$ the set of all possible least upper bound and by $x \wedge y$ the set of all greatest lower bound of $x$ and $y$.
If the poset has a minimum then the set $x \wedge y$ is always non-empty.

\begin{definition}
Given a poset $\LL$ and $p\in\LL$ we define the
\emph{blowup of poset} at $p$:
\[ \Bl_{p} \LL = \{x \mid x \not \geq p\} \sqcup \{ (p,x,y) \mid x \not \geq p, \, y \in p \vee x\}  \]
with the ordering given by
\begin{enumerate}
    \item $x\geq x'$ if $x\geq x'$,
    \item $(p,x,y) \geq (p,x',y')$ if $x \geq x'$ and $y \geq y'$,
    \item $(p,x,y) \geq x'$ if $x \geq x'$
\end{enumerate}
 together with a projection map 
\[ \pi = \pi_\LL \colon \Bl_{p} \LL \to \LL \]
defined by $\pi(x)=x$ and $\pi((p,x,y))=y$.
\end{definition}

When the join between two elements $p,x$ is a singleton, we denote with $p\vee x$ its only element, instead of the one-element set. In the same situation, for blowups of posets, we write $(p,x)$ in place of $(p,x,p\vee x)$.

Let $\LL$ be a poset with minimum and $\p$ any poset property. We say that $\LL$ has property $\p$ \emph{locally}, if every lower interval in $\LL$ has property $\p$. For instance, we say that $\LL$ is a local lattice, local geometric lattice, or locally boolean, if every lower interval of $\LL$ is a lattice, a geometric lattice, or a boolean poset, respectively.

%\begin{lemma}
%Let $(p,x,y) \in \Bl_{p} \LL$, then 
%\[ (\Bl_{p} \LL)_{\leq (p,x,y)} \simeq \Bl_{p} (\LL_{\leq y}).\]
%\end{lemma}

%When we want to emphasize the ambient poset, we denote by $[x,y]^\LL=\{ z \in \LL \mid x \leq z \leq y\}$ the interval between $x$ and $y$ in $\LL$.

\begin{lemma}
\label{blowupislocalgeom}
If $\LL$ is a local lattice (respectively a  local geometric lattice) and $p\in \LL$, then $\Bl_{p}\LL$ is a local lattice (respectively a local geometric lattice).
\end{lemma}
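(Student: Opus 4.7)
The plan is to verify directly that every lower interval of $\Bl_p \LL$ is a lattice (and, under the stronger hypothesis, a geometric lattice), distinguishing two cases according to the type of the top element.

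If the top element is a first-type $x$ with $x \not\geq p$, then rule (3) prevents any second-type element from being $\leq x$, and every $x' \leq x$ in $\LL$ automatically satisfies $x' \not\geq p$. Hence $[\hat{0}, x]_{\Bl_p \LL}$ coincides with $[\hat{0}, x]_\LL$, and the hypothesis on $\LL$ transfers directly.

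Suppose instead the top element is a second-type $(p, x, y)$. The key preliminary fact is that $[\hat{0}, y]_\LL$ is a lattice containing $p$, so for every $x' \leq x$ the set $p \vee x'$ in $\LL$ has a unique element bounded above by $y$, namely the lattice join $y(x') := p \vee_{[\hat{0}, y]_\LL} x'$ (in particular $y(x) = y$). This identifies the interval as
\[ [\hat{0}, (p,x,y)]_{\Bl_p \LL} = \{x' : x' \leq x\} \;\sqcup\; \{(p, x', y(x')) : x' \leq x\}. \]
I would then exhibit meets and joins in three sub-cases: first-type pairs inherit the operations from $[\hat{0}, x]_\LL$; a mixed pair $x_1$ and $(p, x_2, y(x_2))$ has join $(p, x_1 \vee x_2, y(x_1 \vee x_2))$ and meet $x_1 \wedge x_2$; two second-type elements $(p, x_i, y(x_i))$ have join $(p, x_1 \vee x_2, y(x_1 \vee x_2))$ and meet $(p, x_1 \wedge x_2, y(x_1 \wedge x_2))$. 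Each formula is verified from the three ordering rules together with the uniqueness of $y(\cdot)$.

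When $\LL$ is moreover locally geometric, the atoms of $[\hat{0},(p,x,y)]_{\Bl_p \LL}$ are the atoms of $[\hat{0}, x]_\LL$ together with the single new atom $(p, \hat{0}, p)$; the mixed-pair join formula gives $(p, x', y(x')) = x' \vee (p, \hat{0}, p)$, so the interval is atomistic. I would then introduce a candidate rank function equal to $\rk_\LL(x')$ on first-type elements and to $\rk_\LL(x') + 1$ on second-type ones, and check the submodular inequality in each of the three sub-cases: after cancelling the constant offsets of $+1$ appearing in the mixed and second-type cases, every instance reduces to the semimodular inequality inside $[\hat{0}, x]_\LL$, which holds by hypothesis. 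Finite length is inherited from $\LL$. The main technical point is to confirm that this candidate function really is the combinatorial rank; I would handle it by describing covers in $\Bl_p \LL$ (the covers $x' \lessdot x''$ and $(p, x', y(x')) \lessdot (p, x'', y(x''))$ correspond exactly to covers $x' \lessdot x''$ in $[\hat{0}, x]_\LL$, and the only new covers are $x' \lessdot (p, x', y(x'))$), so that every saturated chain from $\hat{0}$ to $(p, x, y)$ threads through a unique first-to-second-type switch and has length $\rk_\LL(x) + 1$ regardless of where the switch occurs.
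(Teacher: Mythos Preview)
Your argument is correct, but it takes a considerably more laborious route than the paper. The paper observes that the description you give of the interval below a second-type element,
\[
[\hat 0,(p,x,y)]_{\Bl_p\LL}=\{x':x'\le x\}\sqcup\{(p,x',y(x')):x'\le x\},
\]
is precisely the statement that this interval is isomorphic, as a poset, to the product $[\hat 0,x]_\LL\times B_1$, where $B_1=\{\hat 0<(p,\hat 0)\}$. Once that isomorphism is recorded, both conclusions follow in one line from the standard fact that a product of (geometric) lattices is again a (geometric) lattice. Your explicit meet/join formulas, atom description, rank function, and cover analysis are exactly what one would write down to verify the product structure by hand, so the two proofs are equivalent in content; the paper's version simply packages all of your case-checks into a single structural observation. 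The advantage of your approach is that it is entirely self-contained and makes the uniqueness of $y(x')$ (which the paper leaves implicit) completely explicit; the advantage of the paper's approach is brevity and the immediate transfer of any poset-theoretic property preserved under products.
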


\begin{proof}
    For the elements $\{x\mid x\ngeq p\}$ in the blowup, there is nothing to prove. Consider any element $(p,x,y)$ with $x\ngeq p$, and $y\in p\vee x$. Then, the following isomorphism of posets holds:
    $$[\hat0,(p,x,y)]^{\Bl_p\LL}\simeq [\hat0, x]^\LL \times B_1,$$
    where $B_1$ is the poset on 2 elements $\{\hat0< (p,\hat0)\}$. Thus, the left hand side is a (geometric) lattice, because the right hand side is a product of (geometric) lattices.
\end{proof}

%\begin{lemma}
%Let $z \in \LL$ be such that $z \geq p$, then 
%\[ \pi_{\LL}^{-1}(\LL_{\leq z}) \simeq \Bl_{p} (\LL_{\leq z}).\]
%\end{lemma}
%\begin{proof}
%    Both sides are subposets of $\Bl_p(\LL)$, thus it suffices to see that they coincide as sets.
%    We have
%    \begin{align*}
%    \Bl_p(\LL_{\leq z}) &= \{x\in \LL_{\leq z} | x \ngeq p\} \sqcup \{(p,x,y) \mid x\ngeq p, \, y\leq z\} = \pi_{\LL}^{-1}(\LL_{\leq z}). \qedhere
%    \end{align*}
%\end{proof}

A \emph{ranked poset} is a pair $(\LL,\rk)$ with $\LL$ a poset with minimum $\hat{0}$ and a rank function $\rk \colon \LL \rightarrow \mathbb{N}$ such that:
    \begin{enumerate}
        \item $\rk(\hat0) = 0$
        \item for any $X\leq Y$ in $\LL$, $\rk(X)\leq \rk(Y)$.
    \end{enumerate}
    Our definition of ranked poset does not coincide with the one used in other context and it is completely different from the standard definition of graded posets. A ranked (local) lattice is a ranked poset that is also a (local) lattice.

    The main instance of local lattices will be certain posets associated to toric (and more generally abelian) arrangements. Given a toric arrangement $\AAA$, the set of connected components of intersections of its subtori ordered by reverse inclusion is the \emph{poset of layers}, often denoted by $\LL(\AAA)$. This poset referred to as the ``combinatorics" of the arrangement, and it is its most important invariant. 
    We always endow the poset of layers with the rank function $\cd \colon \LL\rightarrow \mathbb{N}$ assigning to each layer its complex codimension in the ambient torus.
    If the toric arrangement is divisorial, then $\LL$ is a local geometric lattice, and also a \emph{geometric poset} in the sense of \cite{Bibby2022}. At the end of the article, we will exhibit an example of divisorial arrangement of subvarietes that is not an abelian arrangement, whose intersection lattice is a local geometric lattice, but not a geometric poset (\Cref{nonposetoflayers}).

\begin{definition}
\label{buildingsetdef}
A subset $\G\subset \LL\setminus\{\hat0\}$ of a lattice $\LL$ is a \emph{combinatorial building set} if for any $x\in \LL\setminus \{\hat0\}$ and $\max \G_{\leq x}=\{x_1,\ldots,x_k\}$ there is an isomorphism of posets:
\[
\phi_{x} \colon \prod_{j=1}^{k}[\hat0,x_j]\rightarrow [\hat0,x],
\]
defined by $$(y_1,\ldots, y_k)\mapsto y_1\vee\cdots\vee y_k.$$
In a ranked lattice $(\mathcal{L},\rk)$, a combinatorial building set is also \emph{geometric}, if furthermore:
$$\sum_{j=1}^{k}\rk(x_j) = \rk(x).$$
The elements of $\max \G_{\leq x}$ are called the \emph{$\G$-factors} of $x$.
\end{definition}

\begin{definition}
    Let $\LL$ be a semilattice and $\G\subset \LL$ be a building set.
    A subset $S$ of $\G$ is $\G$-\emph{nested} if, for any set of incomparable elements $x_1,\ldots,x_t$ in $S$ with $t\geq 2$, the join exists and does not belong to $\G$. 
\end{definition}

We extend the definitions of building and nested sets to local lattices.

\begin{definition}
    A \emph{combinatorial (geometric) building set} in a (ranked) local lattice $\LL$ is $\G \subseteq \LL$ such that for any $x \in \max (\LL)$ the set $\G_{\leq x}$ is a combinatorial (geometric) building set in the (ranked) lattice $\LL_{\leq x}$.
\end{definition}

\begin{definition}
A combinatorial building set $\G$ is \emph{well-connected} if for any subset $\{G_1,\ldots,G_k\}$ of elements in $\G$, if $|G_1 \vee \ldots \vee G_k|\geq 2$ then $G_1\vee\ldots \vee G_k\subset \G$. 
\end{definition}

\begin{remark}
    In \cite{BackmanDanner2024} the authors observe that in the research after the article \cite{FeichtnerKozlov2004}, maximal building set have been the preferred choice, notably also in \cite{AdiprasitoHuhKac2018}.
    In the article, they study the family of all possible building sets of meet-semilattices and their relation with convex geometries.
    In particular, they show that the family of building sets of a meet-semilattice forms a lattice and a supersolvable convex geometry \cite[Theorem 2.3, Theorem 3.2]{BackmanDanner2024}.
\end{remark}

\begin{definition}
Let $\LL$ be a ranked local lattice and $\G\subset \LL$ be a building set. A $\G$-\emph{nested set} is a pair $(S,x)$, where  $S\subseteq \G$ and $x \in \vee S$ are such that $S$ is a $\G_{\leq x}$-nested set in $\LL_{\leq x}$. 
\end{definition}

\begin{definition}
If $\LL$ is a ranked local lattice with $\G\subset \LL$ a building set and $x\in \LL$, then the $\G$-\emph{factors} of $x$ are the elements in the set
    \[ F(\G, x) = \max (\G_{\leq x}).\]
\end{definition}

\begin{comment}
\begin{remark}
\label{inheritedproperties}
    The definition of building sets and nested sets for posets specialize to the same definitions for semilattices given in \cite{FeichtnerKozlov2004}. Several properties for semilattices are immediately inherited by our generalization, for example:\\
        Let $Y\subseteq S$ subset of non-comparable elements in $S$ with $(S,x)$ $\G$-nested in $\LL$. Then, letting $y=\bigvee Y$ in $\LL_{\leq x}$, one has that 
        \begin{itemize}
            \item $(\bigvee Y,y)$ is $\G$-nested in $\LL$;
            \item The set of $\G$-factors of $y$ is $F(\G,y) = Y$
        \end{itemize}
\end{remark}
\end{comment}

We present an example on how the request of being well-connected may influence the cardinality of  a chosen building set:
\begin{example}\label{ex:well}
Consider the following divisorial toric arrangement $A(n,c)$ in $(\mathbb{C}^*)^n$ depending on a parameter $c \in \mathbb{N}_{>1}$.
Let $A(n,c)$ the collection of hypertori given by $H_1=\{t_1=1\}$ and $H_i=\{t_1t_i^c=1\}$ for $i=2,3,\dots, n$.
The minimal building set in $\G_{\min} =\{H_i\}_{i=1,\dots, n}$ consisting only in the atoms of $\LL_{A(n,c)}$ and has cardinality $n$.
However, the only well-connected building set is the maximal one; in particular, it has cardinality $\frac{(c+1)^{n}-1}{c}$.
Notice that the growth of the minimal well-connected building set is exponential in $n$ and for fixed value $n$ is unbounded, although the toric arrangement is already simple normal crossing.
\end{example}

We define a total order $\succ_{G}$ on $\G$ such that $\succ_{G}$ refines the partial order on $\G$ induced by $\LL$.

\begin{definition}
    The $\G$-\emph{nested set complex} $\N(\LL,\G)$ is the $\Delta$-complex whose $k$-faces are the $\G$-nested sets of cardinality $k+1$.
    The face maps are defined by  $d_i ((S,x)) = (T,y)$ where $S=\{S_0, S_1, \dots, S_k\}$ are ordered accordingly to $\succ_{\G}$, $T=S \setminus S_i$ and $y$ is the unique element in $\LL_{\leq x}$ such that $y \in \vee T$. 
\end{definition}

%\begin{remark}
    Recall that a $\Delta$-complex is said to be \emph{regular}, if characteristic maps on its geometric realization are homeomorphism between closed euclidean balls (or $n$-simplices) and the closed cells.
%\end{remark}

\begin{definition}
    Let $\N$ be a regular $\Delta$-complex. Its \emph{face poset} $\mathcal{F}(\N)$ is the poset consisting of the set of closed cells of $\N$, ordered by inclusion.
\end{definition}

\begin{definition}
Define $ \Bl_\G \LL$ as the iterated blowup  of elements in $\G$ in any order refining the partial order on $\G$ with the natural projection $\pi=\pi_\G \colon \Bl_\G \LL \to \LL$.
\end{definition}
The poset $\Bl_\G \LL$ does not depend on the choice of the linear order on the building set $\G$, we will prove this fact in \Cref{BlGisFP}.

\begin{proposition}
\label{blowupmaxbuilding}
Let $\LL$ be a locally geometric poset, $\G$ a building set of $\LL$ and $p\in \max(\G)$. Then,
$\tilde{\G}=(\G\setminus\{p\})\cup \{(p,\hat{0})\}$ is a building set of $\Bl_{p}\LL$. Furthermore, the $\tilde{\G}$-nested subsets are exactly the $\G$-nested subsets with $p$ replaced by $(p,\hat{0})$.
\end{proposition}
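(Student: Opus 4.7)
The plan is to prove both assertions by analyzing the lower intervals $[\hat{0}, w]^{\Bl_p \LL}$ for $w \in \max(\Bl_p \LL)$ via the factorization
\[
[\hat{0}, (p,x,y)]^{\Bl_p \LL} \simeq [\hat{0}, x]^\LL \times B_1
\]
established in the proof of \Cref{blowupislocalgeom}, and then reducing both claims to the known properties of the building set $\G_{\leq x}$ in $\LL_{\leq x}$.

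For the building set assertion, I first observe that $p \in \max(\G)$ forces every $g \in \G \setminus \{p\}$ to satisfy $g \not\geq p$, so $\tilde{\G} \subseteq \Bl_p \LL$. The maxima of $\Bl_p \LL$ come in two flavors: elements $x \in \max(\LL)$ with $x \not\geq p$, and triples $(p,x,y)$ which are maximal. In the first case $[\hat{0}, x]^{\Bl_p \LL} = [\hat{0}, x]^\LL$ and $\tilde{\G}_{\leq x} = \G_{\leq x}$ (as $(p,\hat{0})$, being a triple, is never below a non-triple of $\Bl_p \LL$), so the axiom transfers directly from $\G$. In the second case, under the product isomorphism $\tilde{\G}_{\leq (p,x,y)}$ corresponds to $(\G_{\leq x} \times \{0\}) \cup \{(\hat{0}, 1)\}$, the standard product building set. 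I then verify the factorization on each $(z_1, \epsilon) \in [\hat{0}, x] \times B_1$: for $\epsilon = 0$ it is inherited from $\G$; for $(z_1, 1)$ with $z_1 = \hat{0}$ it is trivial; and for $(z_1, 1)$ with $z_1 \neq \hat{0}$ the $\tilde{\G}$-factors are the image of $\max(\G_{\leq z_1})$ together with $(\hat{0}, 1)$ (which is incomparable with every $(g,0)$ for $g \neq \hat{0}$), yielding $[\hat{0}, (z_1, 1)] \simeq \prod_i [\hat{0}, g_i] \times B_1$.

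For the nested set assertion, I construct a bijection between $\G$-nested pairs $(S, x')$ and $\tilde{\G}$-nested pairs. If $p \notin S$, then the standard identity $\max S = \max(\G_{\leq x'})$ combined with the maximality of $p$ in $\G$ yields $p \notin \G_{\leq x'}$, hence $x' \not\geq p$, and $(S, x')$ already lives in $\Bl_p \LL$ unchanged. If $p \in S$, I write $S = S_0 \sqcup \{p\}$, let $z_0$ be the join of $S_0$ in $\LL_{\leq x'}$ (which exists and gives $[\hat{0}, x']^\LL \simeq [\hat{0}, p] \times [\hat{0}, z_0]$ via the building property at $x'$), and send $(S, x')$ to $(S_0 \cup \{(p, \hat{0})\},\, (p, z_0, x'))$. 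The equivalence of nested conditions is then verified inside $[\hat{0}, z_0]^\LL \times B_1 \simeq [\hat{0}, (p,z_0,x')]^{\Bl_p \LL}$: antichains contained in $S_0$ have the same join as before (so the condition reduces exactly to $S_0$ being $\G_{\leq z_0}$-nested), while any antichain containing $(p, \hat{0})$ has join of the form $(\vee T, 1)$, which cannot lie in $\tilde{\G}_{\leq (p,z_0,x')}$ because the only element there with second coordinate $1$ is $(\hat{0}, 1)$ itself.

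The main subtlety I expect is the $p \in S$ case: checking that $w = (p, z_0, x')$ really is an element of $\vee S'$ in $\Bl_p \LL$ and is the correct base point for the bijection. Both are controlled by the product decomposition $[\hat{0}, x']^\LL \simeq [\hat{0}, p] \times [\hat{0}, z_0]$ provided by $\G$ being a building set at $x'$; once this is in place, both directions of the bijection and the equivalence of the nested conditions follow from routine componentwise join computations in the product lattice.
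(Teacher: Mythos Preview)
Your approach is essentially the same as the paper's: both reduce everything to the interval isomorphism $[\hat 0,(p,x,y)]^{\Bl_p\LL}\simeq[\hat 0,x]^{\LL}\times B_1$ from \Cref{blowupislocalgeom} and treat the cases $p\notin S$ and $p\in S$ separately. You work a bit more systematically inside the product, while the paper checks antichains elementwise, but the content is the same.

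One inaccuracy worth fixing: the parenthetical claim that the building decomposition at $x'$ yields $[\hat 0,x']^{\LL}\simeq[\hat 0,p]\times[\hat 0,z_0]$ is not correct in general. The building decomposition gives $[\hat 0,x']\simeq[\hat 0,p]\times[\hat 0,z_0']$ with $z_0'=\bigvee(\max S\setminus\{p\})$; if $S_0=S\setminus\{p\}$ contains some $s<p$ that is maximal in $S_0$, then $z_0=\bigvee S_0>z_0'$ and the two products have different rank. Fortunately you never actually need that decomposition: the facts you use --- that $z_0\ngeq p$, that $x'\in p\vee z_0$, and that $[\hat 0,(p,z_0,x')]^{\Bl_p\LL}\simeq[\hat 0,z_0]\times B_1$ --- all hold independently and can be checked directly. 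You should also make explicit the one-line observation that, for the converse direction, any antichain of $S$ containing $p$ automatically has join $\notin\G$ because $p\in\max(\G)$; this is exactly what lets the $\G$-nested condition on $S$ collapse to the $\G_{\le z_0}$-nested condition on $S_0$, matching your reduction on the $\tilde\G$ side.
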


\begin{proof}
    The proof is analogous to the one in \cite{FeichtnerKozlov2004} adapted to our setting of local lattices.
    First, let us see that $\tilde{\G}$ is a building set for the blowup $\Bl_p\LL$, which is a local lattice by \Cref{blowupislocalgeom}. Let $\xi\in\max\Bl_p\LL$. If $\xi\in\{x\in\LL \mid x\ngeq p\}$ then its $\tilde{\G}_{\leq \xi}$-decomposition in $(\Bl_p\LL)_{\leq \xi}$ is clearly the same as its $\G_{\leq \pi(\xi)}$-decomposition in $\LL_{\leq \pi(\xi)}$. Suppose that $\xi=(p,x,y)$, where $x\ngeq p$ and $y\in x \vee p$ in the poset $\LL$, the following isomorphism of posets holds: $$\left[\hat0, (p,x,y)\right]^{\Bl_p\LL} \simeq [\hat0, x]^{\LL} \times B_1,$$
    where $B_1$ is the poset given by the two elements $\{\hat{0} < (p,\hat0)\}$, hence we are done by the decomposition of $[\hat{0}, x]$ in $\LL$.\\
    Let $(S,x)$ with $x\in\bigvee S$ be a $\G$-nested set in $\LL$, that is, $S$ is $\G_{\leq x}$-nested set in the lattice $\LL_{\leq x}$. We want to see that the $\tilde{\G}$-nested sets of $\Bl_p\LL$ are all and only those of the form $(\tilde{S},\tilde{x})$ where \begin{align*}
        \tilde{S} & :=\begin{cases}
        (S\setminus\{p\})\cup\{(p,\hat0)\}\text{, if } p\in S\\
        S\text{, otherwise.}
        \end{cases}\\
        \tilde{x} &= \begin{cases} ((p,\hat0)\vee\bigvee (S\setminus\{p\}))_{\leq x} \text{, if } p\in S \\ x \text{, otherwise.} \end{cases}
        \end{align*}
    \begin{description}
        \item[Case 1] 
    Suppose $(S,x)$ nested with $p\notin S$. Let $x_1,\ldots, x_t$ be incomparable elements in $S$, for $t\geq 2$. By $\G_{\leq x}$-nestedness, $(\bigvee_{i=1}^t x_i)_{\leq x}$ exists in $\LL_{\leq x}$ and does not belong to $\G_{\leq x}$. Because $p$ is not a factor and it is maximal in $\G$, we have $(\bigvee_{i=1}^t x_i)_{\leq x} \ngeq p$. Thus, the same element exists in $\Bl_p(\LL)_{\leq x}$ and does not belong to $\tilde{\G}$. We conclude that $(S,x)$ is $\tilde{\G}$-nested in $\Bl_p\LL$. Conversely, any $\tilde{\G}$-nested set $(\tilde{S},\tilde{x})$ such that $(\hat0, p)\notin \tilde{S}$ is immediately a $\G$-nested.\\
    \item[Case 2]
    Suppose $(S,x)$ nested with $p\in S$. We want to see that $(\tilde{S}, \tilde{x})$ as above is $\tilde{\G}$-nested. Subsets of incomparable elements not containing $(p,\hat0)$ can be dealt with as in Case 1.
    Let $(p,\hat0),x_1,\ldots, x_t$ be incomparable in $\tilde{S}$. As before, we get that $(\bigvee_{i=1}^t x_i)_{\leq x} \ngeq p$.
    By nestedness of $(S,x)$, the element $(p\vee \bigvee_{i=1}^t x_i)_{\leq x}$ exists in $\LL_{\leq x}$ and does not belong to $\G_{\leq x}$. Therefore, the element
    $$(p,(\bigvee_{i=1}^t x_i)_{\leq x})_{\leq x} = ((p,\hat0)\vee (\bigvee_{i=1}^t x_i))_{\leq x}$$
    exists in $(\Bl_p\LL)_{\leq\tilde{x}}$ and does not belong to $\tilde{\G}_{\leq\tilde{x}}$. We deduce that $\tilde S$ is $\tilde{\G}_{\leq\tilde{x}}$-nested, and hence  $(\tilde{S},\tilde{x})$ is $\tilde{\G}$-nested.
    An analogous argument shows that if $(\tilde{S},\tilde{x})$ is $\tilde{\G}$-nested, then $(S,x)$ is $\G$-nested \qedhere
    \end{description}
\end{proof}

\begin{proposition}
\label{BlGisFP}
Let $\G = \{G_1,\dots, G_t\}$ be a building set for $\LL$ ordered such that $G_i > G_j$, if $i<j$, that is, ordered by a linear refinement of the opposite partial order on $\LL$. Then: 
\[\Bl_\G(\LL)=\Bl_{G_t} (\Bl_{G_{t-1}} (\dots \Bl_{G_1}(\LL) \dots )) \simeq \mathcal{F}(\N(\LL,\G)).\]
\end{proposition}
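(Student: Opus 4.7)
The plan is to establish the two claims — order-independence of the iterated blowup, and its identification with $\mathcal{F}(\N(\LL,\G))$ — via explicit combinatorial descriptions.

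For order-independence, I would verify that blowups at incomparable $p, q \in \G$ commute: $\Bl_p \Bl_q \LL \cong \Bl_q \Bl_p \LL$. Unpacking the blowup definition, every element of either iterated blowup falls into one of four types — an old element not above $p$ or $q$, a one-level tower above only $p$, a one-level tower above only $q$, or a two-level tower $(p,(q,x,y_q),y_p)$ respectively $(q,(p,x,y_p),y_q)$. These types match across the two orders, and the comparison rules, depending only on the underlying data $(x,y_p,y_q)$, make the matching order-preserving. Since any two linear refinements of the partial order on $\G$ differ by finitely many transpositions of adjacent incomparable pairs, this proves $\Bl_\G\LL$ is well-defined.

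For the face-poset identification, I would define an explicit bijection $\phi \colon \Bl_\G \LL \to \mathcal{F}(\N(\LL,\G))$ by tower inspection. By the blowup construction, every $\xi \in \Bl_\G\LL$ has a unique presentation as a tower $\xi = (G_{i_1},(G_{i_2},\dots,(G_{i_k},x_0,y_k)\dots),y_1)$ with $G_{i_1}\succ_\G\dots\succ_\G G_{i_k}$, and I set $\phi(\xi) := (\{G_{i_1},\dots,G_{i_k}\},\pi_\G(\xi))$. Iterated use of \Cref{blowupmaxbuilding} — at each tower level the outer element corresponds to a nested element relative to the transformed building set at the intermediate stage — shows that $\phi(\xi)$ is a $\G$-nested pair. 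Injectivity is immediate from uniqueness of the tower decomposition; for surjectivity, given $(S,x)$ with $S = \{G_{j_1}\succ_\G\dots\succ_\G G_{j_k}\}$, one builds $\xi$ level by level, using the building-set decomposition of $[\hat 0, x]^\LL$ to choose $x_0$ and each intermediate $y_\ell$.

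The main obstacle is verifying that $\phi$ is a poset isomorphism. Comparisons in $\Bl_\G\LL$ reduce recursively through the blowup rules to comparisons of tower components, while on the face-poset side $(T,y)\le(S,x)$ amounts to $T\subseteq S$ together with $y = \bigvee T$ computed in $\LL_{\le x}$. Matching these requires a careful case analysis by tower level — handling old-vs-new and new-vs-new comparisons at each blowup stage — together with an appeal to the building-set decomposition of lower intervals; the recursive nature of the blowup comparison rules makes this the most delicate step of the argument.
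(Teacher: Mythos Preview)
Your proposal is essentially correct and would go through, but it takes a noticeably different route from the paper. The paper does not prove order-independence directly at all; instead it argues structurally: iterating \Cref{blowupmaxbuilding}, the transforms $\tilde{\G}=\{(G,\hat0)\mid G\in\G\}$ form a building set of $\Bl_\G\LL$ consisting precisely of its atoms, so every lower interval decomposes as a product of rank-$1$ posets. This immediately gives the locally boolean structure, and together with the nested-set bijection of \Cref{blowupmaxbuilding} identifies $\Bl_\G\LL$ with $\mathcal{F}(\N(\LL,\G))$. Order-independence then follows for free, since the target is intrinsically defined. By contrast, you establish order-independence first (via commutativity of blowups at incomparable elements) and then build an explicit bijection by tower inspection; this is more hands-on and makes the correspondence very concrete, at the cost of the delicate case analysis you flag for the order-preserving check. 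The paper's argument is shorter and avoids that bookkeeping entirely by leaning on the building-set decomposition, but your approach has the advantage of giving an explicit element-level dictionary.

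One small slip: in your tower presentation the ordering is reversed. Since $G_1$ (the $\LL$-largest) is blown up first and $G_t$ last, the \emph{outermost} label in a tower is the last one blown up, hence the $\succ_\G$-\emph{smallest}. So the tower $(G_{i_1},(G_{i_2},\dots,(G_{i_k},x_0,y_k)\dots),y_1)$ satisfies $G_{i_1}\prec_\G\cdots\prec_\G G_{i_k}$, not $\succ_\G$. This does not affect the validity of the plan, but you should correct it before carrying out the explicit construction.
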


\begin{proof}
Again, the proof is adapted from \cite{FeichtnerKozlov2004}. By \cref{blowupmaxbuilding}, the set $$\tilde{\G}=\{(G,\hat0)\colon G\in\G\}$$ of the transforms of the elements of $\G$ is a building set for $\Bl_\G\LL$.
By construction, it is also the set of atoms. Consider an element $x\in\Bl_\G(\LL)\setminus\{\hat0 \}$.
By \Cref{blowupmaxbuilding}, $\tilde{\G}_{\leq x}$ is $\tilde{\G}_{\leq x}$ -nested in $(\Bl_{\G}\LL)_{\leq x}$, and therefore $(\tilde{\G}_{\leq x},x)$ is $\tilde{\G}$-nested. 
Since the interval $[\hat0, x]=(\Bl_\G\LL)_{\leq x}$ decomposes as product of rank $1$ posets of the form $\{\hat0<(G,\hat0)\}$ for $G$ belonging to some nested set, we conclude that $\Bl_\G\LL$ is the face poset of the nested set complex.
\end{proof}

\begin{comment}
Again, the proof is adapted from \cite{FeichtnerKozlov2004}. With the chosen linear order on the building set we can apply \Cref{blowupmaxbuilding}. We obtain that the transforms of the elements of $\G$ (which we identify with $\G$) form a building set in $\Bl_\G(\LL)$ and are by construction the atoms of the blown up poset. Let $x\in\Bl_\G(\LL)\setminus\{\hat0 \}$: by \Cref{blowupmaxbuilding}, $\G_{\leq x}$ is $\G_{\leq x}$-nested in $(\Bl_{\G}\LL)_{\leq x}$, and therefore $(\G_{\leq x},x)$ is $\G$-nested. Because the elements of $\G$ are the atoms of $\Bl_{\G}(\LL)$, the lower intervals $(\Bl_{\G}\LL)_{\leq x}$ are product of rank $1$ boolean lattices of the form $\{\hat0 < G\}$ for some set of $\G$-nested elements of $\G$. We conclude that $\Bl_\G\LL$ is the face poset of the nested set complex.
\end{comment}

\begin{remark}\label{remark:BlG}
Every element of $ \Bl_\G \LL$ corresponds to a nested set of $\G$.
The atoms are exactly the $\G$-nested sets $(\{G\},G)$ of cardinality one.
\end{remark}

\begin{corollary}
    If $\G$ is a building set, then $\Bl_\G \LL$ is locally boolean.
\end{corollary}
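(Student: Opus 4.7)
The plan is to read off the corollary directly from the structural description of $\Bl_\G\LL$ obtained in the proof of \Cref{BlGisFP}. Recall from that proof that after performing the iterated blowup, the set $\tilde{\G}=\{(G,\hat{0})\mid G\in\G\}$ is a building set for $\Bl_\G\LL$ and coincides with its set of atoms.

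To prove the corollary, I would fix an arbitrary $x\in \Bl_\G\LL$ and analyze the lower interval $[\hat{0},x]$. By \Cref{blowupmaxbuilding} (applied iteratively, as in the proof of \Cref{BlGisFP}), the pair $(\tilde{\G}_{\leq x}, x)$ is a $\tilde{\G}$-nested set and the factors of $x$ are exactly the atoms of $\tilde{\G}_{\leq x}$. Combining this with the product decomposition established in the proof of \Cref{BlGisFP}, one gets an isomorphism of posets
\[
[\hat{0},x]^{\Bl_\G\LL} \;\simeq\; \prod_{G\in F(\tilde{\G},x)} \{\hat{0} < (G,\hat{0})\},
\]
i.e.\ a product of two-element chains, indexed by the $\tilde{\G}$-factors of $x$.

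Since a finite product of two-element chains is (by definition) the boolean lattice on the indexing set, every lower interval of $\Bl_\G\LL$ is boolean. Hence $\Bl_\G\LL$ is locally boolean, as claimed.

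There is no real obstacle here: the entire content is already encoded in \Cref{blowupmaxbuilding} and \Cref{BlGisFP}, and one only needs to notice that the rank-one factors appearing in the product decomposition are precisely two-element chains, which is the defining feature of a boolean interval.
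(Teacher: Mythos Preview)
Your proof is correct and essentially follows the paper's approach: both rely on \Cref{BlGisFP}. The paper's proof is the one-liner ``the face poset of a regular $\Delta$-complex is locally boolean,'' invoking the identification $\Bl_\G\LL\simeq\mathcal{F}(\N(\LL,\G))$ together with the general fact about face posets, whereas you extract the explicit product decomposition into two-element chains directly from the proof of \Cref{BlGisFP}; the content is the same.
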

\begin{proof}
    The face poset of a a regular $\Delta$-complex is locally boolean.
\end{proof}

%\begin{corollary}
%    If $g \in \Bl_{\G}(\LL)$ is an atom and $a \in \Bl_{\G}(\LL)$ such that $\pi(g) > \pi(a)$.
%    Then the two elements have a unique join, i.e.\ $\lvert a \vee g \rvert =1$.
%\end{corollary}
%\begin{proof} \todo{sistemare la dimostrazione! o rimuovere il corollario.}
%    We have $\pi(a \vee g)= \pi(a) \vee \pi(g) = \pi(g)$. 
%    In particular $a\vee g$ is a singleton.
%\end{proof}

%\begin{remark}
    If the building set $\G$ is well-connected then $\Bl_\G \LL$ is a semilattice.
%\end{remark}

\begin{lemma} \label{lemma:building_in_contr}
    Let $X \in\LL\setminus\{\hat0\}$ and $\G$ be a building set. Then, the set 
    \[ \G_X := \{Z \in \LL \mid \exists \,  G \in \G_{\not \leq X} \textnormal{ such that } Z \in G \vee X\} \]
    is a building set for $\LL_{\geq X}$.
    Moreover, let $x\in\Bl_{\G}(\LL)$ be a maximal element such that $\pi(x)=X$, that is, $x=(S,X)$ is $\G$-nested with $S$ maximal, then $$\Bl_\G(\LL)_{\geq (S,X)}\simeq \Bl_{\G_X}(\LL_{\geq X}).$$
\end{lemma}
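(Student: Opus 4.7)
For the first part, fix a maximal element $y\in \LL$ with $y\geq X$. Since $\G$ is a building set, the $\G$-factors $F(\G,y)=\{y_1,\ldots,y_k\}$ yield the isomorphism $[\hat{0},y]\simeq \prod_{j=1}^k [\hat{0},y_j]$. Writing $X_j := X\wedge y_j$, the element $X\leq y$ corresponds to $(X_1,\ldots,X_k)$, so $[X,y]\simeq \prod_{j=1}^k [X_j,y_j]$. Since the factors pairwise meet at $\hat{0}$, any $G\in \G$ with $G\leq y$ lies below a unique factor $y_{j(G)}$; if moreover $G\not\leq X$, then the elements of $G\vee X$ that are below $y$ correspond to tuples equal to $X_i$ in coordinates $i\neq j(G)$ and to some element of $G\vee X_{j(G)}$ in coordinate $j(G)$. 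This identifies the maxima of $(\G_X)_{\leq y}$ with the joins $X\vee y_j$ for the indices $j$ satisfying $y_j\not\leq X$ (taking $G=y_j$) and shows that they decompose $[X,y]$ through the corresponding non-trivial product factors. Applying the same argument to each $y'\in [X,y]$ in place of $y$ yields the building-set axiom for $\G_X$ in the local lattice $\LL_{\geq X}$.

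For the second part, by \Cref{BlGisFP} both sides are face posets of nested set complexes, so an element of $\Bl_\G(\LL)_{\geq(S,X)}$ is a $\G$-nested pair $(S',X')$ with $S\subseteq S'$. Maximality of $(S,X)$ in $\pi^{-1}(X)$ forces $S'\setminus S\subseteq \G_{\not\leq X}$: otherwise an element $G\in \G_{\leq X}\setminus S$ would satisfy $X\vee G=X$ in the lattice $[\hat{0},X]$, giving a strictly larger $\G$-nested pair $(S\cup\{G\},X)\in \pi^{-1}(X)$ and contradicting maximality. I then define
\[
\Phi\colon \Bl_\G(\LL)_{\geq(S,X)}\longrightarrow \Bl_{\G_X}(\LL_{\geq X}),\qquad (S',X')\longmapsto \bigl(\{(G\vee X)_{\leq X'}: G\in S'\setminus S\},\, X'\bigr),
\]
where $(G\vee X)_{\leq X'}$ denotes the unique join of $G$ and $X$ in the lattice $[\hat{0}, X']$. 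Using the first part with $X'$ in place of $y$, the image is a $\G_X$-nested pair, and $\Phi$ is order-preserving because $(G\vee X)_{\leq X'}$ is insensitive to enlarging $X'$ (uniqueness of joins in $[\hat{0}, X'']$ for $X'\leq X''$).

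The main obstacle is to prove that $\Phi$ is a bijection. For injectivity, an equality $(G_1\vee X)_{\leq X'}=(G_2\vee X)_{\leq X'}$ with distinct $G_1,G_2\in S'\setminus S$ forces, via the product decomposition of $[\hat{0}, X']$, both $G_i$ to lie below the same $\G$-factor $x'_j$ of $X'$ and to satisfy $G_1\vee X_j=G_2\vee X_j$ in $[\hat{0}, x'_j]$; I expect the $\G$-nestedness of $S'\cap \G_{\leq x'_j}$ together with the maximality of $S\cap \G_{\leq x'_j}$ inside $[\hat{0}, X_j]$ to preclude such configurations. For surjectivity, given a $\G_X$-nested pair $(T,X')$, each $Z\in T$ corresponds under the product decomposition to an index $j$ and an element $W_j\in [X_j, x'_j]$ of the form $G\vee X_j$ for some $G\in \G_{\leq x'_j}\setminus\G_{\leq X_j}$; assembling these $G$'s into a subset $U\subseteq \G_{\not\leq X}$ and checking, coordinate by coordinate, that $(S\cup U,X')$ is $\G$-nested produces a preimage of $(T,X')$ under $\Phi$, completing the argument.
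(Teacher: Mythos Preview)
Your overall strategy matches the paper's: for the first part you use the $\G$-factor decomposition of $[\hat0,\xi]$ and read off $[X,\xi]$ coordinatewise (the paper does exactly this, writing $F'=F(\G,\xi)\setminus F(\G,X)$ and identifying $\max(\G_X)_{\leq\xi}$ with $\{\eta\vee X:\eta\in F'\}$); for the second part you invoke \Cref{BlGisFP} and define the same forward map, which the paper calls $f$.

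The genuine gap is in the bijectivity of $\Phi$. Your injectivity argument is only a hope (``I expect\ldots''), and it targets the wrong statement: showing that distinct $G_1,G_2$ \emph{in the same} $S'\setminus S$ have distinct images under $G\mapsto(G\vee X)_{\leq X'}$ does not rule out two \emph{different} nested pairs $(S'_1,X')\neq(S'_2,X')$ producing the same image set. Likewise, your surjectivity construction picks ``some $G$'' without a canonical rule, so it is not clear that the resulting $S\cup U$ is $\G$-nested or that $\Phi(S\cup U,X')=(T,X')$. The paper resolves both issues at once with one observation: for each $r$ in a $\G_X$-nested set $(R,Y)$, the set $\{s\in\G\mid r\in(s\vee X)_{\leq Y}\}$ has a \emph{unique} maximal element $r_X$, namely the sole element of $F(\G_{\leq Y},r)\setminus F(\G_{\leq Y},X)$. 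This yields an explicit inverse $g\colon(R,Y)\mapsto(S\cup\{r_X:r\in R\},Y)$, and the verifications $fg=\mathrm{id}$ and $gf=\mathrm{id}$ reduce to the identities $(r_X\vee X)_{\leq Y}=r$ and $((t\vee X)_{\leq Y})_X=t$ for $t\in T\setminus S$. This uniqueness statement is exactly the missing ingredient in your plan.
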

\begin{proof}

    We first show that $\G_X$ is building set for $\LL_{\geq X}$. Consider $\xi\in\max\LL_{\geq X}$, and let $F:=F(\G,\xi)$ be the set of $\G$-factors of $\xi$ in $\LL$, then we have the decomposition in the lattice $\LL_{\leq \xi}$: $$[\hat0, \xi] \simeq \prod_{\eta\in F}[\hat0, \eta].$$
In the product, denote with $\overline{X}=(X_{\eta})_{\eta\in F}$ the element corresponding to $X$. Let also $F':=F\setminus F(\G,X)$, and $F'' := \max(\G_X)_{\leq \xi}$. We want to see that
$$[X, \xi] \simeq \prod_{\zeta\in F''}[X, \zeta].$$ We have the following isomorphism of posets:
    \begin{align*}
        [X,\xi] &= [\hat0,\xi]_{\geq X}\\
        &\simeq \left(\prod_{\eta\in F}[\hat0,\eta]\right)_{\geq \overline{X}}\\
        &\simeq \prod_{\eta\in F'}[X_{\eta}, \eta]\\
        &\simeq \prod_{\eta\in F'}[X, \eta\vee X]\\
        &\simeq \prod_{\zeta\in F''}[X,\zeta].
    \end{align*}
 In the last isomorphism we identify the joins $\eta\vee X$ with the $\G_X$-factors $\zeta$ of $\xi$.
 Indeed, by definition $\eta\vee X\in (\G_X)_{\leq \xi}$ for $\eta\in F'$, and their join equals $\xi$. On the other hand, if one of these elements was not maximal, then the corresponding $\G$-factor $\eta$ of $\xi$ in $\LL_{\leq\xi}$ would not be maximal, which is absurd.
 
    Now, we show that the two blowup posets are isomorphic: by \Cref{BlGisFP} it is enough to construct a bijection between $\G$-nested sets bigger than $(S,X)$ in $\Bl_\G(\LL)$, and $\G_X$-nested sets in $\LL_{\geq X}$.
    Let $(T,Y)>(S,X)$ be $\G$-nested. We claim that the map $f \colon \Bl_{\G}(\LL)_{\geq (S,X)} \rightarrow \Bl_{\G_X}(\LL_{\geq X})$ sending $(T,Y)$ to the $\G_X$-nested $(T^X, Y)$ with $T^X = \{(t\vee X)_{\leq Y}\mid t \in T\setminus \G_{\leq X} \}$ is the sought bijection. We want to contruct an inverse to this map.
    Consider the $\G_{X}$-nested set $(R,Y)$ and notice that for $r\in R$, the set $\{s\in \G \mid r\in(s\vee X)_{\leq Y}\}$ has a unique maximal element, which we call $r_X$. Indeed, $r_X$ is the only element in $F(\G_{\leq Y},r)\setminus F(\G_{\leq Y},X)$.
    The inverse map $g \colon \Bl_{\G_X}(\LL_{\geq X})\rightarrow \Bl_{\G}(\LL)_{\geq (S,X)}$ sends $(R,Y)$ to $(R_X,Y)$ with $R_X = S \cup \{r_X\mid r\in R\}$.
    It is clear that $fg$ is the identity on $\Bl_{\G_X}(\LL_{\geq X})$, as $(r_X\vee X)_{\leq Y}=r$. Conversely, by maximality of $S$, we have $T\setminus \G_{\leq X} = T\setminus S$, and we conclude that $gf$ is the identity by $((t\vee X)_{\leq Y})_{X} = t$.
\begin{comment} sends the $\G_X$-nested $(T,B)$ to the $\G$-nested $(T\wedge X, B)$, where $T\wedge X = \{t'\mid t'\sqcup F(\G,X) =  F(\G,t)\} \sqcup F(\G,X)$.
\end{comment}
\begin{comment} then, $S$ contains $F(\G,a)$, 
    and the following is the bijection we were looking for:
$$(S,x) \mapsto \left(\{\eta\vee a \mid \eta \in S\setminus F(\G,a)\},x\right).$$
\end{comment}
\end{proof}

\begin{remark}
    For the maximal building set $\G=\LL\setminus\{\hat0\}$, the poset $\Bl_\G(\LL)_{\geq (S,X)}$ is the face poset of the link of the simplex corresponding to $X$ in the nested set complex $\mathcal{N}(\LL,\G)$. 
\end{remark}

\begin{example}
\label{running}
    Consider the toric arrangement $\AAA$ in $T\simeq (\CC^*)^3$ with coordinates $x,y,z$ given by the three tori:
    \begin{align*}
    a &:  x = 1, \\
    b &:  x = y^3, \\
    c &:  \begin{cases} x = z \\ x^2=y^3 \end{cases}.
    \end{align*}
    This arrangement has 10 layers and a minimal building set with 6 layers.
    The poset of layers $\LL = \LL(\AAA)$ is represented in \Cref{fig:poset_layer_es}, the element of the minimal building set $\G = \G_{\min}$ are draw in blue.
\begin{figure}
    \centering
\begin{tikzpicture}[scale=0.7]
\node (min) at (0,-1) {$\hat0$};
\node [text = blue] (A) at (-4,0) {$a$};
\node [text = blue] (B) at (0,0) {$b$};
\node [text = blue] (C) at (6,2) {$c$};

\node (L1) at (-5,2) {$L_1$};
\node (L2) at (-2,2) {$L_2$};
\node (L3) at (1,2) {$L_3$};

\node[text = blue] (P1) at (-2,4) {$P_1$};
\node[text = blue] (P2) at (0,4) {$P_2$};
\node[text = blue] (P3) at (2,4) {$P_3$};

\draw (min) -- (A);
\draw (min) -- (B);
\draw (min) -- (C);

\draw (A) -- (L1) -- (B) -- (L2) -- (A) -- (L3) -- (B) -- (L2);

\draw (L1) -- (P1) -- (C) -- (P2) -- (L2);
\draw (C) -- (P3) -- (L3);

\end{tikzpicture}
    \caption{Poset of layers of the toric arrangement $\AAA$.}
    \label{fig:poset_layer_es}
\end{figure}
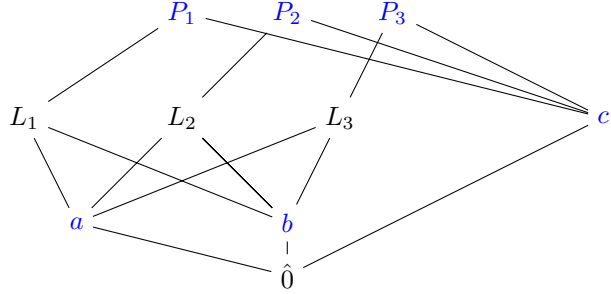

The minimal well-connected building set coincides with the maximal building set.
Moreover, the nested set complex $n(\LL,\G)$ is not a simplicial complex: indeed, nested sets $(\{a,b\},L_i)$ for $i=1,2,3$ are parallel edges between the vertices $a$ and $b$. We also depict the face poset of the nested set complex (see \Cref{fig:NestedFacePoset}), which coincides with the blown up poset $\Bl_\G \LL$ after a proper relabeling.

\begin{figure}
    \centering
    \begin{tikzpicture}[scale = 1]
    \node (0) at (-1,0) {$\scriptstyle \emptyset$};
    \node (A) at (0,1) {$\scriptstyle a$};
    \node (B) at (2,1) {$\scriptstyle b$};
    \node (P1) at (-6,1) {$\scriptstyle P_1$};
    \node (P2) at (-4,1) {$\scriptstyle P_2$};
    \node (P3) at (-2,1) {$\scriptstyle P_3$};
    \node (C) at (4,1) {$\scriptstyle c$};

    \node (P1A) at (-7,3) {$\scriptstyle \{a,P_1\}$};
    \node (P1B) at (-6,3) {$\scriptstyle \{b,P_1\}$};
    \node (P1C) at (-5,3) {$\scriptstyle \{c,P_1\}$};

    \node (P2A) at (-4,3) {$\scriptstyle \{a,P_2\}$};
    \node (P2B) at (-3,3) {$\scriptstyle \{b,P_2\}$};
    \node (P2C) at (-2,3) {$\scriptstyle \{c,P_2\}$};

    \node (P3A) at (-1,3) {$\scriptstyle \{a,P_3\}$};
    \node (P3B) at (0,3) {$\scriptstyle \{b,P_3\}$};
    \node (P3C) at (1,3) {$\scriptstyle \{c,P_3\}$};

    \node (ABL1) at (2.2,3) {$\scriptstyle (\{a,b\},L_1)$};
    \node (ABL2) at (3.6,3) {$\scriptstyle (\{a,b\},L_2)$};
    \node (ABL3) at (5,3) {$\scriptstyle (\{a,b\},L_3)$};

    \node (ABP1) at (-3,4) {$\scriptstyle \{a,b,P_1\}$};
    \node (ABP2) at (-1,4) {$\scriptstyle \{a,b,P_2\}$};
    \node (ABP3) at (1,4) {$\scriptstyle \{a,b,P_3\}$};

    \draw (P1A) -- (P1) -- (P1B);
    \draw (P2A) -- (P2) -- (P2B);
    \draw (P3A) -- (P3) -- (P3B);
    \draw (0) -- (P1) -- (P1C) -- (C) -- (P2C) -- (P2) -- (0);
    \draw (0) -- (C) -- (P3C) -- (P3) -- (0);
    \draw (0) -- (A) -- (ABL1) -- (B) -- (ABL3) -- (A) -- (ABL2) -- (B);
    \draw (P1A) -- (A) -- (P2A);
    \draw (A) -- (P3A);
    \draw (P1B) -- (B) -- (P2B);
    \draw (0) -- (B) -- (P3B);

    \draw (P1A) -- (ABP1) -- (P1B);
    \draw (P2A) -- (ABP2) -- (P2B);
    \draw (P3A) -- (ABP3) -- (P3B);
    \draw (ABL1) -- (ABP1);
    \draw (ABL2) -- (ABP2);
    \draw (ABL3) -- (ABP3);

    \end{tikzpicture}
    \caption{Face poset of the complex $n(\LL,\G)$.}
    \label{fig:NestedFacePoset}
\end{figure}
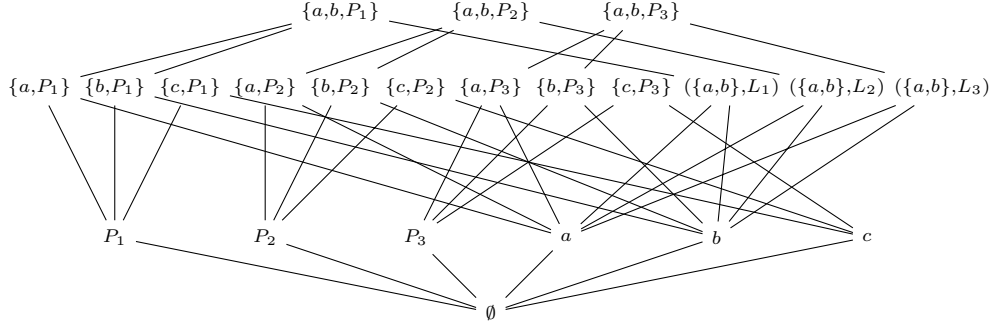

\begin{figure}
    \centering
    \begin{tikzpicture}
        \coordinate (c) at (-1.73,0);
        \coordinate (P1) at (0.1,0.8);
        \coordinate (P2) at (0,0);
        \coordinate (P3) at (0.1,-0.8);
        \coordinate (a) at (1.73,1);
        \coordinate (b) at (1.73,-1);
        
        \fill[fill=red!50, opacity=0.5] (P1) -- (a) to[out=-60,in=60] (b) -- cycle;
        \fill[fill=blue!50, opacity=0.5] (P2) -- (a) -- (b) -- cycle;
        \fill[fill=green!50, opacity=0.5] (P3) -- (a) to[out=-120,in=120] (b) -- cycle;
        
        \draw[opacity=0.5] (c) -- (P1);
        \draw[opacity=0.5] (c) -- (P2);
        \draw[opacity=0.5] (c) -- (P3);

        \draw (P1) -- (a) to[out=-60,in=60] (b) -- cycle;
        \draw (P2) -- (a) -- (b) -- cycle;
        \draw (P3) -- (a) to[out=-120,in=120] (b) -- cycle;
        
        \filldraw[black] (c) circle (1pt) node[anchor=east]{$c$};
        \filldraw[black] (P1) circle (1pt) node[anchor=south east]{$P_1$};
        \filldraw[black] (P2) circle (1pt) node[anchor=south east]{$P_2$};
        \filldraw[black] (P3) circle (1pt) node[anchor=north east]{$P_3$};
        \filldraw[black] (a) circle (1pt) node[anchor=west]{$a$};
        \filldraw[black] (b) circle (1pt) node[anchor=west]{$b$};
    \end{tikzpicture}
    \caption{A realization of the delta complex $n(\LL,\G)$.}
    \label{fig:DeltaComplex}
\end{figure}
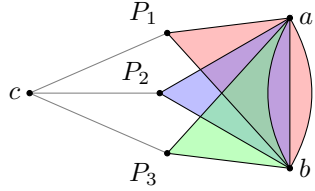

\end{example}

\section{Generalities on Blowups}
\label{sec:geometric_blowups}
We provide a generalization of the lemma by Keel about cohomology of blowups, applicable on the study of the cohomology of wonderful models without the hypothesis of well-connectedness of the building set.

Following \cite{DeConciniProcesi95,DeConciniGaiffi2019}, we define the Chern polynomial 
\[c_E(t)= \sum_{i=0}^d c_i(E)t^{d-i} \in H^*(X)[t],\]
where $c_i(E)$ is the $i$-th Chern class of the vector bundle $E$ of rank $d$ over $X$.
This convention is the opposite of the one used in literature, e.g.\ \cite{FultonIntersectionTheory}.

Let $\iota \colon Z\hookrightarrow Y$ be regular embedding of varieties, and consider the blowup $\Bl_Z(Y)$ of $Y$ along $Z$ and denote with $\pi \colon \Bl_Z(Y)\rightarrow Y$ the projection map. Let $E$ be the exceptional divisor, and $[Z]$ the fundamental class of $Z$ in $H^*(Y)$.
\begin{theorem}[Theorem 1 in the Appendix of \cite{Keel92}]
\label{thm:cohomology:blowup}
    Suppose that the induced map in cohomology $\iota^* \colon H^{\ast}(Y)\rightarrow H^{\ast}(Z)$ is surjective with kernel $J$. Then, the map induced by $\pi^*$ and  defined by $-t\mapsto [E]$ yields an isomorphism
    $$H^{\ast}(\Bl_Z(Y)) \simeq \faktor{H^{\ast}(Y)[t]}{\left(P(t),t\cdot J\right)},$$
    where $P(t) \in H^{\ast}(Y)[t]$ is any polynomial such that:
    \begin{itemize}
    \item its constant term is the cohomology class $[Z]$;
    \item 
 its restriction to $H^{\ast}(Z)$ is the Chern polynomial of $N_Z Y$, the normal bundle to $Z$ in $Y$.
    \end{itemize}
\end{theorem}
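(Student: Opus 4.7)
The plan is to exhibit a candidate ring homomorphism
\[
\Phi \colon \faktor{H^{\ast}(Y)[t]}{(P(t), t\cdot J)} \longrightarrow H^{\ast}(\Bl_Z(Y))
\]
defined by $\pi^{\ast}$ on $H^{\ast}(Y)$ and $t \mapsto -[E]$, and to prove it is an isomorphism in three stages: well-definedness, surjectivity, and injectivity by a graded rank count.

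First I would check that the defining relations are sent to zero. Write $i \colon E \hookrightarrow \Bl_Z(Y)$ for the exceptional inclusion and $p \colon E = \PP(N_Z Y) \to Z$ for the tautological projective bundle. For any $j \in J = \ker \iota^{\ast}$, the projection formula combined with the base-change identity $i^{\ast}\pi^{\ast} = p^{\ast}\iota^{\ast}$ gives
\[
[E]\cdot \pi^{\ast}(j) \;=\; i_{\ast}\bigl(i^{\ast}\pi^{\ast}(j)\bigr) \;=\; i_{\ast}\bigl(p^{\ast}\iota^{\ast}(j)\bigr) \;=\; 0,
\]
handling the $t\cdot J$ relations. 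For $P(-[E])$, set $\xi := -i^{\ast}[E] = c_1(\mathcal{O}_{\PP(N_Z Y)}(1))$; because the coefficients of $P(t)$ restrict on $Z$ to the Chern classes of $N_Z Y$, the class $i^{\ast}P(-[E])$ equals the Grothendieck projective bundle relation $\sum_k p^{\ast}c_k(N_Z Y)\xi^{d-k} = 0$. Promoting $i^{\ast}P(-[E]) = 0$ to the actual vanishing $P(-[E]) = 0$ in $H^{\ast}(\Bl_Z(Y))$ uses the excess intersection identity for the non-transverse Cartesian square $E \to \Bl_Z(Y) \leftarrow Y \supseteq Z$, which rewrites $\pi^{\ast}[Z] = i_{\ast}\bigl(c_{d-1}(p^{\ast}N_Z Y / \mathcal{O}(-1))\bigr)$ and lets one collapse the constant term of $P(-[E])$ against the remaining terms, all of which are supported on $E$.

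Next, for surjectivity, I would invoke the standard additive decomposition $H^{\ast}(\Bl_Z(Y)) = \pi^{\ast}H^{\ast}(Y) + i_{\ast}H^{\ast}(E)$ coming from the Gysin long exact sequence of $E \subset \Bl_Z(Y)$, together with the projective bundle formula $H^{\ast}(E) = H^{\ast}(Z)\langle 1, \xi, \dots, \xi^{d-1}\rangle$. This is exactly where the surjectivity hypothesis on $\iota^{\ast}$ is used: for each basis element $p^{\ast}(\alpha)\xi^k$ I can choose a lift $\tilde\alpha \in H^{\ast}(Y)$ with $\iota^{\ast}\tilde\alpha = \alpha$, and the projection formula rewrites
\[
i_{\ast}\bigl(p^{\ast}(\alpha)\xi^k\bigr) \;=\; (-1)^k \pi^{\ast}(\tilde\alpha)\cdot [E]^{k+1},
\]
which lies in the image of $\Phi$. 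Iterating shows that every class in $H^{\ast}(\Bl_Z(Y))$ is polynomial in $[E]$ with coefficients in $\pi^{\ast}H^{\ast}(Y)$.

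For injectivity I would do a graded rank comparison. Monicity of $P(t)$ of degree $d$ uniquely reduces each element of the source to $\sum_{k=0}^{d-1} a_k t^k$ with $a_k \in H^{\ast}(Y)$, and the $t\cdot J$ relations then force $a_k \in H^{\ast}(Y)/J \cong H^{\ast}(Z)$ for $k \geq 1$. Hence the source is additively isomorphic to $H^{\ast}(Y)\oplus \bigoplus_{k=1}^{d-1} H^{\ast-2k}(Z)$, which matches the classical additive blowup decomposition of $H^{\ast}(\Bl_Z(Y))$ via Leray--Hirsch on $E$. A surjection between torsion-free abelian groups of equal finite graded ranks is an isomorphism. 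The hard part will be the well-definedness check for $P(-[E])$ in the first step: the restriction to $E$ only recovers the projective bundle relation, so lifting the vanishing to all of $\Bl_Z(Y)$ is precisely where the specific choice of constant term $[Z]$ enters essentially, and where the excess intersection identity for the non-transverse square has to be put to work carefully.
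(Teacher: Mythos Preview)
The paper does not itself prove this theorem---it is quoted from Keel---but it does prove the generalization \Cref{thm:blowup}, whose argument it describes as analogous to Keel's. Your proposal is correct and matches that approach: both verify the $t\cdot J$ relation via the projection formula together with $i^{\ast}\pi^{\ast}=p^{\ast}\iota^{\ast}$, both establish the vanishing of the Chern-polynomial relation by rewriting the constant term $\pi^{\ast}[Z]$ through the excess intersection formula $\pi^{\ast}\iota_{\ast}(1)=i_{\ast}\bigl(c_{d-1}(p^{\ast}N_ZY/\mathcal{O}_E(-1))\bigr)$ and collapsing against the remaining terms supported on $E$, and both conclude via the additive blowup decomposition $H^{\ast}(\Bl_Z Y)\cong H^{\ast}(Y)\oplus\bigoplus_{k=1}^{d-1}H^{\ast-2k}(Z)$, which in the paper is invoked at the outset (eq.~\eqref{eq:additive_cohomology}) to get surjectivity and the rank match in one stroke, whereas you separate these into two steps.
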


We will generalize the above theorem to the case of non-surjective maps $\iota^*$.
Consider the algebra $H^*(Y) \otimes_\ZZ H^*(Z)[t]$ and its subalgebra $A$ generated by $H^*(Y) \otimes 1$ and $1\otimes H^*(Z)t$.
Let $P_z(t) \in A$ be the element 
%\[P(t)=1\otimes t^d + 1 \otimes c_1(N_Z Y) t^{d-1}+ \dots + 1 \otimes c_{d-1}(N_Z Y)t + [Z] \otimes 1 .\]
\begin{equation} \label{eq:Chern_poly}
P_z(t)=1\otimes z t^d + 1 \otimes z c_1(N_Z Y) t^{d-1}+ \dots + 1 \otimes z c_{d-1}(N_Z Y)t + \iota_*(z) \otimes 1 
\end{equation}
for all $z \in H^*(Z)$.
\begin{theorem}\label{thm:blowup}
    Let $\iota\colon Z\hookrightarrow Y$ be a regular embedding, the map 
    %induced by $\pi^*$ obtained by sending $-t$ to $[E]$ 
    defined by $y \otimes 1 \mapsto \pi^*(y)$ and by $1 \otimes zt^k \mapsto -j_*p^*(z) \cdot (-[E])^{k-1}$
    %\pi^*\iota_*(z) \cup (-[E])^{k-1}$
    yields an isomorphism
    \[H^*(\Bl_Z(Y)) \simeq \faktor{A}{(P_z(t), y \otimes t- 1 \otimes \iota^*(y)t)}\]
\end{theorem}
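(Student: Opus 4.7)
The plan is to check well-definedness of the proposed map, produce an additive normal form for the quotient, and identify it with the classical additive decomposition of $H^*(\Bl_Z(Y))$.

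First I would verify well-definedness. The compatibility relation $y\otimes t = 1\otimes\iota^*(y)\,t$ maps to the identity $-\pi^*(y)\cdot[E] = -j_*(p^*(\iota^*(y)))$, which is a direct consequence of the projection formula $\pi^*(y)\cdot j_*(1) = j_*(j^*\pi^*(y))$ together with $j^*\pi^* = p^*\iota^*$, where $j\colon E\hookrightarrow \Bl_Z(Y)$ and $p\colon E\to Z$ are the maps from the exceptional divisor. The Chern polynomial relation $P_z(t) = 0$ maps, after substitution, to
\[
\pi^*\iota_*(z) = \sum_{k=0}^{d-1} j_*\bigl(p^*(z\,c_k(N_Z Y))\bigr)\cdot(-[E])^{d-1-k},
\]
which is the key formula of intersection theory (see \cite{FultonIntersectionTheory}): it expresses $\pi^*\iota_* = j_*(c_{d-1}(Q)\cdot p^*(\cdot))$, with $Q$ the universal quotient bundle on $E = \PP(N_Z Y)$, followed by the expansion of $c_{d-1}(Q)$ in powers of $j^*[E]$.

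The next step is to put every element of $A/(\text{relations})$ into a normal form. The compatibility relation reduces any monomial $y\otimes z\,t^k$ with $k\ge 1$ to $1\otimes \iota^*(y)z\,t^k$, so the positive $t$-degree part of the quotient is spanned by the submodules $1\otimes H^*(Z)\,t^k$. The Chern polynomial relation, multiplied by $t^j$ for $j\ge 0$, then allows one to reduce any $t$-degree $\ge d$ to strictly lower $t$-degree, picking up each time a correction in $H^*(Y)\otimes 1$ from the term $\iota_*(z)\otimes 1$. Iterating, the quotient is additively spanned by
\[
\bigl(H^*(Y)\otimes 1\bigr) \;\oplus\; \bigoplus_{k=1}^{d-1}\bigl(1\otimes H^*(Z)\bigr)\,t^k.
\]
I would then invoke the classical additive decomposition
\[
H^*(\Bl_Z(Y)) = \pi^*H^*(Y) \;\oplus\; \bigoplus_{k=1}^{d-1} j_*p^*(H^*(Z))\cdot(-[E])^{k-1},
\]
which holds without any surjectivity hypothesis and follows from the projective bundle formula for $E = \PP(N_Z Y)$ together with the long exact sequence of the pair $(\Bl_Z(Y), E)$. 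The proposed map sends the normal form summand by summand onto this decomposition, with $\pi^*$ injective on the degree-zero piece. Since both sides are finitely generated $\ZZ$-modules of the same rank in each cohomological degree, the graded surjection is an isomorphism.

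The main obstacle, relative to Keel's original theorem, lies in the normal-form step: without surjectivity of $\iota^*$, classes of $H^*(Z)\setminus\iota^*(H^*(Y))$ cannot be written as restrictions from $H^*(Y)$, and $H^*(\Bl_Z(Y))$ cannot be presented as a quotient of $H^*(Y)[t]$ alone. The enlarged algebra $A$ supplies the missing generators as elements $1\otimes z\,t^k$, and the compatibility relation $y\otimes t = 1\otimes \iota^*(y)\,t$ identifies the part of $H^*(Z)$ visible through $\iota^*$ with the $H^*(Y)\otimes 1$ side, avoiding overcount and making the rank match the classical blowup decomposition.
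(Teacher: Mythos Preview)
Your proposal is correct and follows essentially the same approach as the paper: both arguments verify the two relations via the projection formula and the excess intersection formula (expanding $c_{d-1}$ of the universal quotient bundle in powers of $[E]$), and both conclude by matching the additive normal form of the quotient with the standard decomposition $H^*(Y)\oplus\bigoplus_{k=1}^{d-1}H^*(Z)t^k$ of $H^*(\Bl_Z Y)$. Your normal-form reduction is spelled out a bit more explicitly than in the paper, which invokes the exact sequence $0\to H^*(Y)\to H^*(\Bl_Z Y)\oplus H^*(Z)\to H^*(E)\to 0$ directly, but the logic is the same.
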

\begin{proof}
    The following proof is analogous to the one in \cite[Appendix, Theorem 1]{Keel92}, but in our case the map $\iota^*$ not need to be surjective.
    
    Let $E$ be the exceptional divisor, from the exact sequence
    \[ 0 \to H^*(Y) \to H^*(\Bl_Z Y) \oplus H^*(Z) \to H^*(E) \to 0\]
    we have
    \begin{equation} \label{eq:additive_cohomology}
        H^{\ast}(\Bl_Z(Y)) \simeq H^*(Y) \oplus H^*(Z)t \oplus \dots \oplus H^*(Z)t^{d-1} \simeq \faktor{A}{(P_z(t), y \otimes t- 1 \otimes \iota^*(y)t)}
    \end{equation}
    as modules and the map $A \to H^{\ast}(\Bl_Z(Y))$ is surjective. 
    Therefore, it is enough to verify that the relations $P_z(t)$ and $y \otimes t- 1 \otimes \iota^*(y)t$ hold in $H^*(\Bl_Z(Y))$ for any $z \in H^*(Z)$ and any $y \in H^*(Y)$.
    Consider the diagram
\begin{center}
\begin{tikzcd}
E \arrow[d, "p"'] \arrow[r, "j", hook] & \Bl_Z(Y) \arrow[d, "\pi"] \\
Z \arrow[r, "\iota", hook]                   & Y 
\end{tikzcd}
\end{center}
    and using the projection formula we have:
    \[ y \otimes t \mapsto \pi^*(y) \cdot (-[E]) = -\pi^*(y) \cdot j_*(1) = -j_*(j^* \pi^* (y) \cdot 1), \]
    \[ 1 \otimes \iota^*(y) t \mapsto -j_*(p^* \iota^*(y)) = -j_*(j^* \pi^* (y)). \]
    Now, consider the universal quotient bundle $F= p^*N_Z Y/\mathcal{O}_E(-1)$. By multiplicativity of the Chern polynomial on short exact sequences the equality
    $ p^*c_{N_Z Y} = c_{p^* N_Z Y} = c_{\mathcal{O}_E(-1)} c_F $ holds,
    where $c_{\mathcal{O}_E(-1)}(t) = t+j^*[E]$. 
    Hence $c_i(p^*N_Z Y) = c_i(F) + c_{i-1}(F) j^*[E]$.
    Solving the recurrence relations we obtain 
    \[c_{d-1}(F) = \sum_{k=1}^d p^*c_k(N_Z Y) \cdot j^*(-[E])^{d-k-1}.\]

    The element $P_z(t)$ is mapped to zero because:
    \begin{align*}
        P_z(t) \mapsto &\sum_{k=0}^{d-1} -j_*p^*(z \cdot c_k(N_Z Y)) \cdot (-[E])^{d-k-1} + \pi^*\iota_*(z) \\
        &= -\sum_{k=0}^{d-1} j_* (p^*z \cdot p^*c_k(N_Z Y) \cdot j^*(-[E])^{d-k-1}) + \pi^*\iota_*(z) \\
        &=  -j_* (p^*z \cdot \sum_{k=0}^{d-1} p^*c_k(N_Z Y) \cdot j^*(-[E])^{d-k-1}) + \pi^*\iota_*(z) \\
        &= -j_* (p^*z \cdot c_{d-1}(F)) + \pi^*\iota_*(z) =0.
    \end{align*}
    The last equality follows from the excess intersection theorem
    \[ j_* (p^*z \cdot c_{d-1}(F)) = \pi^*\iota_*(z). \]
    This completes the proof.
\end{proof}

\begin{lemma} \label{lemma:normal_bundle}
    Let $W,Z$ be smooth subvarieties in $Y$ and $\pi\colon \tilde{Y}=\Bl_Z(Y) \to Y$ be the canonical projection. Let $\tilde{W}$ be the dominant transform of $W$ and $E$ be the exceptional divisor.
    Then:
    \begin{enumerate}
        \item If $Z \subset W$ then $c_{N_{\tilde{W}}\tilde{Y}}(t) = \pi^*c_{N_{W} Y}(t-[E])$.
        \item If the intersection of $Z$ and $W$ is transversal then $c_{N_{\tilde{W}}\tilde{Y}}(t) = \pi^* c_{N_{W} Y}(t)$.
    \end{enumerate}
\end{lemma}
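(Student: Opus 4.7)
The plan is to reduce each identity to a natural isomorphism of vector bundles on $\tilde{W}$ and then apply multiplicativity of the Chern polynomial. For Case (1), assume $Z \subsetneq W$ so that the dominant transform $\tilde{W}$ is the strict transform, canonically identified with $\Bl_Z W$; the key claim to establish is
\[ N_{\tilde{W}}\tilde{Y} \;\cong\; \pi|_{\tilde{W}}^{\ast} N_W Y \otimes \mathcal{O}_{\tilde{Y}}(-E)|_{\tilde{W}}. \]
Once this is in hand, the splitting principle concludes the argument: if $\alpha_1,\dots,\alpha_{c'}$ are formal Chern roots of $N_W Y$ with $c' = \codim_Y W$, then those of $N_{\tilde{W}}\tilde{Y}$ are $\pi^{\ast}\alpha_i - [E]|_{\tilde{W}}$, and therefore
\[ c_{N_{\tilde{W}}\tilde{Y}}(t) \;=\; \prod_{i=1}^{c'}\bigl(t + \pi^{\ast}\alpha_i - [E]\bigr) \;=\; \pi^{\ast} c_{N_W Y}(t - [E]). \]

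To prove the bundle isomorphism, I would work in local coordinates near a point of $Z$: choose $(x_1,\dots,x_n)$ on $Y$ with $W = V(x_1,\dots,x_{c'})$ and $Z = V(x_1,\dots,x_c)$, where $c' < c = \codim_Y Z$. In the blowup chart where $x_j$ is the leading coordinate for some $c' < j \leq c$, set $t = x_j$ and $v_i = x_i/x_j$ for the remaining $i \leq c$; then $E = \{t=0\}$ and $\tilde{W} = \{v_1 = \dots = v_{c'} = 0\}$. The natural morphism of conormal sheaves $\pi^{\ast}(I_W/I_W^2) \to I_{\tilde{W}}/I_{\tilde{W}}^2$ sends $\pi^{\ast}x_i = tv_i$ to $t \cdot [v_i]$, exhibiting $\pi^{\ast} N_W^{\ast} Y$ as $t \cdot N_{\tilde{W}}^{\ast}\tilde{Y} \cong N_{\tilde{W}}^{\ast}\tilde{Y} \otimes \mathcal{O}(-E)|_{\tilde{W}}$; the map is natural and hence glues globally, and dualising yields the claim. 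For Case (2), transversality allows coordinates in which $Z = V(x_1,\dots,x_c)$ and $W = V(x_{c+1},\dots,x_{c+c'})$ use disjoint sets of variables, so any blowup chart leaves the equations of $W$ untouched and $\pi^{-1}(W) = \tilde{W}$ scheme-theoretically. Hence $\pi$ is transverse to $W$, and the standard transversality property for normal bundles gives $N_{\tilde{W}}\tilde{Y} \cong \pi|_{\tilde{W}}^{\ast} N_W Y$, from which $c_{N_{\tilde{W}}\tilde{Y}}(t) = \pi^{\ast} c_{N_W Y}(t)$ follows at once.

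The delicate step I expect is the local conormal computation of Case (1): the factor $t$ appearing when pulling back generators of $I_W/I_W^2$ is precisely what produces the twist by $\mathcal{O}(-E)$, and thereby the shift $t \mapsto t - [E]$ in the Chern polynomial. Case (2) is essentially formal once one observes that transversality survives the blowup away from the centre, so the real content is concentrated in that conormal identification, together with the multiplicativity of the Chern polynomial under tensoring with a line bundle.
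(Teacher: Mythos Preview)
Your proposal is correct and follows the same approach as the paper: both reduce the statement to the bundle isomorphisms $N_{\tilde{W}}\tilde{Y}\cong \pi^{\ast}N_{W}Y\otimes\mathcal{O}(-E)$ in case~(1) and $N_{\tilde{W}}\tilde{Y}\cong \pi^{\ast}N_{W}Y$ in case~(2), and then read off the Chern polynomial via the formula $c_{N\otimes L}(t)=c_{N}(t+c_{1}(L))$. The only difference is that the paper simply cites these two isomorphisms (to \cite[Lemma~7.1]{DeConciniGaiffi2019} and \cite[Remark~3.2.3(b)]{FultonIntersectionTheory}), whereas you supply the local conormal computation and invoke the splitting principle directly; your version is therefore more self-contained but not a different argument.
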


\begin{proof}
    The latter statement follows from $N_{\tilde{W}}\tilde{Y}\simeq \pi^* N_{W} Y$, see \cite[Lemma 7.1]{DeConciniGaiffi2019}.
    The first one follows by $N_{\tilde{W}}\tilde{Y}\simeq \pi^* N_{W} Y \otimes \mathcal{O}(-E)$ and by $c_{N \otimes L}(t)=c_N(c_L(t))$ for any vector bundle $N$ and any line bundle $L$, see e.g.\ \cite[Remark 3.2.3 b)]{FultonIntersectionTheory}.
\end{proof}

\section{Recollection on toric wonderful models}
\label{sec:toric_wonderful_model}

In this section we recall the contruction of certain toric varieties from \cite{DeConciniGaiffi2018} which will be used as compactification for toric wonderful models. We then quickly recall the construction of wonderful models for arrangements of subvarieties in the sense of \cite{Li09}.
Finally, we provide a suitable compactification for the running example.\\

Consider a toric arrangement $\AAA$ in the ambient torus $T$. Fix a basis of the character group $X^{\ast}(T)$, and get isomorphisms $X^{\ast}(T)\simeq\ZZ^n$, $X_{\ast}(T)=\Hom(X^{\ast}(T),\ZZ) \cong \ZZ^n$, and $T\cong(\CC^{\ast})^n$. It will be useful to recall some more notation: given a layer $\KK_{\Gamma,\phi}$ of $\AAA$, define:
\begin{itemize}
    \item $V := \Hom_\ZZ(X^{\ast}(T),\RR) = X_{\ast}(T)\otimes_\ZZ \RR$
    \item $\Ann(\Gamma):= \{v\in V \mid \left\langle\chi,v\right\rangle=0\text{, for all }\chi\in\Gamma \}$
\end{itemize}
In \cite{DeConciniGaiffi2018}, De Concini and Gaiffi introduced a wonderful compactification for the complement space of a toric arrangement. They embedded the ambient torus $T$ in a \emph{good} toric variety $X_{\Delta}$ where $\Delta\subseteq V$ with respect to the arrangement $\AAA$. We now recall the definition of good toric variety, and its main properties.
\begin{definition}
Consider a basis $\{\chi_1,\ldots,\chi_t\}$ of $\Gamma$, and any cone $C=\langle r_1,\ldots, r_k\rangle_{\geq 0}$ of $\Delta$. We say that the chosen basis has the \emph{equal-sign property} with respect to $C$ if $\langle \chi_i,r_j\rangle \geq 0$ or $\langle \chi_i,r_j\rangle \leq 0$ for all $i=1,\ldots,t$ and $j=1,\ldots k$. If every layer $\KK_{\Gamma,\phi}$ of $\AAA$ has has an equal-sign basis with respect to every cone of $\Delta$, we say that the toric variety $X_\Delta$ is \emph{equal-sign} with respect to $\AAA$. If $X_\Delta$ is also a smooth, projective variety, we say that it is a \emph{good} toric variety for $\AAA$.
\end{definition}
De Concini and Gaiffi provided an algorithm in \cite{DeConciniGaiffi2018} that, given a complete, smooth toric variety $X_\Delta$ and a toric arrangement, constructs a good toric variety for $\AAA$ coming from a subdivision of the fan $\Delta$. Every subdivision of the fan corresponds geometrically to a blowup of the closure of a toric orbit of codimension $2$.

\begin{theorem}
    Let $X_\Delta$ be a good toric variety for $\AAA$. Let $\KK_{\Gamma,\phi}$ be a layer, and $T' = \bigcap_{\chi\in\Gamma}\ker(\chi)$ the subtorus of $T$ associated to it. Then, the following facts hold:
    \begin{enumerate}
        \item For every cone $C\in\Delta$, its relative interior is either entirely contained in $\Ann(\Gamma)$, or disjoint from it.
    \item The set of cones of $\Delta$ contained in $\Ann(\Gamma)$ form a smooth fan with open orbit $T'$
    \item The closure of the layer $\overline{\KK_{\Gamma,\phi}}$ in $X_\Delta$ is a smooth toric variety whose fan is $\Delta_{\KK_{\Gamma,\phi}}:=\Delta\cap\Ann(\Gamma).$
    \item Let $\mathcal{O}$ be a $T$-orbit of $X_\Delta$, and $C_{\mathcal{O}}$ be its corresponding cone in $\Delta$. Then: \begin{itemize} \item if $C_{\mathcal{O}}\cap\Ann(\Gamma) = \emptyset$, then $\bar{\mathcal{O}}\cap\overline{\KK_{\Gamma,\phi}} = \emptyset$
    \item if $C_{\mathcal{O}}\subseteq\Ann(\Gamma)$, then $\mathcal{O}\cap\overline{\KK_{\Gamma,\phi}}$ is the $T'$-orbit in $\overline{\KK_{\Gamma,\phi}}$ corresponding to the cone $C_{\mathcal{O}}$
    \end{itemize}
    \end{enumerate}
\end{theorem}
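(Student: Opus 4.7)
The plan is to handle the four parts in sequence; the equal-sign property does all the real work in Part~(1), and the remaining parts reduce to standard toric geometry once Part~(1) is in place.

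For Part~(1), I would fix a cone $C=\langle r_1,\dots,r_k\rangle_{\geq 0}$ of $\Delta$ together with an equal-sign basis $\{\chi_1,\dots,\chi_t\}$ of $\Gamma$ with respect to $C$. A point in the relative interior has the form $v=\sum_j a_j r_j$ with all $a_j>0$, so $\langle\chi_i,v\rangle=\sum_j a_j\langle\chi_i,r_j\rangle$ is a sum of terms of the same sign; it vanishes precisely when every $\langle\chi_i,r_j\rangle=0$, a condition on the rays alone, independent of the choice of $a_j$. Conjoining over $i=1,\dots,t$ yields the dichotomy. For Part~(2), Part~(1) implies that $\Delta_{\KK_{\Gamma,\phi}}:=\{C\in\Delta\mid C\subseteq\Ann(\Gamma)\}$ is closed under taking faces and intersections, hence is a subfan of $\Delta$. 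Smoothness is inherited: since $\Gamma$ is a direct summand of $X^{\ast}(T)$, the sublattice $\Ann(\Gamma)\cap X_{\ast}(T)$ is saturated, so the primitive generators of any cone of $\Delta_{\KK_{\Gamma,\phi}}$ extend to a $\ZZ$-basis of this lattice, which is the cocharacter lattice of $T'$. The apex indexes the dense orbit, which must be $T'$.

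For Part~(3), I would invoke the standard fact that the closure of a subtorus in a toric variety is itself a toric variety for the subtorus, whose fan is obtained by intersecting the ambient fan with the real cocharacter space of the subtorus; here that space is precisely $\Ann(\Gamma)$. Since $\KK_{\Gamma,\phi}=t_0\cdot T'$ is a coset and translation by $t_0\in T$ is an automorphism of $X_\Delta$ preserving $\Delta$, $\overline{\KK_{\Gamma,\phi}}$ is isomorphic to $\overline{T'}$ as a smooth $T'$-toric variety with fan $\Delta_{\KK_{\Gamma,\phi}}$. For Part~(4), I would combine the orbit-cone correspondence for $T \curvearrowright X_\Delta$ with the one for $T' \curvearrowright \overline{\KK_{\Gamma,\phi}}$, using Part~(3) to identify the fan of the latter. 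If $C_{\mathcal O}\subseteq\Ann(\Gamma)$, compatibility of the two correspondences identifies $\mathcal O\cap\overline{\KK_{\Gamma,\phi}}$ with the $T'$-orbit indexed by $C_{\mathcal O}$. Otherwise, because $\Ann(\Gamma)$ is a linear subspace, any cone having $C_{\mathcal O}$ as a face also escapes $\Ann(\Gamma)$, so every $T$-orbit appearing in $\bar{\mathcal O}$ avoids $\overline{\KK_{\Gamma,\phi}}$.

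The main delicate point is Part~(3): confirming that the fan of $\overline{T'}$ is exactly $\Delta\cap\Ann(\Gamma)$, and not some proper refinement, requires checking on each affine chart $U_C$ of $X_\Delta$ that the vanishing ideal of $\overline{T'}\cap U_C$ is the toric ideal dictated by the restricted fan. Part~(1) is essential here, because it excludes cones $C$ sitting only partly in $\Ann(\Gamma)$ from contributing unforeseen components to the boundary of $\overline{T'}$; without that dichotomy one would have to handle mixed cones by hand, and it is exactly this eventuality that motivates the equal-sign hypothesis in the first place.
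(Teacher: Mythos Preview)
The paper does not give its own proof of this theorem: it appears in Section~4, ``Recollection on toric wonderful models,'' as a statement of results from \cite{DeConciniGaiffi2018}, with no accompanying argument. So there is no in-paper proof to compare against.

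Your sketch is correct and follows the natural line. The use of the equal-sign basis in Part~(1) is exactly the intended mechanism, and your observation that vanishing of $\sum_j a_j\langle\chi_i,r_j\rangle$ with $a_j>0$ forces each $\langle\chi_i,r_j\rangle=0$ is the key computation. For Part~(2), your remark that saturation of $\Ann(\Gamma)\cap X_*(T)$ in $X_*(T)$ transfers smoothness is right: the primitive generators of a smooth cone span a direct summand of $X_*(T)$, and a summand contained in a saturated sublattice is a summand of that sublattice. In Part~(3) the reduction to $\overline{T'}$ via translation is standard; your acknowledgement that Part~(1) is what prevents extraneous boundary components is the essential point. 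One small wrinkle in Part~(4): the hypothesis ``$C_{\mathcal O}\cap\Ann(\Gamma)=\emptyset$'' as literally written in the statement is never satisfied (the origin lies in both), so it should be read as ``the relative interior of $C_{\mathcal O}$ is disjoint from $\Ann(\Gamma)$,'' equivalently $C_{\mathcal O}\not\subseteq\Ann(\Gamma)$ by Part~(1). With that reading, your argument that every orbit in $\bar{\mathcal O}$ is indexed by a cone containing $C_{\mathcal O}$, hence not contained in $\Ann(\Gamma)$, combined with Part~(3) identifying which $T$-orbits $\overline{\KK_{\Gamma,\phi}}$ meets, gives the conclusion.
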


Essentially, a good toric variety is smooth, projective, and such that the intersection of the closures of the layers in $\AAA$ and of the the toric divisors intersect nicely. That is, such set of intersections forms an \emph{arrangement of subvarieties} in the sense of Li (see \cite{Li09}). We recall some of Li's definitions:

\begin{definition}
    A \emph{simple arrangement of subvarieties} of a nonsingular variety $X$ is a finite set $\Lambda$ of nonsingular, closed, connected subvarieties properly contained in $X$ such that:
    \begin{itemize}
        \item for all $Z_1,Z_2\in\Lambda$, $Z_1\cap Z_2$ is either empty, or an element of $\Lambda$

        \item Whenever $Z_1\cap Z_2\neq\emptyset$, the intersection is \emph{clean}, i.e. it is nonsingular, and for all $z\in Z_1 \cap Z_2$ we have the condition on tangent spaces: $$\T_z(Z_1\cap Z_2) = \T_z(Z_1)\cap\T_z(Z_2)$$
    \end{itemize}
\end{definition}

\begin{definition}
    An \emph{arrangement of subvarieties} of a nonsingular variety $X$ is a finite set $\Lambda$ of nonsingular, closed, connected subvarieties properly contained in $X$ such that:
    \begin{itemize}
        \item for all $Z_1,Z_2\in\Lambda$, $Z_1\cap Z_2$ is either empty, or a disjoint union of elements of $\Lambda$

        \item Whenever $Z_1\cap Z_2\neq\emptyset$, the intersection is clean.
    \end{itemize}
\end{definition}
In our case, $X=X_\Delta$ is a good toric variety for $\AAA$, and $\Lambda$ is the aforementioned set of connected components of intersections of $$\{\overline{\KK}\}_{\KK\in\AAA}\cup \{D\mid D\text{ irreducible component of }X_\Delta\setminus T\}.$$ In the last section of the paper, we will analyze some of these intersections after the blowup process. This will be the first step for the definition of a Morgan model for the arrangement. \\ A wonderful compactification in the sense of Li for the complement space can be constructed by blowing up a in a prescribed order a certain subset $\G\subseteq\Lambda$ of varieties, a \emph{building set}.
\begin{definition}
    Let $\Lambda$ be a simple arrangement of subvarieties in $X$. A subset $\G\subseteq\Lambda$ is a \emph{building set} for $\Lambda$, if for each subvariety $Z\in\Lambda$, the minimal (with respect to inclusion) elements of $\{G\in\G\mid G\supseteq Z\}$ intersect transversally, and their intersection is $Z$. The minimal elements are called the $\G$-factors of $Z$.
\end{definition}
Let $U\subseteq X$ be an open set. Denote with $\Lambda_U$ the arrangement $$\Lambda_U := \{Z\cap U\mid Z\in\Lambda, Z\cap U\neq\emptyset \}.$$
\begin{definition}
    Let $\Lambda$ be an arrangement of subvarieties in $X$. A subset $\G\subseteq\Lambda$ is a \emph{building set} for $\Lambda$, if there is an open cover $\mathcal{U}$ of $X$ such that:
    \begin{itemize}
        \item the arrangement $\Lambda_U$ is simple, for every $U\in\mathcal{U}$
        \item the set $\G_U$ is a building set for $\Lambda_U$, for every $U\in\mathcal{U}$
    \end{itemize}
\end{definition}

One can check that the definition of building sets (and nested sets) in the previous section are the combinatorial analogue of those given by Li. For instance, transversality of intersections of elements of $\G$ is expressed in term of the poset of layers by the condition of \Cref{buildingsetdef}.

\begin{definition}[\cite{Li09}, Definition 1.1]
    Given an arrangement of subvarieties $\Lambda$ in a nonsingular variety $X$ and a building set $\G$, a \emph{wonderful compactification} $Y_\G$ for the complement space $X\setminus\bigcup_{Z\in\Lambda}Z$ is given by the closure of the image of the locally closed embedding: $$X\setminus\bigcup_{Z\in\Lambda}Z \hookrightarrow \prod_{G\in \G}\Bl_{G}X.$$
\end{definition}

\begin{theorem}[\cite{Li09}, Theorem 1.3]
    Let $X$ be nonsingular variety, and $\G = \{G_1\ldots,G_m\}$ building set ordered in such a way that $\G_k = \{G_1,\ldots, G_k\}$ is building set for every $k=1,\ldots,m$. Then, $$Y_\G = \Bl_{\widetilde{G}_m}\cdots\Bl_{\widetilde{G}_2}\Bl_{G_1} X.$$
\end{theorem}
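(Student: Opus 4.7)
My approach is induction on $m$. The base case $m=1$ reduces to noting that $X \setminus G_1$ is dense in $\Bl_{G_1} X$, so its closure in $\Bl_{G_1} X$ is all of $\Bl_{G_1} X$. For the inductive step, assume by induction that $Y_{\G_{m-1}} \simeq Y' := \Bl_{\widetilde{G}_{m-1}} \cdots \Bl_{G_1} X$; the hypothesis that every $\G_k$ is itself a building set is precisely what validates applying the inductive statement to $\G_{m-1}$. I then compare $Y_{\G_m}$ with $\Bl_{\widetilde{G}_m} Y'$ via the factorization $\prod_{G \in \G_m}\Bl_G X = \prod_{G \in \G_{m-1}} \Bl_G X \times \Bl_{G_m} X$, which furnishes two natural projections from $Y_{\G_m}$: one $\phi \colon Y_{\G_m} \to Y'$ (forgetting the last factor) and one $\psi \colon Y_{\G_m} \to \Bl_{G_m} X$ (onto the last factor).

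Before invoking the universal property of blowups, I need to verify that the dominant transform $\widetilde{G}_m$ is a smooth closed subvariety of $Y'$. This is the combinatorial heart of the argument and depends on the building set hypothesis: for each $j < m$, either $G_j \supseteq G_m$, or $G_j \subseteq G_m$, or $G_j \cap G_m$ decomposes cleanly as a union of elements of $\G_{m-1}$. A local analysis in the spirit of Li then shows that at every intermediate step the dominant transform of $G_m$ remains smooth and the exceptional divisors introduced so far meet it transversally. Once this is established, pulling back the exceptional divisor of $\Bl_{G_m} X$ along $\psi$ yields a Cartier divisor supported on $\phi^{-1}(\widetilde{G}_m)$, and the universal property of blowups produces a canonical morphism $f \colon Y_{\G_m} \to \Bl_{\widetilde{G}_m} Y'$ over $Y'$.

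The main obstacle is to prove that $f$ is an isomorphism. Both source and target are smooth, proper, and birational over $Y'$, and they agree on the dense open subset $X \setminus \bigcup_{G \in \G_m} G$, so the bulk of the work is to control what happens over the exceptional loci. I would show $f$ is quasi-finite; combined with properness and normality, Zariski's Main Theorem then forces $f$ to be an isomorphism. Quasi-finiteness is a local statement on $Y'$ and reduces, once more, to the local product decomposition of the arrangement of subvarieties that the building set axiom provides near each point. This final local check is exactly where the careful bookkeeping of dominant transforms versus strict transforms enters, and it is the technically delicate portion of the argument.
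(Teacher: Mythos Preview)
The paper does not give its own proof of this statement: it is quoted verbatim as \cite[Theorem 1.3]{Li09} in the recollection section and used as a black box. There is therefore no proof in the present paper to compare your proposal against.

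As for the proposal itself, your inductive strategy and the use of the universal property of blowups are the right shape, and this is essentially how Li's argument proceeds. But what you have written is a proof outline rather than a proof: the two passages you flag as ``the combinatorial heart'' and ``the technically delicate portion'' are exactly the content of Li's theorem, and you have only named them. In particular, the assertion that the dominant transform $\widetilde{G}_m$ stays smooth through the intermediate blowups, and that the pullback of the exceptional divisor of $\Bl_{G_m}X$ is a Cartier divisor with the correct support on $Y'$, both require the local product/coordinate analysis that Li carries out carefully (his Lemma~2.6 and Proposition~2.8). Similarly, ``I would show $f$ is quasi-finite'' is a statement of intent, not an argument. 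If you intend to reprove Li's theorem rather than cite it, those local computations cannot be skipped; if you are content to cite it, then no proof is needed here at all.
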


Again, the combinatorial blow-up in the previous section acts on the poset of layers in the same way as the geometric blowup acts on $X$. We now choose a good toric variety for our running example, after a couple of remarks about toric varieties.

\begin{theorem}\label{thm:cohomology_toric}
The cohomology ring of a toric variety $X_{\Delta}$ is given by
$$ H^{\ast}(X_{\Delta},\ZZ) = \mathbb{Z}[c_r]_{r\in\mathcal{R}}/L_\Delta ,$$
where $\mathcal{R}$ is the set of rays of $\Delta$, and the ideal $L_\Delta$ is generated by
\begin{itemize}
\item[a)]$c_{r_1}\cdots \ c_{r_k}$, for all subsets of rays $\{c_{r_1},\ldots,c_{r_k}\}$ not spanning a cone;
\item[b)]$\sum_{r\in\mathcal{R}}\langle\beta,r\rangle c_r$, for any $\beta\in X^{\ast}(T)$.
\end{itemize}
\end{theorem}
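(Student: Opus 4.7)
The plan is to use the Białynicki-Birula decomposition of the smooth projective toric variety $X_\Delta$. Choosing a generic one-parameter subgroup, $X_\Delta$ decomposes into affine cells indexed by the cones of $\Delta$, where each cell is $T$-invariant and isomorphic to an affine space. Since every cell has even real dimension, $H^*(X_\Delta, \ZZ)$ is torsion-free, concentrated in even degrees, and additively spanned by fundamental classes $[V(\sigma)]$ of the closed $T$-orbits $V(\sigma) = \overline{O_\sigma}$. In particular, $H^2(X_\Delta, \ZZ)$ is generated by the divisor classes $c_r := [D_r]$ for $r \in \mathcal{R}$, which gives a surjective ring homomorphism $\varphi \colon \ZZ[c_r]_{r\in\mathcal{R}} \to H^*(X_\Delta, \ZZ)$.

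Next I would verify that $\varphi$ descends to the quotient by $L_\Delta$. For a relation of type a), if $\{r_1,\ldots,r_k\}$ does not span a cone of $\Delta$, the orbit–cone correspondence gives $D_{r_1} \cap \cdots \cap D_{r_k} = \emptyset$, so the cup product $c_{r_1} \cdots c_{r_k}$ vanishes. For a relation of type b), each character $\beta \in X^*(T)$ defines a rational function on $X_\Delta$ whose principal divisor is $\sum_{r\in\mathcal{R}} \langle\beta,r\rangle D_r$; since principal divisors are null-homologous, the corresponding linear combination of $c_r$ is zero in $H^2$. Together with an inductive argument using the transverse intersection $c_r \cdot [V(\sigma)] = [V(\sigma + \RR_{\geq 0} r)]$ whenever $\sigma$ together with $r$ spans a cone, this shows every $[V(\sigma)]$ lies in the image of a monomial in the $c_r$, confirming surjectivity once more with an explicit monomial basis modulo $L_\Delta$.

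To conclude injectivity, I would compare Hilbert series. On the topological side, the Białynicki-Birula decomposition yields the Poincaré polynomial $\sum_i h_i(\Delta)\, t^{2i}$, where $h_i(\Delta)$ is the $h$-vector read off the fan. On the algebraic side, the Stanley-Reisner ring $\ZZ[c_r]/(\text{type a})$ has Hilbert series encoding the $f$-vector, and the linear forms of type b)---being $n$ in number and coming from a basis of $X^*(T)$---form a regular sequence modulo the Stanley-Reisner ideal because $\Delta$ is a complete simplicial fan. Dividing by this regular sequence converts the $f$-vector generating function into $\sum h_i t^i$, matching the cohomology ranks degree by degree.

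The main obstacle is the injectivity step: one must know that the $n$ linear relations in b) cut out a regular sequence in the Stanley-Reisner ring, which is precisely where completeness of $\Delta$ is essential. For non-complete smooth fans, these relations need not be regular and the naïve presentation fails; in our setting completeness (in fact projectivity, via the good-toric-variety hypothesis) is built in, so the argument goes through and the map $\ZZ[c_r]/L_\Delta \to H^*(X_\Delta,\ZZ)$ is an isomorphism of graded rings.
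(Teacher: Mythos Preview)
The paper does not actually prove this theorem: it is stated as classical background and the reader is referred to Fulton's book on toric varieties. So there is no ``paper's own proof'' to compare against.

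Your sketch is essentially the standard argument and is broadly correct, but one point deserves tightening. The Bia\l{}ynicki-Birula cells for a generic one-parameter subgroup are indexed by the $T$-fixed points, i.e.\ by the \emph{maximal} cones of $\Delta$, not by all cones; each such cell is an affine space whose dimension is read off from the pairing of the one-parameter subgroup with the rays of the corresponding maximal cone. What is true---and what you then use---is that the classes $[V(\sigma)]$ for \emph{all} cones $\sigma$ span $H^*(X_\Delta;\ZZ)$ additively (with redundancies), and that each such class is a monomial in the $c_r$. Your verification of the relations (a) and (b) and the Hilbert-series comparison via the $h$-vector are fine; the regular-sequence step over $\ZZ$ is the one genuinely delicate point, and it is here that smoothness (unimodularity of the cones), not just simpliciality, is needed to avoid torsion issues.
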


\begin{comment}
This result, together with \cite[Theorem 5.1]{DeConciniGaiffi2019} implies:
\begin{proposition}\label{prop:restriction_cohomology}
Let $X_\Delta$ be a good toric variety for the arrangement $\AAA$ and $\KK_{\Gamma,\phi}\in\AAA$.
The restriction map in cohomology 
$$j^{\ast} \colon H^{\ast}(X_\Delta,\ZZ)\rightarrow H^{\ast}(\overline{\KK}_{\Gamma,\phi})$$
is surjective with kernel generated by the classes $c_r$ with $r\in\mathcal{R}$ such that $r\notin \Ann(\Gamma)$.
\end{proposition}

\end{comment}

\begin{remark}
A toric variety $X_{\Delta}$ is smooth if:
    \begin{itemize}
        \item Every $l$-dimensional cone in $\Delta$ is be simplicial, i.e.\ generated by $l$ rays;
        \item For every cone $C \in \Delta$ the primitive vectors of the rays can be completed to a basis for the lattice $\Gamma$. 
    \end{itemize}
\end{remark}
For general information on toric varieties, we refer to \cite{FultonToricVarieties}.

\begin{center}
\begin{figure}
    \includegraphics[width = 50mm]{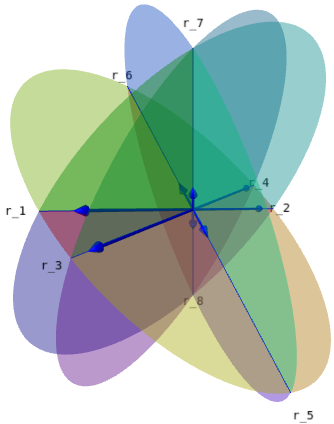}
    \includegraphics[width = 60mm]{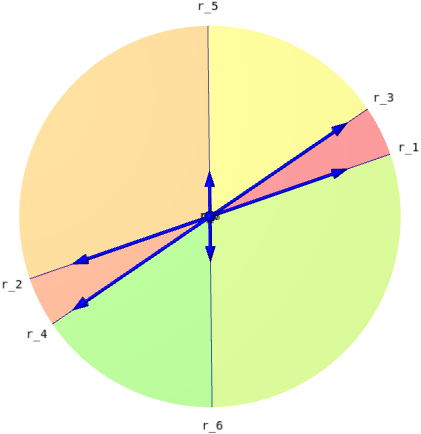}
    \caption{Equal-sign, non-smooth toric variety for $\AAA$ and its restriction to the hyperplane $x-z=0$.}
\label{fig:RunningFan}
\end{figure}
\end{center}

\begin{example}
\label{running_fan}
    Consider the arrangement $\AAA$ of \Cref{running}.
    We choose a good toric variety in the following way. In Section 8 of \cite{DeConciniGaiffi2018} it is shown how to choose an equal-sign, but not necessarily smooth, toric variety for an arrangement. 
    %We now see how.
    Consider the hyperplane arrangement defined by $\AAA$:
    \begin{align*}
        & a \colon x=0,\\
        & b \colon x-3y = 0,\\
        & c_1 \colon x-z = 0,\\
        & c_2 \colon 2x-3y = 0,
    \end{align*}
    and the complete fan $\Delta$ defined by the closure of the chambers of this arrangement.
    We depict $\Delta$ in \Cref{fig:RunningFan}.
    Notice that all the maximal cones contain one of the rays $r_7=(0,0,1)$ or $r_8=(0,0,-1)$, hence the interesting part of the fan is contained in the plane $x-z=0$; we depict the intersection of the fan with this plane.
    The fan $\Delta$ obtained in this way has the equal-sign property with respect to $\AAA$ (by \cite[Section 8]{DeConciniGaiffi2018}).
    In this case $\Delta$ is not smooth, as the rays $r_3 = (3,2,3)$, $r_5=(0,1,0)$ spanning a cone cannot be completed to a $\ZZ$-basis. 
    %Indeed, this would only be possible by adding the rays $r_7$ or $r_8$, for which: $$\left|\det
  %\begin{bmatrix}
  %  3 & 0 & 0\\
  %  2 & 1 & 0\\
  %  3 & 0 & \pm 1\\
  %\end{bmatrix}\right| \neq 1.$$
  To get a good toric variety, it is enough to resolve the singularities of $X_\Delta$. In this case, we subdivide only $2$-dimensional cones on the plane $x-z=0$.
  A resolution of singularities is given by the fan $\tilde{\Delta}$ with rays: 
%For later reference, we list the set of rays of this fan:
\begin{center}
\begin{tabular}{ l l }
 $r_1 = (3,1,3)$ & $r_2 = (-3,-1,-3)$  \\ 
 $r_3 = (3,2,3)$ & $r_4 = (-3,-2,-3)$  \\ 
 $r_5 = (0,1,0)$ & $r_6 = (0,-1,0)$  \\ 
 $r_7 = (0,0,1)$ & $r_8 = (0,0,-1)$  \\ 
 $r_9 = (1,1,1)$ & $r_{10} = (2,1,2)$  \\ 
 $r_{11} = (1,0,1)$ & $r_{12} = (-1,-1,-1)$  \\ 
 $r_{13} = (-2,-1,-2)$ & $r_{14} = (-1,0,-1).$  
\end{tabular}
\end{center}
Like for $\Delta$, the maximal cones are given by $\{r_i,r_j,r_7\}$ and $\{r_i,r_j,r_8\}$ where $\{r_i,r_j\}$ is a maximal cone in the intersection $\tilde{\Delta} \cap \{x- z=0 \}$ depicted in \Cref{fig:SmoothRunningFan}.
%  We depict the intersection of the plane with the resulting toric fan in \Cref{fig:SmoothRunningFan}.
\end{example}

\begin{figure}
    \includegraphics[width = 75mm]{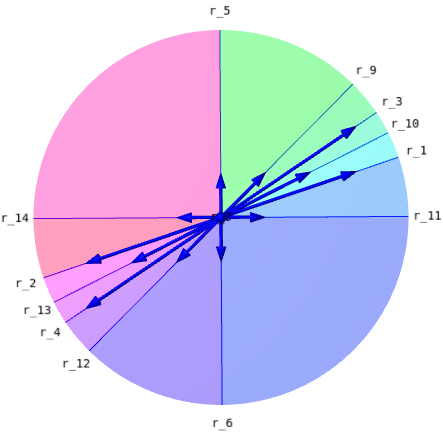}
    \caption{Equal-sign and smooth toric variety for $\AAA$, restricted to the hyperplane $x-z=0$.}
\label{fig:SmoothRunningFan}
\end{figure}

\section{Presentation of the cohomology ring}
\label{sec:presentation}
We use the same notations of the previous sections: $X_{\Delta}$ is a good toric variety for the arrangement $\AAA$, $\LL=\LL(\AAA)$ denotes its poset of layers, and $B=H^{\ast}(X_\Delta,\ZZ)$ is its cohomology ring.
%Let $Z \in \LL$ and consider the closure of the corresponding toric variety $\overline{\KK}_Z=\overline{\KK}_{\Gamma_Z,\phi_Z}$in $X_\Delta$, so that $\Gamma_Z$ is the split direct summand in the character group. 
%Choose any polynomial $P_Z=P_Z^{\hat0}$ in the ring $B[t]$ satisfying the following properties:
%\begin{itemize}
%    \item the constant term of $P_Z$ is the cohomology class $[\overline{\KK}_Z]\in B$
%    \item the restriction map to $H^{\ast}(\overline{\KK}_Z)$ sends $P_Z$ to the Chern polynomial of $N_{\overline{\KK}_Z}X$, the normal bundle to the toric variety $\overline{\KK}_Z$ in $X$.
%\end{itemize}
%Following \cite{DeConciniGaiffi2019}, we call such a polynomial a \emph{good lifting} of the Chern polynomial of the normal bundle $N_{\overline{\KK}_Z}X$.
%Similarly, for the pair $(Z,W)$ in $\LL$ with $W \leq Z$ fix a basis $(\chi_1,\ldots , \chi_s)$ of $\Gamma_Z$ such that $(\chi_1,\ldots,\chi_k)$ is a basis for $\Gamma_{W}$ for $k = \cd(W)\leq \cd(Z) = s$; consider a good lifting of the Chern polynomial of the bundle $N_{\overline{\KK}_Z}(\overline{\KK}_{W})$, and denote it $\overline{P}_Z^{W}\in H^{\ast}(\overline{\KK}_{W})$. Finally, define $P_Z^{W}$ to be a lifting of the previous polynomial to $B[t]$.

\begin{remark}
    In Section 8 of \cite{DeConciniGaiffi2019}, and in Section 4 of \cite{PagariaMoci}, the authors exhibited the following possible explicit choices for the lift of Chern polynomials:
    \[P_Z^{W} (t) = \prod_{j=k+1}^{s}\left( t - \sum_{r\in\mathcal{R}} \min\left(0, \langle\chi_j, r\rangle\right) c_r \right) \in H^{\ast}(X_\Delta,\ZZ)[t]\]
    and respectively
    \[P_Z^{W} (t) = t^{s-k} + \prod_{j=k+1}^{s}\left(- \sum_{r\in\mathcal{R}} \min\left(0, \langle\chi_j, r\rangle\right) c_r \right) \in H^{\ast}(X_\Delta,\ZZ)[t].\]
    Our approach is similar to the second choice for the lift.
\end{remark}

Consider the graded ring $B[t_a \mid a \in (\Bl_\G \LL) \setminus \{\hat{0}\}]$ with $\deg t_a= 2\rk(a)$.
For the sake of notation we set $\tau_F = -\sum_{G \in \mathcal{G}_{\geq F}} t_{\{G\}}$ for any $F \in \G$.

\begin{definition}
\label{def:cohomologyring}
    Let $R(X_\Delta,\G)$ be the ring $B[t_a \mid a \in (\Bl_\G \LL) \setminus \{\hat{0}\}]$ with $\deg t_a= 2\rk(a)$ modulo the ideal $I_{\mathcal{G}}$ generated by the following elements:
\begin{enumerate}[label=\roman*)]
    \item $c_rt_a$ for all $a \in \Bl_\G(\LL)$ and all $r \not \in \Ann \Gamma_{\pi(a)}$.
    \item For all $a \lessdot b \in \Bl_\G \LL$ such that $\pi(a) < \pi (b) $ set $G\in \G$ the unique element such that $b \in \{ G \} \vee a$. The following element
    \begin{equation}
    \label{eq:rel_chern_poly}
        \Bigl(-\sum_{\substack{c \gtrdot a \\ \pi(c) \geq \pi(b)}} t_c \Bigr) \tau_G^{s-1} +t_a \prod_{j=1}^{s}\left(- \sum_{r\in\mathcal{R}} \min\left(0, \langle\chi_j, r\rangle\right) c_r \right)
    \end{equation}
    where $\chi_1, \dots, \chi_s \in \Gamma_{\pi(b)}$ are such that their image in $\Gamma_{\pi(b)}/\Gamma_{\pi(a)}$ form a basis of this lattice.
    \item $t_at_b - t_{a \wedge b} \sum_{c \in a \vee b} t_c$ for all $a,b \in (\Bl_\G \LL) \setminus \{\hat{0}\}$.
\end{enumerate}
\end{definition}

\begin{remark}
    Relation (3) implies that $t_at_b=0$ if $a \vee b= \emptyset$. Moreover, in the case $a \vee b \neq \emptyset$, the element $a \wedge b$ exists and is unique.
\end{remark}

Let us denote relation \eqref{eq:rel_chern_poly} by $Q^b_a(\G)$.
Consider the total order $\succ_{\G}$ on $\G$, with last element $G\in \G$ and associated nested set $g \in \Bl_\G(\LL)$.
Let $\iota\colon \overline{\mathcal{K}}_G' \hookrightarrow Y(X,\G')$ be the embedding of the closure of the layer $\mathcal{K}_G' \subset T$ in the wonderful model for the deleted arrangement $Y(X,\G')$.
Recall from \cite[Theorem 3.1]{DeConciniGaiffi2018} that $\overline{\mathcal{K}}_G'\simeq Y(X_{\Delta_G},\G'')$.\\

\begin{definition}
Let $a\in\Bl_\G(\LL)$. By \Cref{remark:BlG}, to $a$ is associated to a nested set $(S,x)$ where $S\subset \G$ and $x\in \vee S $. Let $D_a$ be the strata in $Y(X_\Delta,\G)$ corresponding to the nested set $a$.
\end{definition}

Our main theorem is the following:
\begin{theorem} \label{thm:main}
The cohomology ring $H^*(Y(X_\Delta,\G);\ZZ)$ is isomorphic to $R(X_\Delta,\G)$ by sending $t_a$ to the cohomology class $[D_a]$.
\end{theorem}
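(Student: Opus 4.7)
The plan is to combine the two strands of argument announced in the introduction. First, I would construct a well-defined ring homomorphism $\Phi\colon R(X_\Delta,\G)\to H^*(Y(X_\Delta,\G);\ZZ)$ sending $c_r\mapsto \pi^*(c_r)$ (where $\pi\colon Y(X_\Delta,\G)\to X_\Delta$) and $t_a\mapsto [D_a]$. Second, using the Gröbner basis of Section~\ref{sec:additive_basis}, I would show both $\ZZ$-modules have the same (finite) rank in each degree and that $R(X_\Delta,\G)$ is torsion-free, so that a surjective $\Phi$ must be an isomorphism.

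For the well-definedness I would verify the three families of relations of Definition~\ref{def:cohomologyring}. Relation~(i) follows because when $r\notin\Ann\Gamma_{\pi(a)}$ the toric divisor $D_r$ does not meet $\overline{\KK}_{\pi(a)}$ (its fan being $\Delta\cap\Ann\Gamma_{\pi(a)}$), hence the pullback $\pi^*(c_r)$ vanishes on $D_a$. Relation~(iii) is the usual transversality/intersection formula for strata in a simple normal crossings configuration, together with the combinatorial description of $D_a\cap D_b$ via the operations $a\wedge b$ and $a\vee b$ in $\Bl_\G\LL$ (so that $D_a\cdot D_b = D_{a\wedge b}\cdot\sum_{c\in a\vee b}D_c$). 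Relation~(ii), the Chern polynomial relation $Q^b_a(\G)$, is the geometric heart of the argument: it should be deduced from Theorem~\ref{thm:blowup} and Lemma~\ref{lemma:normal_bundle} applied at the blow-up step where $G$ is created, with $\tau_G$ playing the role of $-[E]$ and the product in~\eqref{eq:rel_chern_poly} realized as an explicit lift of the Chern polynomial of the normal bundle to $\overline{\KK}_G$ (along the lines of the ``good lifting'' recipe of \cite{DeConciniGaiffi2019, PagariaMoci}).

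Surjectivity I would prove by induction on $|\G|$ using deletion-contraction. Let $G$ be the last element of $\succ_\G$, set $\G'=\G\setminus\{G\}$, and recall $Y(X_\Delta,\G) = \Bl_{\tilde{\KK}_G}Y(X_\Delta,\G')$ with $\tilde{\KK}_G\simeq Y(X_{\Delta_G},\G'')$ by the De Concini--Gaiffi theorem. By induction both the cohomology rings of $Y(X_\Delta,\G')$ and of $\tilde{\KK}_G$ lie in the image of $\Phi$ (for the latter via the inductive presentation of the wonderful model of the contracted arrangement). Since the inclusion $\tilde{\KK}_G\hookrightarrow Y(X_\Delta,\G')$ need not induce a surjection in cohomology when $\G$ is not well-connected, one genuinely needs Theorem~\ref{thm:blowup} in place of Keel's original statement: from \eqref{eq:additive_cohomology} the cohomology of the blow-up is generated additively by pullbacks from $H^*(Y(X_\Delta,\G'))$ together with classes $[E]^k\cdot j_*p^*(z)$, all of which lie in the image of $\Phi$.

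For the rank comparison, the Gröbner basis of Section~\ref{sec:additive_basis} produces a free $\ZZ$-basis of $R(X_\Delta,\G)$ indexed by admissible functions (which simultaneously shows torsion-freeness); the same admissible functions index an additive basis of $H^*(Y(X_\Delta,\G);\ZZ)$, generalizing \cite{GaiffiPapiniSiconolfi2022}. A surjection between free $\ZZ$-modules of the same finite graded rank is an isomorphism. I expect the main obstacle to be relation~(ii) in the well-definedness step: matching $\tau_G^{s-1}$ and the sum $\sum_{c\gtrdot a,\,\pi(c)\geq\pi(b)} t_c$ with the geometric Chern polynomial on the blow-up tower requires carefully tracking how each intermediate exceptional divisor contributes to the strata $D_c$ and how the iterated application of Lemma~\ref{lemma:normal_bundle}(1) deforms the Chern polynomial by substitution $t\mapsto t-[E]$ at each step. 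This is the place where the non-well-connected hypothesis makes the classical Keel approach break down and where the refined blow-up lemma of Section~\ref{sec:geometric_blowups} does the essential work.
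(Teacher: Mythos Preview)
Your proposal is correct and follows essentially the same architecture as the paper: construct the map, prove surjectivity by deletion--contraction induction via \Cref{thm:blowup}, and conclude by comparing Hilbert series through the admissible-monomial count of \Cref{sec:additive_basis}. Two organizational points are worth noting. First, the paper does not verify the three relation families directly in $H^*(Y(X_\Delta,\G))$; instead it packages well-definedness and surjectivity into a single induction (\Cref{prop:well_surj}) by defining an auxiliary injection $\varphi\colon B[(\Bl_\G\LL)^+]\hookrightarrow B[(\Bl_{\G'}\LL')^+]\otimes B''[(\Bl_{\G''}\LL'')^+][t_g]$ and checking that $\varphi(I_\G)$ lands in $J+I'\otimes 1+1\otimes I''$, so that relations (i)--(iii) are reduced, case by case, either to the inductive hypothesis for $\G'$ or $\G''$ or to the ideal $J$ of \Cref{thm:blowup}; your relation~(ii) for an \emph{intermediate} $G$ thus really does require the inductive setup, not a one-shot application of \Cref{lemma:normal_bundle}. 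Second, your phrasing ``the Gr\"obner basis produces a free $\ZZ$-basis of $R$'' and ``the same admissible functions index a basis of $H^*$'' is slightly circular as stated: the paper proves \Cref{thm:Grobner_Basis} \emph{simultaneously} with \Cref{thm:main}, using only the a priori inequality $\Hilb(\ZZ[c_r,t_a]/\In(\alpha))\geq \Hilb(R)$ together with the recursion of \Cref{lemma:recursion_gamma} (matching the blowup recursion for $\Hilb(H^*)$) and the surjection, then deducing torsion-freeness from \Cref{remark:torsion_free} once all Hilbert series coincide.
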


\begin{lemma}
\label{lemma:pullback}
    In the setting above, we have
    \begin{align*}
        \iota^*(c_r)=  \begin{cases} c_r & \textnormal{if } r \in \Ann(\Gamma_{G}) \\ 0, & \textnormal{otherwise,} \end{cases} \quad \quad \iota^*(t_a) =  \sum_{c \in a \vee g} t_c.
    \end{align*}
\end{lemma}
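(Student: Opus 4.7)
The plan is to handle the two formulas by separate geometric arguments.

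For the pullback of the toric classes, I use the factorization
\[\overline{\KK}_G'\hookrightarrow Y(X_\Delta,\G')\xrightarrow{\pi} X_\Delta,\]
where $\pi$ is the canonical projection from the wonderful model to the ambient toric variety. The classes $c_r$ are by construction pulled back from $H^*(X_\Delta)$ along $\pi$, so $\iota^*(c_r)$ is computed by restricting $c_r$ along the composition $\pi\circ\iota$, which surjects onto the toric subvariety $\overline{\KK}_G\subset X_\Delta$ whose fan is $\Delta_G=\Delta\cap\Ann(\Gamma_G)$. The classical description of the restriction of a toric divisor class to an invariant toric subvariety (cf.\ \cite{FultonToricVarieties} together with \Cref{thm:cohomology_toric}) sends $c_r$ to itself when the ray $r$ lies in the subfan $\Delta_G$, equivalently when $r\in\Ann(\Gamma_G)$, and to zero otherwise; this yields the first identity.

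For the pullback of the stratum classes, I interpret $t_a=[D_a]$ as the fundamental class of the stratum of $Y(X_\Delta,\G')$ indexed by the $\G'$-nested set $a=(S,x)$, and compute $\iota^*[D_a]$ as the cycle class of the intersection $D_a\cap\overline{\KK}_G'$ inside $\overline{\KK}_G'$. The key geometric input is the decomposition of $\overline{\KK}_x\cap\overline{\KK}_G$ inside $X_\Delta$: by the equal-sign property, this intersection is a disjoint union of irreducible toric subvarieties $\overline{\KK}_y$ indexed by the layers $y\in x\vee G$ in $\LL$. After performing the iterated blowups that produce $Y(X_\Delta,\G)$, each such component lifts to the stratum attached to the $\G$-nested set $c=(S\cup\{G\},y)$, and through the identification $\Bl_\G(\LL)_{\geq g}\simeq\Bl_{\G_G}(\LL_{\geq G})$ provided by \Cref{lemma:building_in_contr} these are precisely the strata of $\overline{\KK}_G'\simeq Y(X_{\Delta_G},\G_G)$ whose classes $t_c$ appear in the statement, ranging over $c\in a\vee g$.

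The main obstacle is to verify that the intersection $D_a\cap\overline{\KK}_G'$ is clean and that each irreducible component contributes with multiplicity one. This can be checked locally around a generic point of each component, where the normal-crossings structure of the wonderful model around $D_a$, the nestedness of $S\cup\{G\}$, and the equal-sign hypothesis on $X_\Delta$ together guarantee transversal meeting of the relevant divisors. Once cleanness is established, the clean-intersection formula directly yields $\iota^*[D_a]=\sum_{c\in a\vee g}[D_c]$, which is the desired second identity.
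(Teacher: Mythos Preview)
Your proposal is correct and follows essentially the same approach as the paper: both arguments reduce $\iota^*(c_r)$ to the classical toric restriction along $X_{\Delta_G}\hookrightarrow X_\Delta$, and both compute $\iota^*(t_a)$ by identifying $D_a\cap\overline{\KK}_G'$ with the disjoint union of the strata $D_c$ for $c\in a\vee g$ via a transversality argument. The paper's version is terser---it asserts transversality directly in $Y(X_\Delta,\G')$ rather than tracing back through $X_\Delta$, and it separates out the case $a\vee g=\emptyset$ explicitly---but the underlying geometry is the same.
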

\begin{proof}
    The first statement can be checked before the sequence of blowups prescribed by $\G'$ and it follows from the general theory of toric varieties applied to the map $X_{\Delta_G} \hookrightarrow X_\Delta$, c.f.\ \cite[Proposition 5.3]{DeConciniGaiffi2019}.
    For the second claim, if $a \vee g \neq \emptyset$ then the subvarieties $D_a$ and $\overline{\mathcal{K}}_G'$ intersect transversally and so 
    \[ \iota^*(t_a) = -\iota^*([D_a]) = -[D_a \cap \overline{\mathcal{K}}_G'] = \sum_{c \in a \vee g} t_c.\]
    Otherwise $a \vee g = \emptyset$, hence the intersection $D_a \cap \overline{\mathcal{K}}_G'$ is empty in $Y(X,\G')$. Therefore, $\iota^*(t_a) =0$.
\end{proof}

\begin{lemma} \label{lemma:pushforward}
Let $\iota$ be the embedding of $\overline{\mathcal{K}}_G' \simeq Y(X_{\Delta_G},\G'')$ in $Y(X,\G')$ and $b$ be a nested set such that $b \geq g \in \Bl_{\G}(\LL)$.
Then:
    \[\iota_*(t_b) = 
    %\Bigl( \sum_{\substack{c \gtrdot a\\ \pi(c) >\pi(b)}} t_c\Bigr) 
    t_a 
    \Bigl( \sum_{H \in \G_{>G}} t_{ \{H\}} \Bigr)^d\] 
    if $ G \not \in F(\G,b)$ and 
     \[\iota_*(t_b) = 
    \Bigl( \sum_{\substack{ c \gtrdot a\\ \pi(c) >\pi(b)}} t_c\Bigr) \Bigl( \sum_{H \in \G_{>G}} t_{ \{H\}} \Bigr)^{d-1} + t_a \prod_{j=1}^{d}
    \left(
    %\left( \sum_{H \in \G_{>\pi(b)}} t_{\{H\}} \right)
    - \sum_{r\in\mathcal{R}} \min\left(0, \langle\chi_j, r\rangle\right) c_r \right)\]
    if $ G \in F(\G,b)$ where $a\in \Bl_{\G'}(\LL)$ is such that $b \in a \vee g$ and  $\{\chi_i\}_{i=1, \dots, d}$ is a basis of $\Gamma_{\pi(b)}/\Gamma_{\pi(a)}$.
\end{lemma}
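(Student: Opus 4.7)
The proof rests on two ingredients: the projection formula $\iota_*(\iota^*\alpha \cdot \beta) = \alpha \cdot \iota_*\beta$, together with an inductive computation of the class $[\overline{\mathcal{K}}_G']$ and the Chern polynomial of its normal bundle through the iterated blowups defining $Y(X_\Delta,\G')$. In both cases I fix the unique $a \in \Bl_{\G'}(\LL)$ with $b \in a \vee g$: writing $b$ as the nested pair $(S,x)$, this is $a = (S \setminus \{G\}, x)$ when $G \notin F(\G,b)$, and $a = (S \setminus \{G\}, y)$ with $y = \bigvee(S\setminus\{G\}) < x$ when $G \in F(\G,b)$.

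In the first case, a combinatorial check in $\Bl_\G \LL$ gives $a \vee g = \{b\}$: since $G \notin F(\G,x)$, the set $S \setminus \{G\}$ already contains a $\G$-factor of $x$ strictly above $G$, which forces any nested upper bound of $a$ and $g$ to have top $x$. Lemma \ref{lemma:pullback} then yields $\iota^*(t_a) = t_b$, and the projection formula gives $\iota_*(t_b) = t_a \cdot [\overline{\mathcal{K}}_G']$. The identification $[\overline{\mathcal{K}}_G'] = (\sum_{H \in \G_{>G}} t_{\{H\}})^d$ follows by induction on the blowup sequence from $X_\Delta$ to $Y(X_\Delta,\G')$: starting from $[\overline{\mathcal{K}}_G] = \prod_{j=1}^d(-\sum_r \min(0,\langle\chi_j,r\rangle)c_r)$ in $H^*(X_\Delta)$ via Theorem \ref{thm:cohomology_toric}, each blowup along $\overline{\mathcal{K}}_H$ with $H > G$ shifts the Chern polynomial of $N_{\overline{\mathcal{K}}_G} X_\Delta$ by $-[E_H]$ through Lemma \ref{lemma:normal_bundle}(1), while blowups along centers incomparable to $G$ leave it unchanged by Lemma \ref{lemma:normal_bundle}(2); the accumulated shifts assemble into the stated sum, and the top Chern class produces the $d$-th power.

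In the second case, the join $a \vee g$ consists of the nested sets $(S,x')$ with $x' \in y \vee G$ in the local lattice $\LL$, so Lemma \ref{lemma:pullback} gives $\iota^*(t_a) = \sum_{c \in a \vee g} t_c$, and the projection formula produces $\sum_{c \in a \vee g} \iota_*(t_c) = t_a \cdot [\overline{\mathcal{K}}_G']$. For each $c \in a \vee g \setminus \{b\}$ I would verify $G \notin F(\G,c)$ by inspecting the factor structure at $x' \in y \vee G$, so $\iota_*(t_c)$ falls under the Case~1 formula. Subtracting these Case~1 contributions and invoking Theorem \ref{thm:blowup} to express $[\overline{\mathcal{K}}_G']$ through the normal bundle Chern polynomial $P_{t_b}(t)$ isolates $\iota_*(t_b)$ and matches the stated expression, in which $(\sum_{H>G} t_{\{H\}})^{d-1}$ arises as a lower-degree term of the Chern polynomial expansion.

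The main obstacle is the combinatorial enumeration of the covers $c \gtrdot a$ with $\pi(c) > \pi(b)$ in Case~2, together with showing that the subtraction of Case~1 contributions produces exactly the product $(\sum_{c \gtrdot a,\, \pi(c) > \pi(b)} t_c)(\sum_{H>G} t_{\{H\}})^{d-1}$ appearing in the formula. This requires tracing how the Chern polynomial of $N_{\overline{\mathcal{K}}_G'} Y(X_\Delta,\G')$ interacts with the join structure $y \vee G$ in the local lattice, which is the step where the poset-theoretic and Chern-theoretic computations must align precisely.
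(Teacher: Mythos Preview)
Your Case~1 argument is almost right but contains a slip: the class $[\overline{\mathcal{K}}_G'] = \iota_*(1)$ is \emph{not} equal to $(\sum_{H\in\G_{>G}} t_{\{H\}})^d$. Tracking the strict transform through the blowups gives
\[
\iota_*(1)=\prod_{j=1}^d\Bigl(\sum_{H\in\G_{>G}} t_{\{H\}} - \sum_{r\in\mathcal{R}}\min(0,\langle\chi_j,r\rangle)c_r\Bigr)
=\Bigl(\sum_{H\in\G_{>G}} t_{\{H\}}\Bigr)^d+\prod_{j=1}^d\Bigl(-\sum_r\min(0,\langle\chi_j,r\rangle)c_r\Bigr),
\]
where the cross terms vanish by relation~(i). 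The pure $c_r$--term survives in general; it only dies after multiplication by $t_a$, because $\pi(a)\geq G$ forces $c_r t_a=0$ for every $r\notin\Ann\Gamma_G$. So your conclusion in Case~1 is correct, but you should not claim the identification of $[\overline{\mathcal{K}}_G']$ itself with the $d$-th power.

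Your Case~2 strategy has a genuine gap. The claim that $G\notin F(\G,c)$ for every $c\in(a\vee g)\setminus\{b\}$ is false. Concretely, take $\G=\{G,H,K\}$ in $(\CC^*)^3$ with $G=\{t_1=1\}$, $H=\{t_1=t_2=1\}$, $K=\{t_3^2=t_1\}$; then $a=(\{K\},K)$ and $g=(\{G\},G)$ have $a\vee g=\{c_1,c_2\}$ with $c_i=(\{K,G\},x_i)$ for the two components $x_i$ of $K\cap G$, and $G\in F(\G,x_i)$ for \emph{both} $i$. So with $b=c_1$, the other element $c_2$ also falls under Case~2, and your subtraction becomes circular: the projection formula $\sum_{c\in a\vee g}\iota_*(t_c)=t_a\cdot\iota_*(1)$ relates several unknown Case~2 quantities without determining any one of them.

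The paper avoids this entirely by a different factorisation. Instead of pulling back $t_a$ along $\iota$, it uses the square
\[
\begin{tikzcd}
D''_b \arrow[r,"\kappa"] \arrow[d,"f"'] & D'_a \arrow[d,"h"] \\
\overline{\mathcal{K}}_G' \arrow[r,"\iota"'] & Y(X,\G')
\end{tikzcd}
\]
and writes $\iota_*(t_b)=\iota_*(-f_*(1))=-h_*\kappa_*(1)$. The inclusion $\kappa\colon D''_b\hookrightarrow D'_a$ is again of the type handled in the base case $b=g$ (now inside the stratum $D'_a\simeq Y(X_{\Delta_{\pi(a)}},\G_{\pi(a)})$), so $\kappa_*(1)$ is computed directly; one then pushes forward along $h$ using $h_*(t_e)=t_e$ and the pullback identities for the $c_r$'s. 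This produces the single term $\iota_*(t_b)$ without ever touching the other elements of $a\vee g$, which is precisely what your approach cannot do.
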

\begin{proof}
Firstly, consider the case $\iota_*(1)$ (i.e.\ $b=g$).
The fundamental class of $\overline{\mathcal{K}}_G \subset X$ is 
\[
\prod_{j=1}^{d}\left(- \sum_{r\in\mathcal{R}} \min\left(0, \langle\chi_j, r\rangle\right) c_r \right)
\]
where $\{\chi_i\}_i$ is a basis of $\Gamma_{\pi(g)}$, see \cite[eq.~(2)]{DeConciniGaiffi2019}.
By classical results on blowups we have
\[ \iota_*(1) = \prod_{j=1}^{d}\left(\Bigl( \sum_{H \in \G_{>G}} -[D_H] \Bigr) - \sum_{r\in\mathcal{R}} \min\left(0, \langle\chi_j, r\rangle\right) c_r \right). \]
Recall that $t_{\{H\}}$ is defined as $-[D_H]$ and that $t_hc_r \min(0,\langle \chi_j,r\rangle)=0$ for any $h=\{H\}$ with $H \geq G$ and any $r \in \mathcal{R}$; this completes the case $b=g$.

Now, suppose that $\pi(a) > G$; in this case $t_b=\iota^*(t_a)$ because $b=a \vee g$.
It follows that
\[ \iota_*(t_b)= \iota_*(\iota^*(t_a)) = \iota_*(1) t_a = t_a 
    \Bigl( \sum_{H \in \G_{>G}} t_{ \{H\}} \Bigr)^d \]
because $\Ann \Gamma_{\pi(a)} \subset \Ann \Gamma_G$ and so $\min\left(0, \langle\chi_j, r\rangle\right) c_r t_a=0$ for all $r\in \mathcal{R}$ and all $j=1, \dots, d$. 

    Finally, consider the following commutative diagram
    \begin{center}
    \begin{tikzcd}
D''_b \arrow[d, "f"'] \arrow[r, "\kappa"] & D'_a \arrow[d, "h"] \\
\overline{\mathcal{K}}_G' \arrow[r, "\iota"]            & {Y(X,\G')}        
    \end{tikzcd}       
    \end{center}
    where all maps are inclusions.
    By the property of blowup we have $D_a' \simeq Y(X_{\Delta_{\pi(a)}},\G_{\pi(a)})$.
    Notice that the codimension of $D''_b$ in $D'_a$ is exactly $\rk(G)$. Moreover, since $b$ is $\G$-nested and $\pi(a) \not \geq G$ it follows that $\Gamma_{\pi(b)}/\Gamma_{\pi(a)}$ has rank $d$.
    We have
    \[\iota_*(t_b) =\iota_*(-f_*(1)) = -h_* (\kappa_*(1))\]
    and by the first case
    \begin{align*}
        \kappa_*(1)&= \prod_{j=1}^{d} \Bigl( \Bigl( \sum_{\substack{c \gtrdot a \\ \pi(c)> \pi(b)}} t_{c} \Bigr)- \sum_{r\in\mathcal{R}_a} \min\left(0, \langle\chi_j, r\rangle\right) c_r \Bigr) \\
        &= \Bigl( \sum_{\substack{c \gtrdot a \\ \pi(c)> \pi(b)}} t_{c} \Bigr)^d + \prod_{j=1}^{d} \sum_{r\in\mathcal{R}} -\min\left(0, \langle\chi_j, r\rangle\right) c_r 
    \end{align*}
    where $\{\chi_i\}_{i=1, \dots, s}$ form a basis of $\Gamma_{\pi(b)}/\Gamma_{\pi(a)}$.

    We have
    \begin{align*}
        h_* \Bigl( \prod_{j=1}^{d} \sum_{r\in\mathcal{R}_a} -\min\left(0, \langle\chi_j, r\rangle\right)  c_r \Bigr) &= h_* \Bigl( h^* \Bigl(\prod_{j=1}^{d}  \sum_{r\in\mathcal{R}} -\min\left(0, \langle\chi_j, r\rangle\right) c_r \Bigr) \Bigr) \\
        &= t_a \prod_{j=1}^{d}  \sum_{r\in\mathcal{R}} -\min\left(0, \langle\chi_j, r\rangle\right) c_r
    \end{align*} 
    where by abuse of notation we write $c_r$ both for the element in $H^*(D'_a)$ and the element in $H^*(Y(X,\G'))$ because $h^*(c_r)=c_r$.

    On the other hand we have
    \begin{align*}
        h_* \Bigl( \Bigl( \sum_{\substack{c \gtrdot a \\ \pi(c)> \pi(b)}} t_{c} \Bigr)^d \Bigr) &= \sum_{\substack{e \gtrdot a \\ \pi(e)> \pi(b)}} h_* \Bigl(t_e \Bigl( \sum_{\substack{c \gtrdot a \\ \pi(c)> \pi(b)}} t_{c} \Bigr)^{d-1} \Bigr) \\
        &= \sum_{\substack{e \gtrdot a \\ \pi(e)> \pi(b)}} h_* \Bigl(t_e \Bigl( \sum_{\substack{c \gtrdot a \\ \pi(c)> G \\ c \not \in a \vee g}} t_{c} \Bigr)^{d-1} \Bigr) \\
        &= \sum_{\substack{e \gtrdot a \\ \pi(e)> \pi(b)}} h_* \Bigl(t_e h^*\Bigl( \sum_{H \in \G_{>G}} t_{\{H\}} \Bigr)^{d-1} \Bigr) \\
        &= \Bigl( \sum_{H \in \G_{>G}} t_{\{H\}} \Bigr)^{d-1}  \sum_{\substack{e \gtrdot a \\ \pi(e)> \pi(b)}} h_* (t_e ) \\
        &= \Bigl( \sum_{H \in \G_{>G}} t_{\{H\}} \Bigr)^{d-1}  \sum_{\substack{e \gtrdot a \\ \pi(e)> \pi(b)}} t_e 
    \end{align*}
This completes the proof.
%    \begin{align*}
%        \iota_*(t_b) &=\iota_*(-f_*(1)) = -h_* (g_*(1)) \\
%        &= -h_* \Bigl( \prod_{j=1}^{s} \Bigl( \Bigl( \sum_{H \in \G_{>\pi(b)}} -t_{\{H\}} \Bigr)- \sum_{r\in\mathcal{R}} \min\left(0, \langle\chi_j, r\rangle\right) c_r \Bigr) \Bigr) \\
%        &= -h_* (h^*(\prod_{j=1}^{s}\left(- \sum_{r\in\mathcal{R}} \min\left(0, \langle\chi_j, r\rangle\right) c_r \right))) \\
%        &= -h_*(1) \prod_{j=1}^{s}\left(- \sum_{r\in\mathcal{R}} \min\left(0, \langle\chi_j, r\rangle\right) c_r \right) \\
%        &= t_a \prod_{j=1}^{s}\left(- \sum_{r\in\mathcal{R}} \min\left(0, \langle\chi_j, r\rangle\right) c_r \right) 
%    \end{align*}
%    where we used \cite[eq.~(2)]{DeConciniGaiffi2019} to make $g_*(1)$ explicit and the identity $h^*(c_r)=c_r$ for any $r \in \Ann(\Gamma_a)$.
\end{proof}

For our embedding $\iota \colon \overline{\mathcal{K}}_G' \hookrightarrow Y(X,\G')$ the polynomial $P_z(t)$ of eq.\ \eqref{eq:Chern_poly} using \Cref{lemma:normal_bundle} specializes to
\begin{equation}  \label{eq:Chern_poly_Y}
P_z(t)= \sum_{i=1}^d 1\otimes z \binom{d}{i} \Bigl( \sum_{H \in \G_{>G}} t_h \Bigr)^{d-i} t^{i}  + \iota_*(z) \otimes 1 
\end{equation}
where $\iota_*(z)$ is computed in \Cref{lemma:pushforward} in the case $z=t_b$.\\
%(Fare anche il caso $z$ generico con push-pull?) \todo{cosa fare?}

%Now we fix a total order on $\G=\{G_1, \dots, G_m\}$ and we define $\G' = \G \setminus \{G_m \} $, $\LL' \subset \LL$ be the poset of flats of $\G'$, $\G''$ as in \Cref{lemma:building_in_contr} and $\LL''= \LL_{\geq G_m}$.
%Notice that $\G'$ and $\G''$ are building in $\LL'$ and $\LL''$ respectively.

%With this in mind we define $D_a=(\cap_{G\in S}D_{G})\cap D_{x}$, where $D_{G}$ is the strict transform of the exceptional divisor for the blowup at $\overline{\mathcal{K}}_{G}$.

\section{An additive basis}
\label{sec:additive_basis}

%A monomial basis for the cohomology ring of smooth, projective toric varieties is given in Chapter 5 of \cite{FultonToricVarieties}.
%Our good toric varieties satisfy these properties, so we have a monomial basis for $H^*(X;\mathbb{Z})$ [...] \todo{questo periodo è fuori contesto}

In this section we describe a monomial basis of $R(X,\mathcal{G})=H^*(Y(X,\mathcal{G}),\mathbb{Z})$ (\Cref{thm:Grobner_Basis}).
A similar result was proven by \cite{GaiffiPapiniSiconolfi2022} with the further hypothesis of $\mathcal{G}$ being  well-connected. Here we introduce two main ingredients which are the adaptation of the definition of ``admissible function'' to our setting and to the description of a Gr\"obner basis for the ideal $I_{\G}+L_{\Delta}$. The conclusion of the proof of \Cref{thm:Grobner_Basis} can be found in next Section (\ref{sec:main_proof}).

\subsection{A recursion with admissible function}
\label{subsec:admissible_monomial}

In this subsection we define admissible monomials as elements of
$R(X_{\Delta},\mathcal{G})$. This will allow us to define the set $\mathcal{B}(X_{\Delta},\G)$ which will be proven to be an additive basis of $R(X_{\Delta},\mathcal{G})$. The main result is \Cref{lemma:recursion_admissible} that proves a crucial recursion for the proof of \Cref{lemma:H>R}.

\begin{definition}\label{def:f_m}
Given a monomial $m\in R(X_{\Delta},\mathcal{G})$ of the following form:
\[
m=t^{e_1}_{a_1}t^{e_2}_{a_2}\ldots t^{e_k}_{a_k}
\]
where $a_1<\ldots<a_k$ in $\Bl_{\mathcal{G}}(\mathcal{L})$.
We define the function $f_{m} \colon \mathcal{G} \rightarrow \mathbb{N}$ where $a_i=(S_i,X_i)$ as
\[
f_m(G)=\sum_{G\in S_i}e_i.
\]
\end{definition}

Notice that the support of $f_m$ is $S_k\subset \mathcal{G}$ when $m \neq 1$ while $\supp(f_1)= \emptyset$.

\begin{remark}\label{rem:monomial}
Let $m$ be a monomial as in Definition \ref{def:f_m}, let $f_m$ the function associated to it, then

\[
m=t_{a_k}\prod_{G'\in \supp(f_m)}t_{g'}^{f_m(G')-1}=t_{a_k}m'
\]
furthermore, if $t_{g_1}t_{g_2}|m'$ and $G_1\vee G_2=\pi(b)$ in $\mathcal{L}_{\leq \pi(a_k)}$, then 
\[
m=t_{a_k}t_b(m'/t_{g_1}t_{g_2}).
\]
\end{remark} 

%The monomials considered in \Cref{def:admissible} generate the ring $R(X_\Delta,\G)$ as $B$-module.

\begin{lemma}
\label{LCflags}
Any monomial in $R(X_{\Delta},\G)$ can be written as $B$-linear combination of monomials of the form $t_{a_1}t_{a_2}\cdots t_{a_k}$ where $a_1 \leq \cdots \leq a_k$ is a flag of nested sets.
\end{lemma}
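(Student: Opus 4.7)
The plan is to apply relation (iii) of \Cref{def:cohomologyring} iteratively, replacing every incomparable pair of indices, until the remaining $t$-factors are totally ordered. The real issue is termination: I will use an integer-valued potential that is bounded above and strictly increases at every nontrivial rewriting step.

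Write the given monomial as $c \cdot t_{a_1} \cdots t_{a_k}$ with $c \in B$, and set
$$\mu(m) := \sum_{i=1}^{k} \rk(a_i)^2.$$
Let $r_{\max}$ be the maximum rank in $\Bl_\G \LL$; then $\mu(m) \leq k\, r_{\max}^2$. The number $k$ of $t$-factors is preserved by relation (iii), which rewrites $t_{a_i} t_{a_j}$ as a sum of terms of the shape $t_{a_i \wedge a_j} t_c$.

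The argument then proceeds by induction on $N(m) := k\, r_{\max}^2 - \mu(m) \in \ZZ_{\geq 0}$. If the $a_i$ already form a chain in $\Bl_\G \LL$ (hence a flag of nested sets by \Cref{remark:BlG}), there is nothing to prove. Otherwise pick $i \neq j$ with $a_i, a_j$ incomparable and apply (iii):
$$t_{a_i} t_{a_j} \;=\; t_{a_i \wedge a_j} \sum_{c \in a_i \vee a_j} t_c.$$
If $a_i \vee a_j = \emptyset$ then $m = 0$ and we are done; otherwise the remark following \Cref{def:cohomologyring} ensures that $a_i \wedge a_j$ is unique. Setting $p = \rk(a_i \wedge a_j)$, $q = \rk(a_i)$, $r = \rk(a_j)$, $s = \rk(c)$, homogeneity of (iii) (equivalently, local booleanness of $\Bl_\G \LL$) gives the rank identity $p + s = q + r$, while incomparability of $a_i, a_j$ forces $p < q, r$ and hence $s > q, r$. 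Strict convexity of $x \mapsto x^2$ then yields
$$p^2 + s^2 - q^2 - r^2 \;=\; 2(p-q)(p-r) \;>\; 0,$$
so every new monomial on the right-hand side has strictly larger $\mu$, equivalently strictly smaller $N$. The inductive hypothesis applied to each term closes the argument.

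The only genuine obstacle is identifying a well-founded measure. The most naive candidate, namely the number of incomparable pairs of indices, does not obviously decrease, because a new element $c \in a_i \vee a_j$ can be incomparable with some of the remaining $a_l$'s. The square-of-rank potential sidesteps this by pitting convexity of $x \mapsto x^2$ against the rank identity $p + s = q + r$ that is enforced by the grading on $R(X_\Delta,\G)$.
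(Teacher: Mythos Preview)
Your argument is correct and takes a genuinely different route from the paper. The paper inducts on precisely the quantity you dismiss as ``the most naive candidate'': the number of incomparable pairs among the $a_i$. Your skepticism is well placed --- the paper's case analysis does not fully treat the situation where both $a_1,a_2<a_j$ and $x\in a_1\vee a_2$ happens not to be the join computed inside $[\hat0,a_j]$; then $x$ is incomparable with $a_j$ and the count does not drop (this already occurs in the paper's own running example with $a_1=\{a\}$, $a_2=\{b\}$, $a_j=(\{a,b\},L_1)$, $x=(\{a,b\},L_2)$). One can patch the paper's induction by observing that such terms satisfy $x\vee a_j=\emptyset$ in the locally boolean poset $\Bl_\G\LL$ and hence vanish by relation (iii), but your convexity potential bypasses the issue entirely and needs no such side argument. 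One small point: when $a_i\wedge a_j=\hat0$ the symbol $t_{\hat0}$ is not among the declared generators; read this with the convention $t_{\hat0}=1$ (or carry it as a dummy factor of rank $0$), and your inequality $p^2+s^2>q^2+r^2$ still holds with $p=0$, so termination is unaffected.
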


\begin{proof}
We prove the statement by induction on the number $n$ of incomparable pairs of nested sets in the generic monomial $t_{a_1}t_{a_2}\cdots t_{a_k}$.
If $n=0$ the monomial is a flag monomial.
If $n>0$, assume without loss of generality that $a_1$ and $a_2$ are incomparable. Then, apply relation (3) in \Cref{def:cohomologyring} to obtain the equation
$$t_{a_1}t_{a_2}t_{a_3}\cdots t_{a_k} = \left(t_{a_1\wedge a_2}\sum_{x\in a_1\vee a_2}t_x\right) t_{a_3}\cdots t_{a_k}.$$
Now each monomial in the right hand side of the equation has strictly less incomparable pairs than the left hand side.
Indeed, for some $a_j$ with $j\neq 1,2$ it is enough to observe that:
\begin{itemize}
\item if $a_1 < a_j$, then $a_1\wedge a_2 < a_j$
\item if $a_1 > a_j$, then $a_j < x$ for any $x\in a_1\vee a_2$,
\end{itemize}
and similarly on $a_2$. If $a_j$ is incomparable with both $a_1$ and $a_2$, then it is also incomparable with $a_1\wedge a_2$ and $x$.
\end{proof}

We extend the definition of \emph{admissible monomial} (of \emph{admissible function}) from \cite{yuzvinsky} and \cite{gaiffiselecta}:

\begin{definition}\label{def:admissible}
A monomial $m\in R(X_\Delta,\mathcal{G})$ is \emph{admissible} if 
\[
m=t^{e_1}_{a_1}t^{e_2}_{a_2}\ldots t^{e_k}_{a_k}\text{ with }a_1<\ldots<a_k
\]
 and for every $G\in\supp(f_m)$
 \[
f_m(G)< \rk(G) -\rk(M_{S_k,G}),
 \]
 where $M_{S_k,G}$ is the element $\bigvee (S_k)_{<G}$ in the sublattice $[\hat{0},G]$.

\end{definition}

By \Cref{thm:cohomology_toric} $H^*(X_{\Delta},\mathbb{Z})\simeq \mathbb{Z}[c_r]_{r\in\mathcal{R}}/L_{\Delta}$, let $\beta_\Delta$ be a 
%universal
Gr\"obner basis 
%\footnote{A universal Gr\"obner basis for an ideal $I\subset R$ is a set of monomials in $I$ which is a Gr\"obner basis according to every possible monomial order.}
for $L_{\Delta}$ and let $\mu_{\Delta}$ be the Gr\"obner escalier of the ideal $L_\Delta$, i.e.\ the set of irreducible monomials with respect to $\beta_\Delta$.
The set $\mu_{\Delta}$ is a monomial basis for the cohomology ring $H^*(X_{\Delta},\mathbb{Z})$. 
We have seen that one can associate with any $a \in \Bl_\G(\LL)$ a fan $\Delta_a:=\Delta\cap\Ann(\Gamma_{\pi(a)})$ which is a subfan of $\Delta$, 
We define the Gr\"obner basis $\beta_{\Delta_a}$ and the monomial basis $\mu_{\Delta_a}$ as the analogous of $\beta_{\Delta}$ and $\mu_{\Delta}$.
%and a map  between toric varieties 
%$j_a \colon X_{\Delta_a}\hookrightarrow X_{\Delta}$. 
%The fact that $\beta_{\Delta}$ is a universal Gr\"obner basis implies that 
%$\beta_{\Delta_a}=\beta_\Delta \cap \mathbb{Z}[c_r,r\in %\Ann(\Gamma_{\pi(a)})]$ is a Gr\"obner basis for the ideal $L_{\Delta}\cap \mathbb{Z}[c_r,r\in \Ann(\Gamma_{\pi(a)})]=L_{\Delta_a}$.

%Let $j^*_{a}$ be the projection map $j^*_a\colon H^*(X_{\Delta};\ZZ) \rightarrow H^*(X_{\Delta_a};\ZZ)$ induced by the above inclusion, by \Cref{prop:restriction_cohomology} we know that $\text{Ker}(j^*_a)$ is generated by the variables $c_r$, $r\notin \Ann(\Gamma_{\pi(a)})$.
%Let $\mu_{\Delta_a}$ be the subset of $\mu_{\Delta}$ of monomials that are uniquely in terms of the variables $c_r$.
%Then $j^*_a(\mu_{\Delta_a})$ is a basis of $H^*(X_{\Delta_a})$.

%We denote by $\Theta(a)$ a minimal set of elements of $H^*(X_{\Delta};\ZZ)$ such that their image through $\rho_a$ is a Groebner basis of 
%$H^*(X_{\Ann(\Gamma_{\pi(a)})};\ZZ)$.

We define the sets $\mathcal{B}(X_\Delta,\G), \mathcal{AM}(X_\Delta,\G)\subset R(X_\Delta,\G)$:
\begin{gather*}
\mathcal{B}(X_\Delta, \G):=\{bm| m \text{ is an admissible monomial}, b\in \mu_{\Delta_{\supp(f_m)}}\};
\\
\mathcal{AM}(X_\Delta, \G):=\{m| m \text{ is an admissible monomial} \};
\end{gather*}
Let $\Gamma_{\mathcal{AM}}(X_\Delta,\mathcal{G})$ (and $\Gamma_{\mathcal{B}}(X_\Delta,\mathcal{G})$) be the generating function of the set $\mathcal{AM}(X_\Delta, \G)$ (respectively of $\mathcal{B}(X_\Delta, \G)$), namely
\begin{gather*}
\Gamma_{\mathcal{AM}}(X_\Delta,\mathcal{G}):=\sum_{i}|\{\text{admissible monomials in }\mathcal{AM}(X_\Delta,\mathcal{G}) \text{ of degree } i\}|y^i, \\
\Gamma_{\mathcal{B}}(X_\Delta,\mathcal{G}):=\sum_{i}|\{bm\in \mathcal{B}(X_\Delta,\mathcal{G}) \text{ of total degree } i\}|y^i.
\end{gather*}
Let $\G:=\{G_1,\ldots,G_r\}$  and $\succ_\G$  a linear order on $\G$ such that $G_i<_{\LL}G_j$ if $j<i$; in the following we denote $G_r$ simply as $G$. 

\begin{example}
\label{runningadmissible}
    Consider the arrangement and building set in \Cref{running}. The maximal chains in $\Bl_{\G}\LL$ are:
    \begin{align*}
        & \{a\}<(\{a,b\},L_i)<\{a,b,P_i\} \text{ for } i=1,2,3, \\
        & \{b\} < (\{a,b\},L_i)<\{a,b,P_i\} \text{ for } i=1,2,3, \\
        & \{c\} < \{c,P_i\} \text{ for } i=1,2,3.\\
        & \{P_i\} < \{a,P_i\} < \{a,b,P_i\} \text{ for } i=1,2,3,\\
        & \{P_i\} < \{b,P_i\} < \{a,b,P_i\} \text{ for } i=1,2,3.\\
        &\{P_i\} < \{c,P_i\} \text{ for } i=1,2,3.
    \end{align*}
    The only admissible monomials are:
    \begin{align*}
        1,t_{\{c\}}, t_{\{P_i\}},t_{\{P_i\}}^2\text{ for } i=1,2,3,
    \end{align*}
    coming respectively from the chains:
    \begin{align*}
        & \emptyset ,\{c\},\{P_i\}\text{ for } i=1,2,3.
    \end{align*}
    The monomial bases for the associated toric varieties are: 
    \begin{align*}
        & \mu_\Delta\text{: a monomial basis for } H^{\ast}(X_{\Delta};\ZZ), \\
        &\mu_{\Delta_{\{c\}}} = \{1,c_3\}, \\
        &\mu_{\Delta_{\{P_i\}}} = \{1\}.
    \end{align*}
\end{example}

\begin{lemma}\label{lemma:recursion_admissible}
The generating function of $\mathcal{AM}$ satisfies the relation
\[
\Gamma_{\mathcal{AM}}(X_\Delta,\mathcal{G})=\Gamma_{\mathcal{AM}}(X_\Delta,\mathcal{G}')+\frac{y(1-y^{d-1})}{1-y}\Gamma_{\mathcal{AM}}(X_{\Delta_G},\mathcal{G}'')
\]
where $d=\rk(G)$, $\G'=\G \setminus \{G\}$, and $\G''=\G_{G}$.
\end{lemma}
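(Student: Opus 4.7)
The strategy is to partition $\mathcal{AM}(X_\Delta, \G)$ according to whether $G \in \supp(f_m)$, and to identify the two parts with the two summands on the right-hand side of the recursion.

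The first part consists of admissible monomials with $G \notin \supp(f_m)$; this part should correspond bijectively to $\mathcal{AM}(X_\Delta, \G')$, preserving the $y$-degree. The content here is the observation that a pair $(S, X)$ with $S \subseteq \G'$ is $\G$-nested if and only if it is $\G'$-nested. The forward direction is immediate since $\G' \subset \G$. For the converse, failure would require the join of some incomparable subset of $S$ to equal $G$ itself; but $G$ is $\LL$-minimal in $\G$ by the choice of order $\succ_\G$, so each element of that subset would have to lie strictly below $G$, which is impossible for an element of $\G'$. The admissibility constraint is expressed purely in terms of $\LL$-ranks and joins, and so is unchanged.

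For the second part I would apply \Cref{lemma:building_in_contr} to the maximal nested set $(\{G\}, G)$ above $G$: the pair $(\{G\}, G)$ is indeed maximal because no further element of $\G$ can be added (any additional element would have to lie at or below $G$, again by minimality). This yields an isomorphism $\Bl_\G(\LL)_{\geq (\{G\}, G)} \simeq \Bl_{\G''}(\LL_{\geq G})$, identifying a $\G$-nested set $(S,X)$ with $G \in S$ with $(\tilde S, X)$, where $\tilde S = \{(G' \vee G)_{\leq X} \mid G' \in S \setminus \{G\}\}$. The function $\varphi\colon S \to \mathbb{N}_{\geq 1}$ underlying an admissible monomial splits as $(\varphi(G), \varphi|_{S \setminus \{G\}})$; since $(S)_{<G} = \emptyset$ by minimality, we have $M_{S,G} = \hat 0$, so admissibility at $G$ reduces to $1 \leq \varphi(G) \leq d-1$, producing the factor $y + y^2 + \cdots + y^{d-1} = y(1-y^{d-1})/(1-y)$.

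The main obstacle is to verify that, for every $G' \in S \setminus \{G\}$, the admissibility range transports correctly under the bijection, namely
\[\rk_\LL(G') - \rk_\LL(M_{S,G'}) = \rk_{\LL_{\geq G}}(\tilde G') - \rk_{\LL_{\geq G}}(M_{\tilde S, \tilde G'}).\]
I would treat two cases. If $G < G'$, then $\tilde G' = G'$ and the set $\tilde S_{<G'}$ corresponds to $S_{<G'} \setminus \{G\}$; using that $[\hat{0}, G']$ decomposes as a product of $\G$-factor intervals, both $\rk(G')$ and $\rk(M_{S,G'})$ drop by exactly $d = \rk(G)$ on passing to $\LL_{\geq G}$, and equality follows. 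If instead $G$ and $G'$ are incomparable, then $\tilde G' = (G \vee G')_{\leq X}$ and the geometric building set property gives $\rk(\tilde G') = \rk(G') + d$; the same additivity applied to $M_{\tilde S, \tilde G'} = (M_{S,G'} \vee G)_{\leq X}$ yields $\rk(M_{\tilde S, \tilde G'}) = \rk(M_{S,G'}) + d$, and again the equality follows. Finally, the $y$-degree of $m$ is additive as $\sum_{G' \in \G} f_m(G') = \varphi(G) + \bigl(\text{$y$-degree of the contracted admissible monomial}\bigr)$, so summing the two contributions produces exactly the claimed recursion.
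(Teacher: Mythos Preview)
Your proposal is correct and follows essentially the same approach as the paper: partition $\mathcal{AM}(X_\Delta,\G)$ by the value $f_m(G)$, identify the $f_m(G)=0$ part with $\mathcal{AM}(X_\Delta,\G')$ using minimality of $G$, and use \Cref{lemma:building_in_contr} together with the same two-case rank computation (for $G' > G$ versus $G'$ incomparable to $G$) to match the remaining part with $\mathcal{AM}(X_{\Delta_G},\G'')$ times the factor $y+\cdots+y^{d-1}$. The paper spells out the bijection explicitly at the level of monomial chains (its eq.~\eqref{eq:def_phi}) and then separately verifies injectivity and surjectivity, whereas you work directly with the function $f_m$ and the top nested set; these are equivalent packagings of the same argument.
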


\begin{proof}

We divide the set $\mathcal{AM}(X_\Delta,\mathcal{G})$ according to the value of the associated admissible function on $G$:
\[
\mathcal{AM}_i=\{m \in \mathcal{AM}(X_{\Delta},\G)| f_m(G)=i \}
\]
notice that $\mathcal{AM}_i$ is empty for $i\geq d$.
The supports of the elements in $\mathcal{AM}_0$ are nested sets not containing $G$, hence nested sets in $\G'$ and $\mathcal{AM}_0$ is in graded-preserving bijection with $\mathcal{AM}(X_{\Delta},\G')$.
Therefore the contribution of the terms in $\mathcal{AM}_0$ to $\Gamma_{\mathcal{AM}}(X_{\Delta},\G)$ is equal to $\Gamma_{\mathcal{AM}}(X_{\Delta},\G')$.

If $m\in \mathcal{AM}(X_{\Delta},\G)$ and $G\in \supp(f_m)$ then the constraint on $f_m(G)$ is 
\[0<f_m(G)<\rk(G)-\rk(M_{S_k,G})=\rk(G)-\rk(\hat{0})=d\]
the equality $M_{S_k,G}=\hat{0}$ follows from the fact that $G$ is minimal in $\G$ according to the partial order of $\LL$.
The sets $\mathcal{AM}_1,\ldots,\mathcal{AM}_{n-1}$ are in bijection with each other.
More precisely, given $m\in \mathcal{AM}_1$, 
\[
m=t^{e_1}_{a_1}t^{e_2}_{a_2}\ldots t^{e_k}_{a_k}=t_{a_k}\prod_{G_i\in \supp(f_m)}t_{g_i}^{f_m(G_i)-1}\text{ with }a_1<\ldots<a_k \text{ in }\Bl_\G(\LL)
\]
with $g\leq  a_k$;
there exists $\tilde{m}\in \mathcal{AM}_i$, $2\leq i<d$, such that $\tilde{m}=t_g^{i-1}m$.

We want to put the elements in $\mathcal{AM}(X_\Delta,\G'')$ in bijection with the elements of $\mathcal{AM}_1$.
Let $m\in \mathcal{AM}_1$, then
\[
m=t^{e_1}_{a_1}t^{e_2}_{a_2}\ldots t^{e_k}_{a_k}\text{ with }a_1<\ldots<a_k \text{ in }\Bl_{\G}(\LL).
\]
From $f_m(G)=1$ we deduce that $e_k=1$, $g\leq a_k$ and $g\nleq a_{k-1}$ $\Bl_{\G}(\LL)$. We take $b_k=a_k=a_k\vee g$ and $b_i:=(a_i\vee g)_{\leq a_k}$ in $\Bl_{\G}(\LL)$  for $i=1,\ldots, k-1$.
By \Cref{lemma:building_in_contr} we have $\{b_1,\ldots,b_k\}\subset (\Bl_\G(\LL))_{\geq g} \simeq \Bl_{\G''} (\LL_{\geq G})$, we define  
\[
\phi \colon \mathcal{AM}_1\rightarrow \mathcal{AM}(X_{\Delta_G},\G'')
\]
by
\begin{equation}\label{eq:def_phi}
\phi(m)=
\begin{cases}
t_{b_k}^{1}t_{b_{k-1}}^{e_{k-1}}\ldots t_{b_{1}}^{e_{1}} &\text{ if }a_k \not \in a_{k-1}\vee g, \\
t_{b_{k-1}}^{e_{k-1}+1}\ldots t_{b_{1}}^{e_{1}} &\text{ if }a_k \in a_{k-1}\vee g.
\end{cases}
\end{equation}

Next we prove that $\phi$ is well defined i.e.\ $\phi(m)\in \mathcal{AM}(X_{\Delta_G},\G'')$ for every $m\in \mathcal{AM}_1$. Let $S_k=\{G,G_{i_1},\ldots,G_{i_l}\}$, then for every $G_{i_j}$ one of the following holds:
\begin{itemize}
\item $G_{i_j}> G$ in $\LL$ and $G_{i_j}\vee G=G_{i_j}=H_j\in \LL_{\geq G}$. From the definition of nested set, no other $G_{i_{h}}\in S_k$ is such that $G_{i_j}= (G_{i_h}\vee G)_{\leq X_k}$;
\item $G_{i_j}$ and $G$ are incomparable in $\LL$ and $H_{j}=(G\vee G_{i_j})_{\leq X_k}$. No other $G_{i_{h}}$ is such that $H_{j}=(G\vee G_{i_h})_{\leq X_k}$ otherwise $(G\vee G_{i_j})_{\leq X_k}=(G\vee G_{i_h}\vee G_{i_h})_{\leq X_k}$ in contradiction with the fact that $\G$ is building.
\end{itemize}

Assume that $b_k=(T_k,Y_k)$, then there is a $1:1$ correspondence between $T_k$ and $S_k\setminus\{G\}$.

By definition of $\phi$ the equality $f_m(G_{i_j})=f_{\phi(m)}(H_j)$ holds.
In order to see that $\phi(m)\in \mathcal{AM}(X_{\Delta_G},\G'')$ we study the rank of the elements in $\supp(f_{\phi(m)})$.
In the following we denote by $\rk$ the rank in $\LL$ and by ${\rk''}$ the rank in $\LL_{\geq G}$.

\begin{itemize}
\item If $G_{i_j}<G$ and $H_j=G_{i_j}$ we have that $\rk(G_{i_j})=\rk(H_j)$ and $\rk''(H_{j})=\rk(H_j)-d$. Notice that  $G_{i_h}\in S_k $ is such that $G_{i_h}< G_{i_j}$ if and only if $(G_{i_h}\vee G)_{\LL_{X_k}}=H_h< H_{i_j}$. 
Let
\begin{equation}\label{eq:mfG}
M_{S_k,G_{i_j}}=\big(\bigvee \{G_{i_h}\in S_k| G_{i_h}<G_{i_j} \}\big)_{\LL_{X_k}};
\end{equation}
\begin{equation}\label{eq:m'fH}
 M_{\supp(f_{m'}),H_{j}}=\big(\bigvee \{H_{h}\in T_k| H_{h}<H_{j} \}\big)_{\LL_{Y_k}}
\end{equation}
then, because $G< G_{i_j}$, $M_{S_k,G_{i_j}}=  M_{\supp(f_{\phi(m)}),H_{j}}$ and $\rk(M_{\supp(f_{\phi(m)}),H_{j}})=\rk''(M_{\supp(f_{\phi(m)}),H_{j}})+d$.

\item Assume that $G_{i_j}$ and $G$ are incomparable, then $H_{j}=G_{i_{j}}\vee G$ in $\LL_{X_k}$ and, because $G$ and $G_{i_j}$ are the $\G$-factors of $H_{j}$, $\rk(H_{j})=\rk(G)+\rk(G_{i_j})$, $\rk''(H_{i_j})=\rk(G_{i_j})$.
Taking $M_{S_k,G_{i_j}}$ and $M_{\supp(f_{m'}),H_{j}}$ as in eq.\ \eqref{eq:mfG}, \eqref{eq:m'fH} we notice that $G$ and $M_{S_k,G_{i_j}}$ are incomparable and $M_{\supp(f_{m'}),H_{j}}=M_{S_k,G_{i_j}}\vee G$ in $\LL_{\leq X_k}$.  We conclude that $$\rk(M_{\supp(f_{\phi(m)}),H_{j}})=\rk(M_{S_k,G_{i_j}})+\rk(G)$$ and $$\rk''(M_{\supp(f_{\phi(m)}),H_{j}})=\rk(M_{S_k,G_{i_j}}).$$

\end{itemize}

In both situations we obtain
\begin{equation}\label{eq:rank_equality}
f_{\phi(m)}(H_j)=f_{m}(G_{i_j})< \rk(G_{i_j})-\rk{M_{S_k,G_{i_j}}}= 
{\rk''}(H_k)-{\rk''}{M_{\supp(f_{\phi(m)}),H_{j}}}.
\end{equation}

We therefore proved that $\phi \colon \mathcal{AM}_1\rightarrow \mathcal{AM}(X_{\Delta_G},\G'')$ is well defined, furthermore 
 $\phi(m)$ is an admissible monomial whose degree is the degree of $m$ decreased by $1$. At last we show that $\phi$ is a bijection, to this it is crucial to keep in mind that $(\Bl_\G(\LL))_{\geq g} \simeq \Bl_{\G''} (\LL_{\geq G})$ (see \Cref{lemma:building_in_contr}).

We study the injectivity of $\phi$. Assume by contradiction that $\phi(m_1)=\phi(m_2)=m'$ and let $(T,Y)\in (\Bl_\G(\LL))_{\geq g}$ be the support of $f_{m'}$then, because $(\Bl_\G(\LL))_{\geq g} \simeq \Bl_{\G''} (\LL_{\geq G})$, $f_{m_1}$ and $f_{m_2}$ must have the same domain $(S,X)\in \Bl_{\G''} (\LL_{\geq G})$.
We already noticed that there is a bijection between $S\setminus \{G\}$ and $T$, say that this bijection associates $G_{i_j}$ with $T_j$. Based on the definition of $\phi$ we have that $f_{m_1}(G_{i_j})=f_{m'}(T_j)=f_{m_2}(G_{i_j})$ implying that $m_1=m_2$.

To see that $\phi$ is surjective we consider $m'\in \mathcal{AM}(X_{\Delta_{G}},\G'')$ a find $m\in \mathcal{AM}_1$ such that $\phi(m)=m'$. Let $(T,Y)\in (\Bl_\G(\LL))_{\geq g}$ be the support of $m'$, the support of $m$ must be the one nested set $(S,X)\in \Bl_\G{\LL}$ that corresponds to $(T,Y)$ through the isomorphism $(\Bl_\G(\LL))_{\geq g} \simeq \Bl_{\G''} (\LL_{\geq G})$. From the already stated bijection between $T$ and $S\setminus \{G\}$ we obtain the values for $f_m$.
The resulting monomial $m$ is admissible, this can be proved with arguments similar to the ones in the proof of \cref{eq:rank_equality}.

It follows that $\phi$ is a grade-shifting bijection between $\mathcal{AM}_1$ and $\mathcal{AM}(X_{\Delta_G},\G'')$ and in particular the degree is shifted by $-1$. We deduce that the contribution of the sets $\mathcal{AM}_1,\ldots,\mathcal{AM}_{d-1}$ is
\[
\frac{y(1-y^{d-1})}{1-y}\Gamma_{\mathcal{AM}}(X_{\Delta_G},\mathcal{G}'')
\]
and this concludes the proof.
\end{proof}

\begin{lemma}\label{lemma:recursion_gamma}
The generating function of $\mathcal{B}(X_\Delta, \G)$satisfies the relation
\[
\Gamma_{\mathcal{B}}(X_\Delta,\mathcal{G})=\Gamma_{\mathcal{B}}(X_\Delta,\mathcal{G}')+\frac{y(1-y^{d-1})}{1-y}\Gamma_{\mathcal{B}}(X_{\Delta_G},\mathcal{G}'')
\]
where $d=\rk(G)$, $\G'=\G \setminus \{G_r\}$, and $\G''=\G_{G_r}$.
\end{lemma}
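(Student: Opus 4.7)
The plan is to adapt the proof of \Cref{lemma:recursion_admissible} by partitioning $\mathcal{B}(X_\Delta,\G)$ in the same way and checking that the toric factor $b \in \mu_{\Delta_{\supp(f_m)}}$ behaves well under the bijection $\phi$ constructed there.

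First, I would split $\mathcal{B}(X_\Delta,\G)$ according to whether $G \in \supp(f_m)$. The elements $bm$ with $G \notin \supp(f_m)$ are in immediate degree-preserving bijection with $\mathcal{B}(X_\Delta,\G')$, because $m$ is admissible also with respect to $\G'$ and the fan $\Delta_{\supp(f_m)}$ (hence the escalier $\mu_{\Delta_{\supp(f_m)}}$) depends only on the nested support, independently of which enclosing building set one uses. This yields the first summand $\Gamma_{\mathcal{B}}(X_\Delta,\G')$.

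For the remaining part, where $f_m(G)=i$ for some $1\leq i \leq d-1$, I would mirror the argument of \Cref{lemma:recursion_admissible} in two steps. First, since $\supp(f_m)$ is invariant under multiplication by $t_g^{i-1}$, the set of admissible toric factors $b \in \mu_{\Delta_{\supp(f_m)}}$ is the same across the level, so each set $\mathcal{AM}_i$ contributes the same generating function for $b$ as $\mathcal{AM}_1$, just shifted by $y^{i-1}$. Second, I would use $\phi \colon \mathcal{AM}_1 \to \mathcal{AM}(X_{\Delta_G},\G'')$ to transport the level $\mathcal{B}_1 = \{bm \mid m \in \mathcal{AM}_1\}$ to $\mathcal{B}(X_{\Delta_G},\G'')$, shifting degree by one.

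The only nontrivial point, which I expect to be the main obstacle, is the identification of toric factors under $\phi$, namely $\mu_{\Delta_{\supp(f_m)}} = \mu_{(\Delta_G)_{\supp(f_{\phi(m)})}}$. For this it is enough to establish the fan identity $\Delta_{\supp(f_m)} = (\Delta_G)_{\supp(f_{\phi(m)})}$. The argument should go as follows: if $a_k=(S_k,X_k)$ is the maximum of $\supp(m)$, then $G \leq X_k$ because $G \in S_k$, so $\Ann(\Gamma_{X_k}) \subseteq \Ann(\Gamma_G)$ and intersecting $\Delta \cap \Ann(\Gamma_{X_k})$ further with $\Ann(\Gamma_G)$ is redundant; combined with the fact that the isomorphism $(\Bl_\G\LL)_{\geq g} \simeq \Bl_{\G''}(\LL_{\geq G})$ of \Cref{lemma:building_in_contr} preserves the value of $\pi$, this yields the claimed equality of fans. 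Once this is granted, summing the contributions $y^i \cdot \Gamma_{\mathcal{B}}(X_{\Delta_G},\G'')$ over $i = 1, \dots, d-1$ produces the geometric series $\frac{y(1-y^{d-1})}{1-y}\Gamma_{\mathcal{B}}(X_{\Delta_G},\G'')$, completing the recursion.
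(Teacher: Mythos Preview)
Your proposal is correct and follows essentially the same route as the paper: both partition $\mathcal{B}(X_\Delta,\G)$ by the value $f_m(G)$, reuse the bijection $\phi$ from \Cref{lemma:recursion_admissible}, and reduce the compatibility of the toric factor to the equality of fans $\Delta\cap\Ann(\Gamma_{X_k})=(\Delta\cap\Ann(\Gamma_G))\cap\Ann(\Gamma_{X_k})$. The paper phrases this last step as $\Gamma_{\pi''(\supp(f_{\phi(m)}))}=\Gamma_{\pi(\supp(f_m))}$, which is equivalent to your annihilator argument together with the observation (implicit in the proof of \Cref{lemma:building_in_contr}) that the isomorphism $(\Bl_\G\LL)_{\geq g}\simeq\Bl_{\G''}(\LL_{\geq G})$ preserves the underlying layer.
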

\begin{proof}

The proof is similar to the one of \Cref{lemma:recursion_admissible}. We split $\mathcal{B}(X_{\Delta},G)$ in the subsets
$$
\mathcal{B}_i:=\{bm\in \mathcal{B}(X_{\Delta},\G) | f_m(G)=i\}.
$$
We notice that
\begin{itemize}
\item $\mathcal{B}_0$ is in bijection with $\mathcal{B}(X_{\Delta},\G)$;
\item $\mathcal{B}_1,\ldots,\mathcal{B}_{d-1}$. Similarly to what was done for \Cref{lemma:recursion_admissible} we can define a bijection from $\mathcal{B}_1$ to $\mathcal{B}_i$ that associates to
\[
bm=bt_{a_k}\prod_{G_i\in \supp(f_m)}t_{g_i}^{f_m(G_i)-1} 
\text{  with } a_k\in \Bl_{\G}(\LL),
 b\in \mu_{\Ann(\Gamma_{\pi(a_k)})}
\]
the monomial 
\[
bm'=bt_g^{i-1}\in \mathcal{AM}_i.
\]
\end{itemize}

We focus on proving a bijection between $\mathcal{B}_1$ and $\mathcal{B}(X_{\Delta_G},\mathcal{G}'')$.
It is enough to recall that $\Gamma_{\pi(a_k)}$ 
for $a_k=(S,X)\in \Bl_{\G}(\LL)$ is the intersection of the following split direct summands of $X^*(T)$: $	\{\Gamma(G_i)\}_{G_i\in \max S}$.
With this in mind we see that if $\phi(m)=m'$ (see eq.\ \eqref{eq:def_phi} for the definition of $\phi$), then
\[
\Gamma_{\pi''(\supp(f_{m'}))}=\Gamma_{\pi(\supp(f_m))}
\]
where $\pi'' \colon \Bl_{\G''}(\LL'')\rightarrow \LL''$. More in detail we see that if $\supp(f_m)=\{G,G_{i_1},\ldots,G_{i_{\ell}}\}$ and $\supp(f_{m'})=\{H_1,\ldots,H_{\ell}\}$, then $\Gamma(H_i)=\Gamma(G)\cap \Gamma(G_i)$.
It follows that $\mu(\Gamma_{\supp(f_m)}) = \mu(\Gamma_{\supp(f_{m'})})$ from this and the grade shifting bijection $\phi$ from $\mathcal{AM}_1$ and $\mathcal{AM}(X_{\Delta_G},\G'')$ one concludes that
\[
\Gamma_{\mathcal{B}}(X_{\Delta},\mathcal{G})=\Gamma_{\mathcal{B}}(X_{\Delta},\mathcal{G}')+\frac{y(1-y^{d-1})}{1-y}\Gamma_{\mathcal{B}}(X_{\Delta_G},\mathcal{G}''). \qedhere
\]
\end{proof}

\subsection{Additive basis via Gr\"obner basis}
\label{subsec:Grobner_basis}

In what follows we present the proof of \Cref{thm:Grobner_Basis} that describes a Gr\"obner basis for the ideal $I_{\LL}+L_{\Delta}$.
A a corollary of this is the fact that the set $\mathcal{B}(X_{\Delta},\G)$ is an additive basis for $R(X_{\Delta},\G)$.

In order to work with Gr\"obner basis in $\mathbb{Z} [c_r, t_a]$ we fix a monomial order.
We already defined a total order $\succ_\G$ on the building set $\G$ which refines the partial order on $\G \subset \LL$, consider also $\succ_{\mathcal{R}}$ as a fixed total order of $\mathcal{R}$. 
Given two nested sets $a=((S_1 \succ_\G S_2 \succ_\G \dots \succ_\G S_r), x)$ and $b=((T_1 \succ_{\G} T_2 \succ_\G \dots \succ_\G T_k), y) \in \Bl_\G \LL$,
we introduce a partial order on $\Bl_\G \LL$ given by
\begin{align*}
    a \succ b & \text{ if } S_i \succ_\G T_i  \text{ and } S_j = T_j \text{ for all } j<i, \\
    & \text{ or if } r>k \text{ and } S_j=T_j \text{ for all } j\leq k.
\end{align*}
We extend arbitrarily this partial order to a total order $\succ_{\Bl_\G \LL}$ on $\Bl_\G \LL$.
Notice that the total order $\succ_{\Bl_\G \LL}$ refines the partial order $>_{\Bl_\G \LL}$ on the nested sets.
Now we order the variables $t_a$ using the reverse order on the nested sets, i.e.\ $t_a > t_b$ if and only if $a \prec_{\Bl_\G \LL} b$. 
Moreover, we set $t_a > c_r$ and $c_r > c_s$ if $r \succ_{\mathcal{R}} s$.
We introduce the monomial order DegRevLex on $\mathbb{Z} [c_r, t_a]$ using the above order on variables.

In particular the initial ideal of $I_\G+L_{\Delta}$ (see Definition \ref{def:cohomologyring}) contains the monomials:
\begin{enumerate}
%    \item $\prod_{i} c_{r_i}$,
%    \item $t_ac_r$ for all $a \in \Bl_\G(\LL)$ and all $r \not \in \Ann \Gamma_{\pi(a)}$,
    \item $t_bt_G^{s-1}$ for all $a \lessdot b \in \Bl_\G \LL$ such that $\pi(a) < \pi (b) $ and $G\in \G$ is the unique element so that $b \in \{ G \} \vee a$,
    \item $t_at_b$ for all $a,b$ incomparable elements in $(\Bl_\G \LL) \setminus \{\hat{0}\}$.
\end{enumerate}

\begin{remark}
    Recall that there is a theory of Gr\"obner bases for polynomial rings with coefficient in a Noetherian, commutative ring $R$. As we just said, in our case we will be considering $R=\ZZ$. The definition of Gr\"obner basis is analogous to the case of coefficients being in a field, although the theory differs necessarily in some aspects. For details, see \cite[Chapter 4]{AdamsGroebnerBases}.
\end{remark}

\begin{theorem} \label{thm:Grobner_Basis}
Given a toric arrangement $\mathcal{A}$ and a building set $\G\subset \LL(\mathcal{A})$, let $\Delta\subset V$ be a fan and $U$ the set of polynomials described in \Cref{def:cohomologyring}. Then the following set:
$$
\alpha_{\Delta,\G}:=\{t_ab| a\in \Bl_{\G}(\LL) \setminus \{ \hat{0}\}, b\in \beta_{\Delta_a}\}\cup \beta_{\Delta} \cup U
$$
is a Gr\"obner basis for $I_{\G}+L_{\Delta}$ respect to the monomial order DegRevLex.
\end{theorem}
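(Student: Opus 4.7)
The plan is to establish the Gröbner basis property by a counting argument, rather than via direct Buchberger S-polynomial computations. First, I would read off the leading monomials of every element of $\alpha_{\Delta, \G}$ under the DegRevLex order described before the theorem. From $U$ one obtains $c_r t_a$ (for $r \notin \Ann \Gamma_{\pi(a)}$), $t_a t_b$ for incomparable $a, b$, and $t_b t_{\{G\}}^{s-1}$ from the Chern-polynomial relation $Q^b_a(\G)$; from $\beta_\Delta$ and $t_a\beta_{\Delta_a}$ one obtains monomials in the $c_r$ variables (possibly multiplied by a single $t_a$). A routine combinatorial check then shows that the monomials not divisible by any of these leading monomials are exactly the elements of $\mathcal{B}(X_\Delta, \G)$: the flag support on the $t$-variables comes from avoiding the incomparable-pair relation (as in \Cref{LCflags}), the exponent inequality encoding the Chern-polynomial relation is exactly the admissibility condition of \Cref{def:admissible}, and the $c_r$-part lying in $\mu_{\Delta_{a_k}}$ comes from the $c_r t_a$ relations combined with $\beta_\Delta$.

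Next, I would use \Cref{lemma:recursion_gamma} to identify $\Gamma_{\mathcal{B}}$ with the Poincaré polynomial of $H^*(Y(X_\Delta,\G);\ZZ)$. The geometric side of the comparison uses \Cref{thm:blowup} and the lifted Chern polynomial of \eqref{eq:Chern_poly_Y}: since $Y(X_\Delta, \G)$ is the blowup of $Y(X_\Delta, \G')$ along the dominant transform of $\overline{\mathcal{K}}_G$ (itself isomorphic to $Y(X_{\Delta_G}, \G'')$), its cohomology decomposes as $H^*(Y(X_\Delta,\G')) \oplus \bigoplus_{i=1}^{d-1} t^{i}\cdot H^*(\overline{\mathcal{K}}_G')$ as graded $\ZZ$-modules. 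After the appropriate grade shift this is exactly the recursion satisfied by $\Gamma_{\mathcal{B}}$. Induction on $|\G|$, with base case $\G = \emptyset$ where both sides reduce to the Poincaré polynomial of $X_\Delta$ and $R(X_\Delta,\emptyset)=H^*(X_\Delta;\ZZ)$, closes the numerical identity.

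Combined with the surjection $R(X_\Delta, \G) \twoheadrightarrow H^*(Y(X_\Delta, \G);\ZZ)$ built in \Cref{sec:main_proof}, sending $t_a \mapsto [D_a]$ and $c_r\mapsto c_r$, the rank match forces the cosets of $\mathcal{B}(X_\Delta, \G)$ to form a $\ZZ$-basis of $R(X_\Delta, \G)$: they span because reduction against $\alpha_{\Delta,\G}$ terminates in $\ZZ$-linear combinations of elements of $\mathcal{B}$, and their cardinality matches the rank of the target. Consequently the leading monomials of $\alpha_{\Delta, \G}$ generate the initial ideal of $I_\G + L_\Delta$, and hence $\alpha_{\Delta, \G}$ is a Gröbner basis.

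The main obstacle is the interplay between the combinatorial leading-monomial analysis and the geometric rank computation: one must verify that the combinatorial recursion of \Cref{lemma:recursion_gamma} agrees term by term with the cohomological recursion coming from \Cref{thm:blowup} applied to the last blowup in the wonderful-model construction, paying careful attention to the identification of $\G''$-nested sets on $\LL_{\geq G}$ provided by \Cref{lemma:building_in_contr}. A secondary subtlety is the $\ZZ$-coefficient setting: each generator in $\alpha_{\Delta,\G}$ must be monic (up to sign) in its leading monomial for the escalier argument to go through; this is clear by inspection of the relations in $U$, and for $\beta_\Delta$ and $\beta_{\Delta_a}$ it is inherited from the standard Gröbner bases of the cohomology rings of smooth toric varieties.
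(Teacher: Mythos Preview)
Your proposal is correct and follows essentially the same route as the paper: identify the escalier of $\alpha_{\Delta,\G}$ with $\mathcal{B}(X_\Delta,\G)$ (this is part (a) of \Cref{lemma:H>R}), match $\Gamma_{\mathcal{B}}$ with the Poincar\'e polynomial of $H^*(Y(X_\Delta,\G);\ZZ)$ via the blowup recursion and \Cref{lemma:recursion_gamma} (part (b) of \Cref{lemma:H>R}), and then close the chain of inequalities using the surjection $R\twoheadrightarrow H^*$ from \Cref{prop:well_surj} together with the torsion-freeness observation of \Cref{remark:torsion_free}. The only cosmetic difference is that the paper invokes \Cref{thm:cohomology:blowup} rather than \Cref{thm:blowup} for the additive splitting, but both yield the same graded-rank recursion.
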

The proof of the previous theorem will be postposed to the end of \Cref{sec:main_proof}.

Let $\Hilb(A)$ be the Hilbert series in the variable $y$ of a graded $\mathbb{Z}$-algebra $A$, where the ``dimension'' of each graded piece is the rank as $\mathbb{Z}$-module.
We assume that the rank of every graded piece of $A$ is finite.
Throughout $J=I_{\G}+L_{\Delta}$ and $\alpha=\alpha_{\Delta,\G}$, the initial ideal $\In(\alpha)=(\operatorname{lt}(p) \mid p \in \alpha)$ of the set $\alpha$ is contained in $\In(J)$, and 
\[\Hilb(\ZZ[c_r,t_a]/\In(\alpha)) \geq \Hilb(\ZZ[c_r,t_a]/\In(J)) = \Hilb (\ZZ[c_r,t_a]/J). \]
The inequality between two series is intended coefficientwise.

\begin{remark} \label{remark:torsion_free}
    Let us assume that $\beta_\Delta$ and $\beta_{\Delta_a}$ for any $a \in \Bl_\G \LL$ are minimal Gr\"obner basis. Then, each polynomial $f \in \alpha_{\Delta,\G}$ has leading coefficient one $\operatorname{lc}(f)=1$. Therefore, the $\ZZ$-module $\ZZ[c_r,t_a]/\In(\alpha_{\Delta,\G})$ is torsion free.

    The same holds for arbitrary Gr\"obner basis: indeed the ideal $\In(\alpha_{\Delta,\G})$ does not depend on the choice of $\beta_\Delta$ and $\beta_{\Delta_a}$'s.
\end{remark}

\begin{lemma} \label{lemma:H>R}
    We have $\Hilb(\ZZ[c_r,t_a]/\In(\alpha_{\Delta,\G}))=\Hilb(H^*(Y(X_\Delta,\G);\ZZ))$.
    In particular,
    \[ \Hilb(H^*(Y(X_\Delta,\G);\ZZ)) \geq \Hilb(R(X_\Delta,\G)). \]
\end{lemma}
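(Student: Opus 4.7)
The plan is to prove that both sides equal the generating function $\Gamma_{\mathcal{B}}(X_\Delta,\G)$, from which the ``in particular'' clause will follow.

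For the left-hand side, I will show that the Gr\"obner escalier of $\In(\alpha_{\Delta,\G})$ --- the monomials of $\ZZ[c_r,t_a]$ divisible by no leading term of any generator in $\alpha_{\Delta,\G}$ --- is exactly $\mathcal{B}(X_\Delta,\G)$. A monomial $b\cdot t_{a_1}^{e_1}\cdots t_{a_k}^{e_k}$ lies in the escalier precisely when (i) the $a_i$ are totally ordered in $\Bl_\G \LL$ (using the leading terms $t_a t_b$ from relations (iii) of Definition \ref{def:cohomologyring}); (ii) the chain monomial $m=t_{a_1}^{e_1}\cdots t_{a_k}^{e_k}$ is admissible, i.e.\ $f_m(G) < \rk(G) - \rk(M_{S_k,G})$ for all $G\in\supp(f_m)$ (using the leading terms $t_b t_{\{G\}}^{s-1}$ from relations (ii)); and (iii) $b \in \mu_{\Delta_{\supp(f_m)}}$ (using relations (i) together with the $t_a \beta_{\Delta_a}$-family). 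By Remark \ref{remark:torsion_free}, the quotient $\ZZ[c_r,t_a]/\In(\alpha_{\Delta,\G})$ is $\ZZ$-free, so its Hilbert series counts this escalier and therefore equals $\Gamma_\mathcal{B}(X_\Delta,\G)$.

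For the right-hand side, I induct on $|\G|$, taking the base case to be $\G=\emptyset$, for which $Y(X_\Delta,\emptyset)=X_\Delta$ and Theorem \ref{thm:cohomology_toric} gives $\Hilb(H^*(X_\Delta))=|\mu_\Delta|_y=\Gamma_\mathcal{B}(X_\Delta,\emptyset)$. For the inductive step let $G$ be the $\succ_\G$-last (equivalently $\LL$-minimal) element of $\G$, so that $Y(X_\Delta,\G)$ is the blowup of $Y(X_\Delta,\G')$ along $\overline{\mathcal{K}}_G'\simeq Y(X_{\Delta_G},\G'')$ of codimension $d=\rk(G)$. The additive blowup decomposition in equation \eqref{eq:additive_cohomology} --- which, crucially, does not require the surjectivity of $\iota^*$ --- yields
$$\Hilb(H^*(Y(X_\Delta,\G)))=\Hilb(H^*(Y(X_\Delta,\G')))+(y+y^2+\cdots+y^{d-1})\,\Hilb(H^*(Y(X_{\Delta_G},\G''))),$$
which, together with the inductive hypothesis applied to $\G'$ and to $\G''$, matches the recursion for $\Gamma_\mathcal{B}$ proved in Lemma \ref{lemma:recursion_gamma}. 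The induction then closes. The ``in particular'' clause is immediate from the inequality recorded just before the lemma: $\In(\alpha_{\Delta,\G})\subseteq\In(J)$ yields $\Hilb(\ZZ[c_r,t_a]/\In(\alpha_{\Delta,\G}))\geq\Hilb(R(X_\Delta,\G))$, and the equality just proved transports this to the desired inequality.

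The delicate point that I expect to require the most care is the identification of the Gr\"obner escalier with $\mathcal{B}(X_\Delta,\G)$ in the first paragraph. One must verify that, with the DegRevLex order of Section \ref{subsec:Grobner_basis}, the leading term of each Chern-polynomial relation (ii) is indeed $t_b t_{\{G\}}^{s-1}$: this relies on $\succ_\G$ being a refinement of the $\LL$-order, which forces $t_{\{G\}}$ to dominate the other atoms $t_{\{H\}}$ with $H>_\LL G$ appearing in the expansion of $\tau_G^{s-1}$. Once these leading terms are in place, translating the non-divisibility condition into the rank inequality $f_m(G)<\rk(G)-\rk(M_{S_k,G})$ of Definition \ref{def:admissible} is essentially the same rank-bookkeeping that underlies the bijection $\phi$ used in the proof of Lemma \ref{lemma:recursion_admissible}, and can be imported from there.
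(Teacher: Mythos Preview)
The proposal is correct and follows essentially the paper's two-step approach: identify the Gr\"obner escalier of $\In(\alpha_{\Delta,\G})$ with $\mathcal{B}(X_\Delta,\G)$, then match $\Gamma_{\mathcal{B}}$ with $\Hilb(H^*(Y))$ by induction via the blowup recursion of Lemma~\ref{lemma:recursion_gamma}. Your citation of equation~\eqref{eq:additive_cohomology} for the additive splitting is in fact more precise than the paper's own reference to Theorem~\ref{thm:cohomology:blowup}, since surjectivity of $\iota^*$ need not hold here.
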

\begin{proof}
%Throughout $J=I_{\G}+L_{\Delta}$ and $\alpha=\alpha_{\Delta,\G}$, furthermore we 
Let $M$ be the family of monomials with coefficient 1 in $\ZZ[c_{r}, t_a,r\in\mathcal{R},a\in \Bl_\G(\LL) \setminus \{ \hat{0}\}]$.
%The set $\alpha$ generates the ideal $J$, in order to prove that it is a Gr\"obner basis we study the initial ideal $\In(\alpha)=(\operatorname{lt}(p) \mid p \in \alpha)$ and compare it with $\In(J)$.
%Of course $\In(\alpha)\subseteq \In(J)$ and we also know that
%\[
%\dim (\ZZ[c_r,t_a]/\In(J))=\dim (\ZZ[c_r,t_a]/J). 
%\]
%\todo{in che senso dimensione?}

%In order to prove that $\alpha$ is a Gr\"obner basis 
We show the following two facts: 
\begin{itemize}
\item[a)] $\mathcal{B}(X_{\Delta},\G)=M\setminus (\In(\alpha)\cap M)$;
\item[b)] the dimension of $\Gamma_{\mathcal{B}}(X_{\Delta},\mathcal{G})=\Hilb(H(Y(X_\Delta,\G));\ZZ)$ .
\end{itemize}
These two equalities prove the first statement, then the second follows easily.
%Then we conclude that $$\dim\ZZ[c_{r}, t_a]/\In(\alpha)=\dim\ZZ[c_{r}, t_a]/\In(J)$$ and so $\In(\alpha)=\In(J)$.

\begin{itemize}
\item[Proof of a)]
Let $m=\prod_{a\in A}t^{e(a)}_a\prod_{r\in R}c^{e(r)}_r$ be a monomial in $M\setminus \In(\alpha)\cap M$, 
where $A\subset \Bl_\G(\LL)$ and $R\subset \mathcal{R}$. By \Cref{def:cohomologyring},(3) we have that the elements in $A$ must be a chain in $\Bl_\G(\LL)$, we fix the following notation $A:={a_1<a_2\ldots<a_k}$ according to the order of $\Bl_\G(\LL)$.

We focus on the part in term of the variables $t_a$, let $a_i=(S_i,x_i)$:
as already noticed in \Cref{rem:monomial}
\[
m=t^{e_1}_{a_1}t^{e_2}_{a_2}\ldots t^{e_k}_{a_k}=t_{a_k}\prod_{G_i\in \supp(f_m)}t_{g_i}^{f_m(G_i)-1}\text{ with }a_1<\ldots<a_k \text{ in }\Bl(\LL)
\]
 We want to find inequalities for the exponents $f_m(G_i)$. Fix $G$ such that $f_m(G)>0$. Notice that $G\in S_k$ and if there a certain $H\in S_k$ such that $H>_{\LL}G$, then
\[
 t_{a_k}=t_{a'_k}t_{h}
 \] 
 where $a'_k:=(S_k\setminus{H},x_k)$. With this in mind one can always assume that $G$ is a maximal element in $S_k$. Let $a''_k:=(S''_k,x''_k)$ where $S''_k=S_k\setminus {G}$ and $x''_k=(\vee S''_k)_{\leq x_k}$; assuming that $G$ is maximal in $S_k$ we see that $a_k,a''_k$ and $G$ satisfy the requirements 
 %of $b,a$ and $G$ as in 
 of \Cref{def:cohomologyring} item ii) therefore in order to have $m\in M\setminus I(Lt(\beta_{X}\cap \alpha))$ we must have $f_m(G)<|\Gamma(a_k)/\Gamma(a''_k)|= \rk(G)-\rk(M_{\supp(f_m),G}$
 %\todo{Viola, c'è un typo ma non riesco a capire quale sia. Riesci a sistemare?}
 . We conclude that $\prod_{a\in A}t^{e(a)}_a\in\mathcal{AM}(X,\G)$. 
We notice that by \Cref{def:cohomologyring},(1) $\prod_{r\in R}c^{e(r)}_r\in \mathbb{Z}[c_r|r\in \Ann_{(a_k)}]$, and by the definition of $\alpha$ $\prod_{r\in R}c^{e(r)}_r\in \mu_{\Delta_{a_k}}$.

\item[Proof of b)]
Notice that $Y(X_\Delta,\G)$ is the blowup of $Y(X_\Delta,\G')$ along the subvariety $\overline{\mathcal{K}}'_G \simeq Y(X_{\Delta_G},\G'')$.
From Theorem \ref{thm:cohomology:blowup} we have that
\[
\Hilb(H^*(Y(X_{\Delta},\mathcal{G}))) = \Hilb(H^*(Y(X_{\Delta},\mathcal{G'}))) + y \frac{1-y^{d-1}}{1-y} \Hilb(H^*(Y(X_{\Delta_G},\mathcal{G}'')))
\]
In Lemma \ref{lemma:recursion_gamma} we proved that the same recursion holds for the generating function of $\mathcal{B}(X_{\Delta},\G)$. 
%To prove the thesis it is enough to show that 
The thesis follows by induction with base step
$$
\Gamma_\mathcal{B}(X_{\Delta},\emptyset)=\Hilb(H^*(X_{\Delta})),
$$
%in the particular case of $\G=\emptyset$, 
this follows easily by noticing that 
$\mathcal{B}(X_{\Delta},\emptyset)=\beta_{\Delta}$. \qedhere
%and $R(X_{\Delta},\emptyset)=B=H^*(X_{\Delta},\ZZ)$.
\end{itemize}
\end{proof}

An immediate consequence of \Cref{thm:Grobner_Basis}, \Cref{remark:torsion_free} , and the proof of \Cref{lemma:H>R} is the following:
\begin{corollary}\label{cor:additive_basis}
    The set $\mathcal{B}(X_{\Delta},\G)$ is a $\mathbb{Z}$-basis of the torsion free module $R(X_{\Delta},\mathcal{G})$.
\end{corollary}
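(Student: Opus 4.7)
The plan is to assemble three already-established ingredients: \Cref{thm:Grobner_Basis}, \Cref{remark:torsion_free}, and part (a) of the proof of \Cref{lemma:H>R}. Since all the technical work has been done there, the role of this proof is mainly bookkeeping.

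First I would invoke \Cref{thm:Grobner_Basis} to know that the set $\alpha_{\Delta,\G}$ is a Gr\"obner basis of the ideal $I_\G+L_\Delta\subset\ZZ[c_r,t_a]$ with respect to the DegRevLex order fixed in \Cref{subsec:Grobner_basis}. Next I would choose the inner Gr\"obner bases $\beta_\Delta$ and $\beta_{\Delta_a}$ to be minimal; then, as observed in \Cref{remark:torsion_free}, every element of $\alpha_{\Delta,\G}$ has leading coefficient $1$.

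Next I would invoke the standard theory of Gr\"obner bases over Noetherian coefficient rings, as in \cite[Chapter 4]{AdamsGroebnerBases}: when the leading coefficients are all units (here, all equal to $1$), the Gr\"obner escalier, that is, the set $M\setminus(\In(\alpha_{\Delta,\G})\cap M)$ of monomials in $\ZZ[c_r,t_a]$ not divisible by any leading monomial of $\alpha_{\Delta,\G}$, projects to a $\ZZ$-linear basis of the quotient $\ZZ[c_r,t_a]/(I_\G+L_\Delta)=R(X_\Delta,\G)$. Because a $\ZZ$-module with a distinguished monomial basis is free, this also immediately gives the torsion-freeness part of the statement.

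Finally I would identify the Gr\"obner escalier with $\mathcal{B}(X_\Delta,\G)$; this identification is precisely what part (a) in the proof of \Cref{lemma:H>R} establishes. Admissibility of the $t_a$-part of a monomial encodes avoidance of the leading monomials coming from relations (ii) and (iii) of \Cref{def:cohomologyring}, while the condition $b\in \mu_{\Delta_{\supp(f_m)}}$ encodes avoidance of the leading monomials of $\beta_\Delta$ together with those of the products $t_a\cdot\beta_{\Delta_a}$. The main obstacle, which is not really an obstacle since it is already isolated in \Cref{remark:torsion_free}, is ensuring that one can work with unit leading coefficients so that the complement-of-initial-ideal argument is valid over $\ZZ$; once this is in hand, assembling the three ingredients yields the corollary at once.
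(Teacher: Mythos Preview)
Your proposal is correct and follows exactly the same approach as the paper, which states the corollary as an immediate consequence of \Cref{thm:Grobner_Basis}, \Cref{remark:torsion_free}, and the proof of \Cref{lemma:H>R}. You have simply spelled out in more detail how these three ingredients combine, including the standard Gr\"obner-basis fact over $\ZZ$ that with unit leading coefficients the escalier gives a $\ZZ$-basis of the quotient.
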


\begin{example}
    The Betti numbers of the good toric variety in \Cref{running_fan} are 
    \begin{align*}
        \rk H^0(X_\Delta;\ZZ) &= 1 \\
        \rk H^1(X_\Delta;\ZZ) &= 11 \\
        \rk H^2(X_\Delta;\ZZ) &= 11 \\
        \rk H^3(X_\Delta;\ZZ) &= 1. \\
    \end{align*}
    From the admissible funcions for the running example in \Cref{runningadmissible} and the main \Cref{thm:main}, we get that a monomial basis for the cohomology ring $H^*(Y(X_\Delta,\G);\ZZ)$ is given by the monomial basis for $X_\Delta$, that is $\mu_{\Delta}$, plus the following elements in degree $1$ and $2$ respectively:
    \begin{align*}
        &t_{\{c\}}, t_{\{P_i\}}\text{ for } i=1,2,3 \\
        & c_3t_{\{c\}}, t_{\{P_{i}\}}^{2}\text{ for } i=1,2,3
    \end{align*}
    Thus, the Betti numbers of $Y(X_\Delta,\G)$ are: 
   \begin{align*}
        \rk H^0(Y(X_\Delta,\G);\ZZ) &= 1 \\
        \rk H^1(Y(X_\Delta,\G);\ZZ) &= 15 \\
        \rk H^2(Y(X_\Delta,\G);\ZZ) &= 15 \\
        \rk H^3(Y(X_\Delta,\G);\ZZ) &= 1. \\
    \end{align*}
\end{example}

\section{Proof of \texorpdfstring{\Cref{thm:main}}{Theorem 4.7}}
\label{sec:main_proof}

Now, we use the results of the previous \Cref{sec:additive_basis} to prove our main result.

\begin{proposition}
\label{prop:well_surj}
    Let $\G$ be a building set of cardinality $m$ on a toric arrangement in a $l$-dimensional torus.
    Assume that the map $R(X_\Delta, \tilde{\G}) \to H^*(Y(X_\Delta, \tilde{\G});\ZZ) $ of \Cref{thm:main} is a well defined isomorphism for all $\tilde{\G}$ and all tori $T$ such that $\lvert \tilde{\G} \rvert < m$ or $\dim T < l$.
    Then the map $R(X_\Delta, \G) \to H^*(Y(X_\Delta,\G);\ZZ) $ of \Cref{thm:main} is well defined and surjective.
\end{proposition}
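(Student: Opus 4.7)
The plan is to proceed by double induction on $|\G|$ and $\dim T$, with base case $\G = \emptyset$: here $Y(X_\Delta, \emptyset) = X_\Delta$ and $R(X_\Delta, \emptyset) = H^*(X_\Delta; \ZZ)$ by \Cref{thm:cohomology_toric}, so the identity map is the required isomorphism. For the inductive step, I would pick $G \in \G$ to be the last element with respect to $\succ_\G$ (equivalently, $\LL$-minimal in $\G$), set $\G' = \G \setminus \{G\}$, and consider the embedding $\iota \colon \overline{\mathcal{K}}_G' \hookrightarrow Y(X_\Delta, \G')$ of the dominant transform, so that $Y(X_\Delta, \G) = \Bl_{\overline{\mathcal{K}}_G'} Y(X_\Delta, \G')$. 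Since $\overline{\mathcal{K}}_G' \simeq Y(X_{\Delta_G}, \G'')$ with $\G'' = \G_G$ by \cite[Theorem 3.1]{DeConciniGaiffi2018} and \Cref{lemma:building_in_contr}, and since $\dim X_{\Delta_G} < \dim X_\Delta$, the inductive hypothesis supplies isomorphisms $R(X_\Delta, \G') \simeq H^*(Y(X_\Delta, \G'); \ZZ)$ and $R(X_{\Delta_G}, \G'') \simeq H^*(Y(X_{\Delta_G}, \G'');\ZZ)$.

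Next, I would define $\varphi \colon R(X_\Delta, \G) \to H^*(Y(X_\Delta, \G);\ZZ)$ on generators by sending $c_r$ to its pullback from $X_\Delta$ and $t_a$ to the corresponding boundary class, matching the sign convention of \Cref{lemma:pullback} and \Cref{lemma:pushforward}. To show that $\varphi$ is well-defined, I must verify the three families of relations of \Cref{def:cohomologyring}: relation (i) holds because $D_a$ maps into $\overline{\mathcal{K}}_{\pi(a)}$, which is disjoint from the toric divisor $D_r$ in $X_\Delta$ whenever $r \notin \Ann \Gamma_{\pi(a)}$; relation (iii) encodes cleanness of intersections of strata, with $D_a \cap D_b$ empty when $a \vee b = \emptyset$ and otherwise decomposing inside $D_{a \wedge b}$ as the transverse union of the $D_c$ for $c \in a \vee b$. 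The critical point is relation (ii): applying \Cref{thm:blowup} to $\iota$, and using \Cref{lemma:normal_bundle}(1) to account for the twist of the normal bundle through the prior blowups of $\G'$, the polynomial $P_z(t)$ of \eqref{eq:Chern_poly_Y} translates, via \Cref{lemma:pushforward} at $z = t_b$ and the identifications $-[E] \leftrightarrow t_{\{G\}}$ and $-j_*p^*(t_b) \leftrightarrow \sum_{c \gtrdot a,\, \pi(c) \geq \pi(b)} t_c$, precisely into the polynomial $Q_a^b(\G)$ of \eqref{eq:rel_chern_poly}. The relations $Q_a^b(\G)$ associated to covering pairs $a \lessdot b$ not involving $G$ descend from relations in $R(X_\Delta, \G')$ via pullback and hold by the inductive hypothesis.

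For surjectivity, I would invoke \Cref{thm:blowup} once more, which yields the module decomposition
\[
H^*(Y(X_\Delta, \G);\ZZ) \simeq H^*(Y(X_\Delta, \G');\ZZ) \oplus \bigoplus_{k=1}^{d-1} H^*(\overline{\mathcal{K}}_G';\ZZ) \cdot t^k,
\]
where $d = \rk G$. The first summand lies in the image of $\varphi$ via the natural inclusion $\Bl_{\G'}\LL \hookrightarrow \Bl_\G \LL$ together with the inductive hypothesis applied to $\G'$. For the higher summands, the inductive hypothesis applied to $(X_{\Delta_G}, \G'')$ combined with \Cref{lemma:building_in_contr} identifies the generators $t_b$ of $R(X_{\Delta_G}, \G'')$ with nested sets $b \geq g$ in $\Bl_\G \LL$; the classes $-j_*p^*(t_b) \cdot (-[E])^{k-1}$ then correspond to the products $t_b \cdot t_{\{G\}}^{k-1}$ in $R(X_\Delta, \G)$, modulo the relations already imposed by (ii) and (iii).

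The principal obstacle is relation (ii): matching, on the nose, the polynomial $P_z(t)$ output by \Cref{thm:blowup} with the specific form of $Q_a^b(\G)$ in \Cref{def:cohomologyring} requires unwinding the normal bundle twist through the iterated blowups of $\G'$, propagating it through a chosen basis of $\Gamma_{\pi(b)} / \Gamma_{\pi(a)}$, and tracking signs consistently across the identifications $t_a \leftrightarrow \pm [D_a]$ and $-[E] \leftrightarrow t_{\{G\}}$. Once this dictionary is secured, the remaining verifications are largely formal.
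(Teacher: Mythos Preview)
Your outline matches the paper's strategy: induction on $(\lvert\G\rvert,\dim T)$, removal of the $\succ_\G$-last element $G$, identification of $\overline{\mathcal{K}}_G'$ with $Y(X_{\Delta_G},\G'')$, and application of \Cref{thm:blowup} together with \Cref{lemma:pushforward} to handle the Chern-polynomial relation when $F=G$. The paper organizes the bookkeeping slightly differently---it factors the map through an explicit injection
\[
\varphi\colon B[(\Bl_\G\LL)^+]\hookrightarrow B[(\Bl_{\G'}\LL')^+]\otimes_\ZZ B''[(\Bl_{\G''}\LL'')^+][t_g]
\]
and then checks $\varphi(I_\G)\subset J+I'\otimes 1+1\otimes I''$---but this is a packaging choice rather than a different idea.

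There is, however, a genuine gap in your treatment of relation~(ii). You split into two cases: $F=G$ (handled via \eqref{eq:Chern_poly_Y} and \Cref{lemma:pushforward}) and ``covering pairs not involving $G$'' (claimed to descend from $R(X_\Delta,\G')$). But there is a third case you have not addressed: $a\geq g$ with $F\neq G$. Here both $a$ and $b$ lie in $\Bl_\G(\LL)_{\geq g}\simeq\Bl_{\G''}(\LL'')$, so neither belongs to $\Bl_{\G'}(\LL')$ and the relation $Q_a^b(\G)$ does \emph{not} descend from $R(X_\Delta,\G')$. What is needed is to show that $Q_a^b(\G)$ maps to (a $t_g$-multiple of) the corresponding relation $Q_a^b(\G'')$ in $R(X_{\Delta_G},\G'')$. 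This is not automatic: the element $\tau_F$ in $R(X_\Delta,\G)$ is a sum over $\G_{\geq F}$, whereas the relation in $\G''$ involves $\tau''_{F''}$ summed over $\G''_{\geq F''}$ for the appropriate $F''\in F\vee G$, and one must argue that the extra terms vanish after multiplying by $t_c$ for $c\gtrdot a$. The paper isolates exactly this computation as \Cref{lemma:conti_iota}; without it your verification of well-definedness is incomplete. A similar (though easier) trichotomy arises for relation~(iii), where the mixed case $a\not\geq g$, $b>g$ requires a short computation rather than a one-line appeal to cleanness.
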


\begin{proof}
\begin{figure}
    \centering
    \begin{tikzcd}
  &   B[(\Bl_\G \LL)^+] \arrow[ldd, "I"', two heads] \arrow[dd, "f", two heads] \arrow[rd] \arrow[rr, "\varphi", hook]               
  &                              & B[(\Bl_{\G'} \LL')^+] \otimes_\ZZ B''[(\Bl_{\G''} \LL'')^+][t_G] \arrow[d, "{I'\otimes 1, 1 \otimes I''}", two heads] \\
  &                                                                                & A \arrow[r, phantom, sloped, "\subset"] \arrow[ld, "J", two heads] & R' \otimes_\ZZ R''[t_G]                                \\
R & H^*(Y(X,\G);\ZZ)                                                                           &                              &                                      
\end{tikzcd}
    \caption{Algebras and morphisms involved in the proof of {\Cref{thm:main}}.}
    \label{fig:diag_dim}
\end{figure}
We will prove the statement by induction on $\lvert \G \rvert$ and $\dim T$.
For the sake of notation we denote $B[t_a \mid a \in (\Bl_\G \LL) \setminus \{\hat{0}\}]$ by $B[(\Bl_\G \LL)^+]$ and $H^*(X_{\Delta_{G}},\mathbb{Z})$ as $B''$.
    Let $G$ be the last element of $\G$ and $g=\{G\}$ be the corresponding atom in $\Bl_\G \LL$.
    Define $\LL' \subset \LL$ as the subset of elements $F$ such that $G$ is not a $\G$-factor of $F$ and set $\G'= \G \setminus \{G\}$.
    Notice that $\G'$ is a building set for the poset $\LL'$.

    Set $\LL'' = \LL_{\geq G}$ and $\G''= \G_G$ as in \Cref{lemma:building_in_contr}.
    We have $\Bl_\G(\LL)= \Bl_{\G'}(\LL') \sqcup \Bl_{\G''}(\LL'')$ because $\Bl_{\G'}(\LL')= \Bl_{\G}(\LL)_{\not \geq g}$ and $\Bl_{\G''}(\LL'') \simeq \Bl_{\G}(\LL)_{\geq g}$. Throughout $I=I_{\G}$, $I'=I_{\G'}$ and $I''=I_{\G''}$ as in \Cref{def:cohomologyring}.
        %\item $B[t_a \mid a \in (\Bl_\G \LL) \setminus \{\hat{0}\}] \simeq B[t_a \mid a \in (\Bl_{\G'} \LL') \setminus \{\hat{0}\}] \otimes_B B[t_a \mid a \in (\Bl_{\G''} \LL'') \setminus \{\hat{0}\}][t_{G_m}]$
    %\end{enumerate}

Consider the ring homomorphism
\[ \varphi \colon B[(\Bl_\G \LL)^+] \to 
%B[(\Bl_\G \LL)^+_{\not \geq G}] \otimes_\ZZ B''[(\Bl_\G \LL)_{> G}][t_G] = 
B[(\Bl_{\G'} \LL')^+] \otimes_\ZZ B''[(\Bl_{\G''} \LL'')^+][t_g] \]
defined by $\varphi(c_r) = c_r \otimes 1$, $\varphi(t_a)= t_a \otimes 1$ if $a \not \geq g$, and $\varphi(t_a) = 1 \otimes t_a t_g$ if $a \geq g$ (with particular case $\varphi(t_g)= 1 \otimes t_g$).

Notice that $\varphi$ is injective because $B''$ is a free $\ZZ$-module and homogeneous because the isomorphism $\Bl_{\G''}(\LL'') \simeq \Bl_{\G}(\LL)_{\geq g}$ shifts the rank by one.
Consider the canonical projection $p \colon B[(\Bl_{\G'} \LL')^+] \otimes_\ZZ B''[(\Bl_{\G''} \LL'')^+][t_G] \to R' \otimes R''[t_G]$ with kernel generated by $I' \otimes 1$ and $1 \otimes I''$.

Consider $Y(X_\Delta, \G)$ as the blow up of $Y(X_\Delta, \G')$ along the layer $\overline{\mathcal{K}}'_G \simeq Y(X_{\Delta_G}, \G'')$. Recall from \Cref{thm:blowup} the subalgebra $A \subset R' \otimes R''[t_G]$ and the projection $q \colon A \to H^*(Y(X,\G); \ZZ)$.
The range of $p\varphi$ is contained in the subalgebra $A$ (c.f.\ \Cref{thm:blowup}) and hence define $f \colon B[(\Bl_\G \LL)^+] \to H^* (Y(X,\G); \ZZ)$ to be the composition $f=qp\varphi$.

\begin{lemma}
    The morphism $f$ is surjective.
\end{lemma}
\begin{proof}
    The generators of type $c_r \otimes 1$ and $t_a\otimes 1$ for $a \not \geq g$ are clearly in the range of the map.
    It enough to find a inverse image of $1\otimes mt_g$ with $m$ a monomial in the variables $t_a$ for $a\geq g$ and $c_r$ for $r \in \Ann(\Gamma_{G})$. By definition of the ideal $J$ and by the identity $\varphi(c_r)=c_r$, we can reduce to the case where $m$ is a monomial only in the variables $t_a$ for $a\geq g$.
    
    Consider a generator of type $1 \otimes t_at_bt_g$ for $a\geq b > g$ and let $c$ be the unique element such that $c \not \geq g$ and $b \in c \vee g$.
    \[ 1 \otimes t_at_bt_g = 1 \otimes t_a \sum_{b' \in c \vee g} t_{b'} t_g = t_c \otimes t_a t_g = f(t_ct_a).\]

\begin{comment}
    Consider $1\otimes t_at_bt_g$ with $a,b > g$ non necessarily comparable. For any $z > g$, let $c_z$ be the unique element such that $z\in c_z\vee g$  with $c_z \ngeq g$. Then:
    \begin{align*}
        1\otimes t_at_bt_g &= 1\otimes t_{a\wedge b} \sum_{x\in a \vee b} t_x \ t_g\\
        &= 1\otimes \sum_{x\in a\vee b}t_x\sum_{y\in c_{a\wedge b}\vee g}t_y \ t_g\\ 
        &= t_{c_{a\wedge b}}\otimes \sum_{x\in a\vee b} t_x \ t_g = f\left(t_{c_{a\wedge b}}\sum_{x\in a\vee b}t_x\right)
    \end{align*}
    This specializes to the case $a\geq b > g$. In general, one can see by induction that:
    \begin{align*}
        1\otimes\prod_{j=1}^{n+1}t_{a_j} = f\left(t_{c_{a_1\wedge a_2}}\sum_{x_1\in a_1\vee a_2} t_{c_{x_1 \wedge a_3}}\cdots \sum_{x_{n}\in x_{n-1}\vee a_n}t_{c_{x_{n-1}\wedge a_{n+1}}} \ \sum_{z \in a_1\vee\cdots\vee a_{n+1}}t_z \right)
    \end{align*}
\end{comment}
Inductively, given a flag $a_1 \leq \ldots  \leq a_k$ with $a_j > g$ for all $j$, let $c_{a_j}$ be the unique element with $c_{a_j}\ngeq g$ and $a_j\in c_{a_j}\vee g$; we have the identity
$$1\otimes t_{a_1}\cdots t_{a_k}t_g = f(t_{c_{a_1}}\cdots t_{c_{a_{k-1}}}t_{a_k}).$$
For a general element $1\otimes t_{a_1}\cdots t_{a_k}t_g$ with $a_1,\ldots,a_k$ not a flag, by \Cref{LCflags} it can be written as linear combination of products on flags, and therefore it belongs to the range of $f$.
\end{proof}

%It is enough to prove that $I=\ker f$.

\begin{lemma} \label{lemma:conti_iota}
  %  If $G \not \leq \pi(a)$, then 
  %  \[t_a \tau_G = \sum_{\substack{c \gtrdot a \\ \pi(c) \geq G}} t_c.\]
    If $F \neq G$, $c> g$, and $\pi(c)\geq F$ then
    \[ \varphi(\tau_F^k t_c) \equiv 1 \otimes  \tau^{\prime \prime k}_{F''}t_c t_g \mod J+ I'\otimes 1 + 1 \otimes I''\]
    where $F'' \in F \vee G$ is the unique element such that $\pi(c) \geq F''$.
\end{lemma}

\begin{proof}
Notice that 
\[ (\G_{\geq F})''= \G''_{\geq F \vee G} := \bigsqcup_{K \in F \vee G} \G''_{\geq K}.\]
We have the equalities
    \begin{align*}
        \varphi(\tau_F^k t_c) &
        %= \left(-\sum_{H\in\G_{\geq F}} t_{\{H\}}\right)^k t_c \\ &
        = (-\sum_{H\in\G_{\geq F}}t_{\{H\}})^k\otimes t_c t_g \\
        & \equiv 1\otimes (-\sum_{H\in\G_{\geq F}}\sum_{H^{\prime\prime}\in G\vee H }t_{\{H''\}})^k t_ct_g \mod J\\ & = 1\otimes (-\sum_{H''\in\G''_{\geq F''}}t_{\{H''\}}- \sum_{H''\in(\G_{\geq F})^{''}_{\ngeq F''}}t_{\{H''\}})^k t_ct_g \\
        &= 1\otimes \sum_{\ell = 0}^{k}\binom{k}{\ell}(-\sum_{H''\in\G''_{\geq F''}}t_{\{H''\}})^{k-\ell}(-\sum_{H''\in (\G_{\geq F})^{''}_{\ngeq F''}}t_{\{H''\}})^\ell t_ct_g \\ & \equiv 1\otimes ( - \sum_{H''\in\mathcal{G''}_{\geq F''}} t_{\{H''\}})^k t_c t_g \mod 1\otimes I'' \\ & =  1\otimes \tau^{\prime\prime k}_{F''} t_c t_g.
    \end{align*}
    The first equality holds by the definition of $\varphi$, the second congruence by \Cref{lemma:pullback}, the fourth equality is the binomial expansion.
    The fifth equivalence is due to the fact that for $H''\in(\G_{\geq F})^{\prime\prime}$ with $H''\ngeq F''$, one has $\{H''\}\vee c = \emptyset$, and hence: 
    \[t_{\{H''\}}t_c \equiv t_{\{H''\}\wedge c}\sum_{d\in\{H''\}\vee c} t_d = 0 \mod I''.\]
    This completes the proof.
\end{proof}

\begin{lemma}
    The ideal $I$ is contained in $\ker f$.
\end{lemma}
\begin{proof}
    We will show that $\varphi(I)\in J + I'\otimes 1 + 1 \otimes I''$.
    \begin{enumerate}
        \item Let $a \in \Bl_\G \LL \setminus \hat{0}$, $r \not \in \Ann(\Gamma_{\pi(a)})$ and consider $\varphi(c_rt_a)$:
        \begin{enumerate}
            \item if $a \not \geq g$ then 
            \begin{align*}
                \varphi(c_rt_a) &=c_rt_a \otimes 1 & & \\
                & \equiv 0 & & \mod I' \otimes 1
            \end{align*}
            \item if $a \geq g$ and $r \not \in \Ann(\Gamma_{G})$ then 
            \begin{align*}
                \varphi(c_rt_a) &=c_r\otimes t_at_g & & \\
                & \equiv 0 & & \mod J
            \end{align*}
            \item if $a \geq g$ and $r \in \Ann(\Gamma_{G})$ then 
            \begin{align*}
                \varphi(c_rt_a) &=c_r\otimes t_at_g & & \\
                & \equiv 1 \otimes c_rt_at_g & & \mod J \\
                & \equiv 0 & & \mod 1 \otimes I''
            \end{align*}
        \end{enumerate}
        \item Let $a \lessdot b$, $F$ be the unique element such that $b \in \{F\} \vee a$ and consider the element $Q_a^b(\G)$:
        \begin{enumerate}
            \item If $a \geq g$ then 
            \begin{align*}
                \varphi(Q_a^b(\G)) &= -\sum_{\substack{c \gtrdot a \\ \pi(c) \geq \pi(b)}} \varphi (t_c \tau_F^{d-1} )  + (1 \otimes t_a t_g) \prod_{j=1}^{d}\left(- \sum_{r\in\mathcal{R}} \min\left(0, \langle\chi_j, r\rangle\right) c_r \otimes 1 \right) \\
                &\equiv -\sum_{\substack{c \gtrdot a \\ \pi(c) \geq \pi(b)}} \varphi (t_c \tau_F^{d-1} )  + 1 \otimes t_a t_g \prod_{j=1}^{d}\left(- \sum_{r\in\mathcal{R}''} \min\left(0, \langle\chi_j, r\rangle\right) c_r \right)
            \end{align*}
            the first summand is 
            \begin{align*}
                \sum_{\substack{c \gtrdot a \\ \pi(c) \geq \pi(b)}} \varphi (t_c \tau_F^{d-1} ) & \equiv 1 \otimes \Bigl(\sum_{\substack{c \gtrdot a \\ \pi(c) \geq \pi(b)}}  t_c t_g  \Bigr)  \tau_{F''}^{\prime \prime d-1}
            \end{align*}
            by \Cref{lemma:conti_iota} where $F'' \in F \vee G$ and $F'' \leq \pi (b)$.
            We obtained the identity
            \[ \varphi(Q_a^b(\G)) \equiv 1 \otimes Q_a^b(\G'') t_g \in 1 \otimes I''.\]
            
            \item If $F \neq G$ and $a \not \geq g$, then $b \not \geq g$ and
            \begin{align*}
                \varphi(Q_a^b(\G)) &=Q_a^b(\G') \otimes 1 \in I' \otimes 1.
            \end{align*}
            
            \item If $F=G$ then $a \not \geq g$ and 
            \begin{align*}
                \varphi(Q_a^b(\G)) =& \Bigl( - 1 \otimes t_bt_g -\sum_{\substack{c \gtrdot a \\ \pi(c) > \pi(b)}} t_c \otimes 1  \Bigr) \Bigl(-1 \otimes t_g - \sum_{H \in \G_{>G}} t_h \otimes 1 \Bigr)^{d-1} + \\
                &+t_a \prod_{j=1}^{d}\left(- \sum_{r\in\mathcal{R}} \min\left(0, \langle\chi_j, r\rangle\right) c_r \right) \otimes 1
            \end{align*}
            Notice that $\G_{>G}$ is a subset of $\G''$, hence for each $H \in \G_{>G}$ makes sense the notation $1 \otimes t_h$, where $h = \{H\}$ is the $\G''$-nested set associated to $H$.
            Moreover, for such $h$ we have $\iota^*(t_h)=t_h$.
            
            The term $(-1 \otimes t_g - \sum_{H \in \G_{>G}} t_h \otimes 1)^{d-1}$ is equal to 
            \begin{align*}
                &(-1)^{d-1}\sum_{i=0}^{d-1} \binom{d-1}{i} (\sum_{H \in \G_{>G}} t_h)^{d-1-i} \otimes t_g^i \\
                &= (-1)^{d-1}\sum_{i=1}^{d-1} \binom{d-1}{i} 1 \otimes (\sum_{H \in \G_{>G}} t_h)^{d-1-i}  t_g^i +  (-\sum_{H \in \G_{>G}} t_h)^{d-1} \otimes 1.
            \end{align*}
            Moreover, $\sum_{\substack{c \gtrdot a \\ \pi(c) > \pi(b)}} t_c \otimes t_g = 1 \otimes t_bt_g \sum_{H \in \G_{>G}} t_h$.
            Hence, by using \Cref{lemma:pushforward} we have
            \begin{align*}
                \varphi(Q_a^b(\G)) =& (-1)^d\sum_{i=0}^{d-1} \binom{d-1}{i} 1 \otimes (\sum_{H \in \G_{>G}} t_h)^{d-1-i}  t_bt_g^{i+1}+ \\
                &+ (-1)^d\sum_{i=1}^{d-1} \binom{d-1}{i} 1 \otimes (\sum_{H \in \G_{>G}} t_h)^{d-i} t_b t_g^i + \iota_*(t_b) \otimes 1 \\
                =& \sum_{i=1}^{d} 1 \otimes t_b \binom{d}{i} \Bigl(- \sum_{H \in \G_{>G}} t_h \Bigr)^{d-i} (-t_g)^i + \iota_*(t_b) \otimes 1 \in J.
            \end{align*} 
           where the last term is the polynomial appearing in eq.\ \eqref{eq:Chern_poly_Y}.
        \end{enumerate}
        \item Let $a,b \in \Bl_\G \LL \setminus \hat{0}$ be two incomparable elements and consider $f(t_at_b - t_{a \wedge b} \sum_{c \in a \vee b} t_c)$:
        \begin{enumerate}
            \item if $a,b \not \geq g$ then 
            \begin{align*}
                \varphi(t_at_b - t_{a \wedge b} \sum_{c \in a \vee b} t_c) &= (t_at_b - t_{a \wedge b} \sum_{c \in a \vee b} t_c) \otimes 1 \in I' \otimes 1
            \end{align*}
            \item if $a,b \geq g$ then
            \begin{align*}
                \varphi(t_at_b - t_{a \wedge b} \sum_{c \in a \vee b} t_c) &= 1 \otimes (t_at_b - t_{a \wedge b} \sum_{c \in a \vee b} t_c) t_g^2  \in 1 \otimes I''.
            \end{align*}
            \item if $a \not \geq g$ and $b = g$ then 
            \[\varphi(t_at_g - \sum_{c \in a \vee g} t_c) = t_a \otimes t_g - 1 \otimes \iota^*(t_a) t_g \in J.\]
            \item If $a \not \geq g$ and $b > g$ then 
            \begin{align*}
                \varphi(t_at_b - t_{a \wedge b} \sum_{c \in a \vee b} t_c) &= t_a \otimes t_bt_g - t_{a \wedge b}\otimes \sum_{c \in a \vee b} t_c t_g & & \\
                & \equiv 1 \otimes (\sum_{a' \in a \vee g} t_{a'} t_bt_g - \sum_{c \in a \vee b}  \sum_{e \in (a \wedge b) \vee g} t_e t_c t_g ) && \mod J \\
                & \equiv 1 \otimes (\sum_{a' \in a \vee g} t_{a'} t_bt_g - \sum_{c \in a \vee b}  \sum_{\substack{e \in (a \wedge b) \vee g \\ e \leq c}} t_e t_c t_g ) && \mod 1 \otimes I'' \\
                & = 1 \otimes (\sum_{a' \in a \vee g} t_{a'} t_bt_g - \sum_{c \in a \vee b}  \sum_{\substack{e \in (a\vee g) \wedge b \\ e \leq c}} t_e t_c t_g ) && \\
                & = 1 \otimes \sum_{a' \in a \vee g} (t_{a'} t_b - t_{a' \wedge b} \sum_{c \in a' \vee b}   t_c) t_g && \\
                & \equiv 0 && \mod 1 \otimes I''.
            \end{align*}
        \end{enumerate}
    \end{enumerate}
    We have dealt with all possible cases.
\end{proof}
This completes the proof of \Cref{prop:well_surj}
\end{proof}

\begin{proof}[Proof of {\Cref{thm:main}} and of \Cref{thm:Grobner_Basis}]
By \Cref{prop:well_surj} there exists a surjective map
$ R \twoheadrightarrow H^*(Y(X,\G);\ZZ)$, and so $\Hilb(R) \geq \Hilb (H^*(Y(X,\G)))$.
Together with \Cref{lemma:H>R}, we obtain 
\[\Hilb(R) = \Hilb (H^*(Y(X_\Delta,\G)))= \Hilb \left( \faktor{\ZZ[c_r,t_a]}{\In(\alpha_{\Delta,\G})} \right)\]
%\todo{riferirsi al lemma corretto} and by eq.\ \eqref{eq:additive_cohomology} we deduce that 
\Cref{thm:Grobner_Basis} follows from the equality of the first and last term and together with \Cref{remark:torsion_free}.

In particular, $R$ is a free $\ZZ$-module. 
Finally, the surjection $ R \twoheadrightarrow H^*(Y(X,\G);\ZZ)$ is indeed an isomorphism.
\end{proof}

\section{A remark on the strata of wonderful model}
\label{sec:strata}

Let $\G^+=\G \sqcup \mathcal{R}$ be the set that parameterize all irreducible components of $Y(X_\Delta,\G) \setminus M_\AAA$.
Recall that, for $a \in \Bl_\G \LL$, $D_a$ is the layer parameterized by $a$.
Moreover, we consider $D_r \subset Y(X_\Delta,\G)$, i.e.\ the strict transform of toric divisor $H_r$ associated to the ray $r\in \mathcal{R}$.
For $a\in \Bl_\G (\LL)$ and $S \subset \mathcal{R}$, the intersection $D_{a,S}:=D_a \cap \bigcap_{r \in S} D_r$ is non-empty if and only if the set $S$ span a cone and $S \subset \Ann \Gamma_{\pi(a)}$.
If $D_{a,S}$ is non-empty then it is connected by the equal-sign property of $X$.

For any lattice $\Lambda$ we set $\Lambda_\CC= \Lambda \otimes_\ZZ \CC$. Given two layers $G>F \in \LL$ we consider the normal bundle $N_F^G := (N_{\overline{G}} \overline{F})_p$ at a point $p \in G$. Since $N_F^G$ can be identified canonically (via translation to the identity of $G$) with 
\[N_F^G = \faktor{\Ann(\Gamma_F)_\CC}{\Ann(\Gamma_G)_\CC}, \]
we omit the dependency of the point $p$.

%For each $G\in \G$, consider the map $h_G \colon M_\AAA \to N^G_{\hat{0}} \setminus \{0\}$ obtained as the composition
%\[ M_\AAA \subset T \to T/G \subset N^G_{\hat{0}}\]
%where the last inclusion is the restriction of translation $\CC^{\codim G} \to \CC^{\codim G}$ by the vector $(-1,-1, \dots, -1)$.

\begin{theorem}[{{\cite[Definition 1.1, Theorem 1.3]{Li09}}}]
    If $\G \neq \emptyset$, then the wonderful model $Y(X_\Delta,\G)$ is the closure of the image of 
    \[ M_\AAA \hookrightarrow \prod_{G \in \G} \Bl_{\overline{G}} X_\Delta.\]
    %\[ M_\AAA \hookrightarrow X_\Delta \times \prod_{G \in \G} \mathbb{P}(N_{\hat{0}}^G)\]
    %where on the first component is the canonical inclusion $M_\AAA \subset T \subset X_\Delta$ and on the other components the map $h_G$.
\end{theorem}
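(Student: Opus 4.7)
The plan is essentially to unwind Li's construction, which has already been recalled in Section \ref{sec:toric_wonderful_model}. By \cite[Theorem 1.3]{Li09}, cited there, given a building set $\G = \{G_1, \ldots, G_m\}$ ordered so that $\G_k = \{G_1, \ldots, G_k\}$ is a building set for every $k$, the closure of $M_\AAA$ in $\prod_{G \in \G} \Bl_{\overline{G}} X_\Delta$ is isomorphic to the iterated blowup $\Bl_{\widetilde{G}_m} \cdots \Bl_{\widetilde{G}_2} \Bl_{G_1} X_\Delta$ along dominant transforms. Since we have used this iterated blowup as our working definition of $Y(X_\Delta,\G)$ throughout the paper, the two descriptions coincide.

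Concretely, I would first produce the morphism in one direction. The universal property of the product, together with the canonical birational maps $\Bl_{\overline{G}} X_\Delta \to X_\Delta$, yields from the identity on $M_\AAA \subset X_\Delta$ a natural morphism
\[ M_\AAA \to \prod_{G \in \G} \Bl_{\overline{G}} X_\Delta \]
lifting the diagonal inclusion. Because $M_\AAA$ is disjoint from every center $\overline{G}$, this morphism is a locally closed embedding, so one may take its scheme-theoretic closure $\overline{Y}$. Then I would construct a map $Y(X_\Delta,\G) \to \overline{Y}$ by composing the structure maps of successive blowups with the corresponding projections, and observe that both varieties are birational to $X_\Delta$ with the same open subset $M_\AAA$.

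The identification is then obtained inductively on $|\G|$: the induction step is precisely Li's proof that blowing up the dominant transform of the maximal element of $\G_k$ inside the partial model obtained from $\G_{k-1}$ adds exactly one new factor to the closure in the product. This works because the ordering condition on $\G$ ensures the dominant transforms are smooth centers meeting all strata cleanly, which is precisely the content of our well-behaved nested set combinatorics of Section \ref{sec:blow_up_poset}.

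There is no serious obstacle here: the statement is a restatement of Li's result, and the only verification to keep straight is the compatibility between the partial order on $\G$ as a subset of $\LL$ (used to order the blowups combinatorially) and the intersection order of the subvarieties $\overline{G} \subset X_\Delta$ (used in the geometric construction). The statement will be used in Section \ref{sec:strata}, where the product description is more convenient than the iterated one for describing strata and identifying their normal bundles via the decomposition $N_F^G = \Ann(\Gamma_F)_\CC/\Ann(\Gamma_G)_\CC$.
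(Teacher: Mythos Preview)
The paper does not prove this statement at all: it is stated with attribution to \cite[Definition 1.1, Theorem 1.3]{Li09} and used as a black box. Indeed, in \Cref{sec:toric_wonderful_model} the closure-in-the-product description is taken as the \emph{definition} of the wonderful model (Li's Definition 1.1), and Li's Theorem 1.3 is then quoted to identify it with the iterated blowup; the theorem in \Cref{sec:strata} merely restates this for convenient reference. Your sketch of how Li's argument goes is accurate, but no proof is expected here.
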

%\begin{proof}
%    The result follows easily from the definition of toric wonderful model \cite{DeConciniGaiffi2018}, the characterization of Li \cite[Definition 1.1, Theorem 1.3]{Li09}, and the observation that the normal bundle of $\overline{G}$ in $X_\Delta$ is trivial.
%    The product in \cite[Definition 1.1]{Li09} specialized to our case has redundant copies of $X_\Delta$ and $\overline{G}$ for $G \in \G$. Getting rid of these factors we obtain the above description.
%\end{proof}

\begin{definition}
    Let $S \subset \mathcal{R}$ be a cone in $V$ and consider the projection $p_S \colon V \to V/\operatorname{span}(S)$. The set 
    \[ \{p_S(C) \mid C \in \Delta \textnormal{ and } C \supset S\}\]
    form a smooth complete fan that we denote by $\Delta_S$.
    Let $\G(S)$ be the set
    \[ \G(S) := \{ G \in \G \mid \Ann(\Gamma_G) \supset S\}. \]
\end{definition}
    Notice that the map $p_S$ corresponds to the inclusion of the orbit closure $\overline{\mathcal{O}}_S \simeq X_{\Delta_S}$ of $S$ in the toric variety $X_{\Delta}$.
    Moreover, by the equal sign property, the intersection of the hypertorus closure $\overline{\mathcal{K}_{\Gamma, \phi}}$ with the orbit $\mathcal{O}_S$ is a toric arrangement with building set $\G(S)$.

\begin{lemma}
\label{lemma:strata}
    For any $\G^+$-nested set $(a,S)$ the intersection $D_{a,S}$ is
    %\[ D_a \simeq Y(X_{\Delta \cap \Ann \Gamma_{\pi(a)}}; \G_{\pi(a)}) \times \prod_{i=1}^l Y^{\PP}(\PP(N_{K_i}^{G_i}), (\G_{\leq G_i})_{K_i})\]
    \[ D_{a,S} \simeq \prod_{i=1}^l Y^{\PP}(\PP(N_{K_i}^{G_i}), (\G_{\leq G_i})_{K_i}) \times Y(X_{\Delta_S \cap \Ann \Gamma_{\pi(a)}}; \G(S)_{\pi(a)})\]
    where $D_a$ is a connected component of $\bigcap_{i=1}^l D_{G_i}$ and let $K_i$ be the unique element of $\bigvee_{G_j < G_i} G_j$ such that $K_i < \pi(a)$
    %\todo{o equivalentemente $K_i< G_i$}.
\end{lemma}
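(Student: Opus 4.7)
The plan is to argue by induction on $\lvert \G \rvert$, using the iterative blow-up description $Y(X_\Delta, \G) = \Bl_{\overline{\mathcal{K}}'_G} Y(X_\Delta, \G')$ where $G$ is the last element of $\G$ in the order $\succ_\G$ (hence minimal in $\LL$ among the building elements) and $\G' = \G \setminus \{G\}$. By \cite[Theorem 3.1]{DeConciniGaiffi2018} the center satisfies $\overline{\mathcal{K}}'_G \simeq Y(X_{\Delta_G}, (\G')_G)$, which supplies the inductive description on the exceptional divisor. The base case $\G = \emptyset$ is immediate: the $\G^+$-nested sets are of the form $(\hat 0, S)$ for a cone $S$ of $\Delta$, and $D_{\hat 0, S} = \overline{\mathcal{O}}_S \simeq X_{\Delta_S}$ is the standard toric orbit closure, matching the formula with an empty product and $\G(S)_{\hat 0} = \emptyset$.

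For the inductive step, write $a = (S_0, x_0)$ as in \Cref{remark:BlG}. In the first case, $G \notin S_0$, so $(a, S)$ is still $(\G')^+$-nested, and by induction the analogous stratum in $Y(X_\Delta, \G')$ already has the claimed shape for $\G'$. By minimality of $G$, the final blow-up of $\overline{\mathcal{K}}'_G$ either misses this stratum or meets it transversally along a sublocus that does not alter the decomposition: indeed $G \not\leq G_i$ for every $G_i \in S_0$, so $(\G_{\leq G_i})_{K_i} = (\G'_{\leq G_i})_{K_i}$, and $G \not\leq \pi(a)$, so $\G(S)_{\pi(a)}$ agrees with its $\G'$-analogue. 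In the second case, $G \in S_0$, and $D_a$ is contained in the exceptional divisor $E = \PP(N_{\overline{\mathcal{K}}'_G} Y(X_\Delta, \G'))$. Iterated application of \Cref{lemma:normal_bundle} through the blow-ups of $\G'$ shows that this normal bundle is the pull-back of $N_{\overline{\KK}_G} X_\Delta$ twisted by line bundles; since $\overline{\KK}_G \subset X_\Delta$ is toric and cut out transversally by characters, the projectivization is a trivial $\PP(N_{\hat 0}^G)$-bundle on $\overline{\mathcal{K}}'_G$. Applying the inductive hypothesis to the nested set $(S_0 \setminus \{G\}, x_0)$ on $\overline{\mathcal{K}}'_G$ with building set $(\G')_G$ (via \Cref{lemma:building_in_contr}) and multiplying by the fibre factor $Y^{\PP}(\PP(N_{\hat 0}^G), \emptyset) = \PP(N_{\hat 0}^G)$ contributed by the atom $G$ itself (noting $\G_{\leq G} = \{G\}$ and $K_{i_G} = \hat 0$) yields the claimed product.

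The main obstacle is Case 2: recovering the Li-style projective wonderful models $Y^{\PP}(\PP(N_{K_i}^{G_i}), (\G_{\leq G_i})_{K_i})$ for the remaining $G_i \in S_0$ requires tracking how the earlier blow-ups of $H \in \G'$ with $H \leq G_i$ restrict along each fibre of the projectivized normal bundles of intermediate strata in the blow-up tower. The identification of the contracted building set $(\G_{\leq G_i})_{K_i}$ then follows by applying \Cref{lemma:building_in_contr} in the lower interval $[\hat 0, G_i]$ of $\LL$. A secondary technical point is the residual toric factor: the equal-sign property guarantees that $\overline{\KK}_{\pi(a)} \cap \overline{\mathcal{O}}_S$ is a smooth toric variety with fan $\Delta_S \cap \Ann \Gamma_{\pi(a)}$, and the compatibility of the restriction of $\G(S)$ with the contraction at $\pi(a)$, again via \Cref{lemma:building_in_contr}, furnishes the building set $\G(S)_{\pi(a)}$ on this toric piece.
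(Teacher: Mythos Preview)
Your inductive scheme on $\lvert \G\rvert$ via the blow-up tower is a different route from the paper's. The paper works instead with Li's closure-in-product description $Y(X_\Delta,\G)\subseteq\prod_{H\in\G}\Bl_{\overline H}X_\Delta$: it first shows, for a \emph{single} $H\in\G$, that the divisor $D_H$ splits as $Y^{\PP}(\PP(N_{\hat0}^H),\G_{\leq H})\times Y(X_{\Delta\cap\Ann\Gamma_H};\G_H)$ by analysing the two projections onto $\prod_{G\leq H}$ and $\prod_{G\not\leq H}$, then inducts on the nested set by peeling off a minimal element of $a$ and using $D_a\subset D_H$, and finally invokes $D_S\simeq Y(X_{\Delta_S};\G(S))$ from \cite[Theorem~5.1]{DeConciniGaiffi2019}. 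This avoids the blow-up tower bookkeeping altogether.

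Beyond the difference in strategy, your Case~1 contains an actual error. You assert that because $G\not\leq\pi(a)$, the contracted building set $\G(S)_{\pi(a)}$ agrees with its $\G'$-analogue; this is false whenever $G\in\G(S)$, i.e.\ whenever $S\subset\Ann\Gamma_G$. In that situation the joins $G\vee\pi(a)$ contribute new layers to $\G(S)_{\pi(a)}$ that are absent from $\G'(S)_{\pi(a)}$, and correspondingly the center $\overline{\mathcal K}'_G$ \emph{does} meet the stratum $D'_{a,S}$ in a locus of positive codimension in the toric factor. The strict transform is therefore not $D'_{a,S}$ itself but the blow-up of its toric factor along the components of $G\vee\pi(a)$, which is exactly what changes $\G'(S)_{\pi(a)}$ into $\G(S)_{\pi(a)}$. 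So the decomposition \emph{is} altered in the last factor, and the inductive step must track this. Combined with the gap you yourself flag in Case~2 --- matching the projective factors under the contraction at $G$ --- the argument as written is a sketch rather than a proof; the paper's product-closure approach sidesteps both issues.
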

%\begin{proof}
%    Firstly consider the case $S=\emptyset$.
%    Now, we consider an arbitrary stratum $D_{a,S}= D_a \cap D_S$.
%    As observed at the beginning of this section, the stratum $D_{a,S}$ is a stratum in the subvariety $D_S$. Since $D_S\simeq Y(X_{\Delta_S}, \G(\Gamma_S))$, we apply the first part of the proof to the toric arrangement induced on the torus $\mathcal{O}_S$. The result follows easily.
%\end{proof}
\begin{proof}
    Firstly, we show that 
    \[ D_H \simeq Y^{\PP}(\PP(N_{\hat{0}}^H), \G_{\leq H}) \times Y(X_{\Delta \cap \Ann \Gamma_H}; \G_H) .\]
    Recall that the closure of $H$ in $X_\Delta$ is $\overline{H} \simeq X_{\Delta \cap \Ann \Gamma_H }$ (see \cite[Section 3]{DeConciniGaiffi2018}).
    Let $Y= Y(X_\Delta;\G)$ as subset of $\prod_{G \in \G} \Bl_{\overline{G}} X_\Delta$ and $p_G$ be the projection on the factor $\Bl_{\overline{G}} X_\Delta$.
    Notice that
    \begin{equation}
    \label{eq:proj_D_H}
        p_G(D_H)= \begin{cases}
        \Bl_{\overline{G\cap H}} \overline{H} & \text{if } G \not \leq H, \\
        \overline{H} \times \PP(N_{\hat{0}}^G) & \text{if } G \leq H
    \end{cases}
    \end{equation}
    because the normal bundle to $\G$ is trivial and the exceptional divisor is $\overline{G}\times \PP(N_{\hat{0}}^G)$.
    Consider $p_1$ and $p_2$ the projection from $\prod_{G \in \G} \Bl_{\overline{G}} X_\Delta$ to $\prod_{G \in \G_{ \leq H}} \Bl_{\overline{G}} X_\Delta$ and to $\prod_{G \in \G_{\not \leq H}} \Bl_{\overline{G}} X_\Delta$ respectively.
    By eq.\ \eqref{eq:proj_D_H} we have $p_1(D_H) \subseteq \prod_{G \in \G_{\leq H}} \overline{H} \times \PP(N_{\hat{0}}^G)$ and $p_2(D_H) \subseteq \prod_{G \in \G_{\not \leq H}} \Bl_{\overline{G \cap H}} X_{\Delta \cap \Ann \Gamma_H}$.
    
    Motivated by the wonderful model of subspace arrangements \cite{DeConciniProcesi95}, we regard $N_{\hat{0}}^G$ as the quotient $N_{\hat{0}}^H/ N_{G}^H$.
    The subspace $p_1(D_H)$ is contained in $\prod_{G \in \G_{\leq H}} \overline{H} \times \PP(N_{\hat{0}}^G)$ that is mapped to $\prod_{G \in \G_{\leq H}} \PP(N_{\hat{0}}^G)$ and the image of $p_1(D_H)$ is contained in the hyperplane wonderful model $Y^{\PP}(\PP(N_{\hat{0}}^H)), \G_{\leq H}) \subset \prod_{G \in \G_{\leq H}} \PP(N_{\hat{0}}^G)$.
    
    On the other hand, the blow-down maps induce a map $\Bl_{\overline{G \cap H}} X_{\Delta \cap \Ann \Gamma_H} \hookrightarrow \prod_{L \in G\vee H} \Bl_{\overline{L}} X_{\Delta \cap \Ann \Gamma_H}$ and by getting rid of multiple factors we obtain an embedding
    \[ p_2(D_H) \hookrightarrow \prod_{L \in \G_H} \Bl_{\overline{L}} X_{\Delta \cap \Ann \Gamma_H}.\]
    Since $p_2(D_H)$ is the closure of the inverse image of $H \setminus \bigcup_{G \in \G_{\not \leq H}} G= H \setminus \bigcup_{L \in \G_H} L$ along the projection $\prod_{G \in \G_{\not \leq H}} \Bl_{\overline{G}} X_\Delta \to X_\Delta$, we have $p_2(D_H) \hookrightarrow Y(X_{\Delta \cap \Ann \Gamma_H}; \G_H)$.

    We have shown that $p_1$ and $p_2$ induce a map $D_H \to Y^{\PP}(\PP(N_{\hat{0}}^H)), \G_{\leq H}) \times Y(X_{\Delta \cap \Ann \Gamma_H}; \G_H)$. This map is injective and the two varieties are smooth, connected, and of the same dimension. In particular they are isomorphic.

    Let $a$ be a nested set, we claim that
    \[ D_a \simeq \prod_{i=1}^l Y^{\PP}(\PP(N_{K_i}^{G_i}), (\G_{\leq G_i})_{K_i}) \times Y(X_{\Delta \cap \Ann \Gamma_{\pi(a)}}; \G_{\pi(a)})\]
    Let $H\in \G$ be a minimal element in the nested set $a$, and $a'$ be the corresponding nested set in $\G_H$.
    We have $D_a \subset D_H \simeq Y^{\PP}(\PP(N_{\hat{0}}^H)), \G_{\leq H}) \times Y(X_{\Delta \cap \Ann \Gamma_H}; \G_H)$, and denoted by $\pi_2$ the projection on the second factor $\pi_2(D_a) \subseteq D_{a'}$.
    The claimed isomorphism follows by induction, noticing that $(\G_H)_{\pi(a')}= \G_{\pi(a)}$ and $\Ann \Gamma_{\pi(a)} = \Ann \Gamma_H \cap \Ann \Gamma_{\pi(a')}$.

    Finally, the general case follows from 
    \[D_S \simeq Y(X_{\Delta_S};\G(S))\]
    that is stated in \cite[Theorem 5.1]{DeConciniGaiffi2019}.
 \end{proof}

\Cref{lemma:strata} predicts that the toric wonderful models form a module over the operad of hyperplane wonderful models (cf.\ \cite{Coron23} for exact definition of ``operads'' associated to matroids).

We finish the article with the example of a divisorial arrangements of subvarieties in which the poset of layers is a local lattice but not a geometric poset.
Our techniques can be applied to more general cases, such as the one in the following example.

\begin{example}
\label{nonposetoflayers}
    Consider two smooth quadrics $Q_1, Q_2$ in $\mathbb{P}^3$ which intersection is a line $\ell_0$ and a cubic $C$.
    Let $H$ be an hyperplane containing $\ell_0$ and not tangent neither to $Q_1$ nor $Q_2$.
    Let $\ell_1$ and $\ell_2$ the lines in $H \cap Q_1$ and $H \cap Q_2$ different from $\ell_0$.
    Define the three distinct points $q_1=\ell_0 \cap \ell_1$,  $q_2=\ell_0 \cap \ell_2$ and  $p=\ell_1 \cap \ell_2$ and notice that $H \cap C = p \cup q_1 \cup q_2$.
    %For a generic hyperplane $H$ we also have $q_1 \neq q_2$.
    Let $X=\Bl_{q_1 \cup q_2}(\mathbb{P}^3)$ and $\tilde{H}$, $\tilde{Q}_i$, $\tilde{\ell}_{j}$ be the strict transform of the corresponding subvarieties.
    The arrangement $\{\tilde{H}, \tilde{Q}_1, \tilde{Q}_2\}$ in $X$ is a collection of smooth connected divisors which union is a normal crossing divisor.
    However, the intersection $\tilde{H} \cap  \tilde{Q}_1 \cap \tilde{Q}_2 = \tilde{\ell}_0 \sqcup p$ is not equidimensional and the poset of flats is not a geometric poset.

    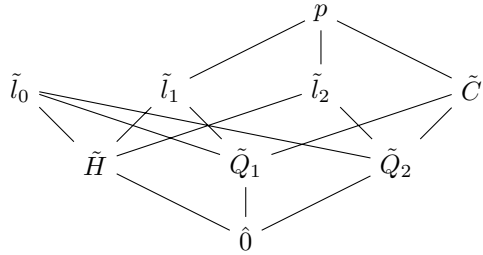
\begin{figure}
        \centering
        \begin{tikzpicture}
\node (min) at (0,-1) {$\hat0$};
\node (H) at (-2,0) {$\tilde{H}$};
\node (Q1) at (0,0) {$\tilde{Q}_1$};
\node (Q2) at (2,0) {$\tilde{Q}_2$};

\node (l0) at (-3,1) {$\tilde{l}_0$};
\node (l1) at (-1,1) {$\tilde{l}_1$};
\node (l2) at (1,1) {$\tilde{l}_2$};
\node (C) at (3,1) {$\tilde{C}$};

\node (p) at (1,2) {$p$};

\draw (min) -- (Q2) -- (l2) -- (H) -- (l1) -- (Q1) -- (l0) -- (H) -- (min) -- (Q1) -- (C) -- (Q2) -- (l0);
\draw (l1) -- (p) -- (l2);
\draw (C) -- (p);
\end{tikzpicture}
        \caption{The poset of flats of an arrangement of subvarieties.}
        \label{fig:non_geometric_poset}
    \end{figure}
\end{example}

\bibliographystyle{amsalpha}
%\nocite*
\bibliography{bibtoric}

\newcommand{\etalchar}[1]{$^{#1}$}
\providecommand{\bysame}{\leavevmode\hbox to3em{\hrulefill}\thinspace}
\providecommand{\MR}{\relax\ifhmode\unskip\space\fi MR }
% \MRhref is called by the amsart/book/proc definition of \MR.
\providecommand{\MRhref}[2]{%
  \href{http://www.ams.org/mathscinet-getitem?mr=#1}{#2}
}
\providecommand{\href}[2]{#2}
\begin{thebibliography}{CDD{\etalchar{+}}20}

\bibitem[AHK18]{AdiprasitoHuhKac2018}Karim Adiprasito, June Huh, and Eric Katz,
  \emph{Hodge theory for combinatorial geometries}, Annals of Mathematics
  \textbf{188} (2018), no.~2, 381--452.

\bibitem[AL22]{AdamsGroebnerBases}William~W Adams and Philippe Loustaunau,
  \emph{An introduction to {G}r{\"o}bner bases}, vol.~3, American Mathematical
  Society, 2022.

\bibitem[BD24]{BackmanDanner2024}Spencer Backman and Rick Danner, \emph{Convex
  geometry of building sets}, arXiv preprint arXiv:2403.05514 (2024).

\bibitem[Bib22]{Bibby2022}Christin Bibby, \emph{Matroid schemes and geometric
  posets}, arXiv preprint arXiv:2203.15094 (2022).

\bibitem[BM14]{BrandenMoci}Petter Br\"and\'en and Luca Moci, \emph{The
  multivariate arithmetic {T}utte polynomial}, Trans. Amer. Math. Soc.
  \textbf{366} (2014), no.~10, 5523--5540. \MR{3240933}

\bibitem[CD17]{CallegaroDelucchi17}Filippo Callegaro and Emanuele Delucchi,
  \emph{The integer cohomology algebra of toric arrangements}, Adv. Math.
  \textbf{313} (2017), 746--802. \MR{3649237}

\bibitem[CD24]{CallegaroDelucchi24}Filippo Callegaro and Emanuele Delucchi,
  \emph{Combinatorial generators for the cohomology of toric arrangements},
  Topology and its Applications \textbf{355} (2024), 109027.

\bibitem[CDD{\etalchar{+}}20]{CDDMP2020}Filippo Callegaro, Michele D'Adderio,
  Emanuele Delucchi, Luca Migliorini, and Roberto Pagaria,
  \emph{Orlik-{S}olomon type presentations for the cohomology algebra of toric
  arrangements}, Trans. Amer. Math. Soc. \textbf{373} (2020), no.~3,
  1909--1940. \MR{4068285}

\bibitem[Cor23]{Coron23}Basile Coron, \emph{Matroids, {F}eynman categories, and
  {K}oszul duality}, 2023.

\bibitem[DCG18]{DeConciniGaiffi2018}Corrado De~Concini and Giovanni Gaiffi,
  \emph{Projective wonderful models for toric arrangements}, Adv. Math.
  \textbf{327} (2018), 390--409. \MR{3761997}

\bibitem[DCG19]{DeConciniGaiffi2019}\bysame, \emph{Cohomology rings of
  compactifications of toric arrangements}, Algebr. Geom. Topol. \textbf{19}
  (2019), no.~1, 503--532. \MR{3910589}

\bibitem[DCG21]{DeConciniGaiffi2021}\bysame, \emph{A differential algebra and
  the homotopy type of the complement of a toric arrangement}, Atti Accad. Naz.
  Lincei Rend. Lincei Mat. Appl. \textbf{32} (2021), no.~1, 1--21. \MR{4251238}

\bibitem[DCGP20]{DeConciniGaiffiPapini2020}Corrado De~Concini, Giovanni Gaiffi,
  and Oscar Papini, \emph{On projective wonderful models for toric arrangements
  and their cohomology}, Eur. J. Math. \textbf{6} (2020), no.~3, 790--816.
  \MR{4151719}

\bibitem[DCP95]{DeConciniProcesi95}Corrado De~Concini and Claudio Procesi,
  \emph{Wonderful models of subspace arrangements}, Selecta Math. (N.S.)
  \textbf{1} (1995), no.~3, 459--494. \MR{1366622}

\bibitem[DCP05]{DeConciniProcesi2005}C.~De~Concini and C.~Procesi, \emph{On the
  geometry of toric arrangements}, Transform. Groups \textbf{10} (2005),
  no.~3-4, 387--422. \MR{2183118}

\bibitem[dD12]{dAntonioDelucchi12}Giacomo d'Antonio and Emanuele Delucchi,
  \emph{A {S}alvetti complex for toric arrangements and its fundamental group},
  Int. Math. Res. Not. IMRN (2012), no.~15, 3535--3566. \MR{2959041}

\bibitem[dD15]{dAntonioDelucchi15}\bysame, \emph{Minimality of toric
  arrangements}, J. Eur. Math. Soc. (JEMS) \textbf{17} (2015), no.~3, 483--521.
  \MR{3323196}

\bibitem[DM13]{DAdderioMoci}Michele D'Adderio and Luca Moci, \emph{Arithmetic
  matroids, the {T}utte polynomial and toric arrangements}, Adv. Math.
  \textbf{232} (2013), 335--367. \MR{2989987}

\bibitem[DR18]{DelucchiRiedel}Emanuele Delucchi and Sonja Riedel, \emph{Group
  actions on semimatroids}, Adv. in Appl. Math. \textbf{95} (2018), 199--270.
  \MR{3759217}

\bibitem[FK04]{FeichtnerKozlov2004}Eva-Maria Feichtner and Dimitry~N. Kozlov,
  \emph{Incidence combinatorics of resolutions}, Selecta Math. (N.S.)
  \textbf{10} (2004), no.~1, 37--60. \MR{2061222}

\bibitem[Ful93]{FultonToricVarieties}William Fulton, \emph{Introduction to
  toric varieties}, no. 131, Princeton university press, 1993.

\bibitem[Ful98]{FultonIntersectionTheory}\bysame, \emph{Intersection theory},
  second ed., Ergebnisse der Mathematik und ihrer Grenzgebiete. 3. Folge. A
  Series of Modern Surveys in Mathematics [Results in Mathematics and Related
  Areas. 3rd Series. A Series of Modern Surveys in Mathematics], vol.~2,
  Springer-Verlag, Berlin, 1998. \MR{1644323}

\bibitem[Gai97]{gaiffiselecta}Giovanni Gaiffi, \emph{{Blowups and cohomology
  bases for De Concini-Procesi models of subspace arrangements}}, Selecta
  Mathematica, New Series \textbf{3} (1997), no.~3, 315--333.

\bibitem[GPS22]{GaiffiPapiniSiconolfi2022}Giovanni Gaiffi, Oscar Papini, and
  Viola Siconolfi, \emph{A basis for the cohomology of compact models of toric
  arrangements}, 2022.

\bibitem[Kee92]{Keel92}Sean Keel, \emph{Intersection theory of moduli space of
  stable {$n$}-pointed curves of genus zero}, Trans. Amer. Math. Soc.
  \textbf{330} (1992), no.~2, 545--574. \MR{1034665}

\bibitem[Li09]{Li09}Li~Li, \emph{Wonderful compactification of an arrangement
  of subvarieties}, Michigan Math. J. \textbf{58} (2009), no.~2, 535--563.
  \MR{2595553}

\bibitem[Moc12a]{MociTuttePoly}Luca Moci, \emph{A {T}utte polynomial for toric
  arrangements}, Trans. Amer. Math. Soc. \textbf{364} (2012), no.~2,
  1067--1088. \MR{2846363}

\bibitem[Moc12b]{Moci2012}\bysame, \emph{Wonderful models for toric
  arrangements}, Int. Math. Res. Not. IMRN (2012), no.~1, 213--238.
  \MR{2874932}

\bibitem[MP22]{PagariaMoci}Luca Moci and Roberto Pagaria, \emph{On the
  cohomology of arrangements of subtori}, J. Lond. Math. Soc. (2) \textbf{106}
  (2022), no.~3, 1999--2029. \MR{4498547}

\bibitem[MS11]{MociSettepanella}Luca Moci and Simona Settepanella, \emph{The
  homotopy type of toric arrangements}, J. Pure Appl. Algebra \textbf{215}
  (2011), no.~8, 1980--1989. \MR{2776437}

\bibitem[Pag20]{PagariaOAM}Roberto Pagaria, \emph{Orientable arithmetic
  matroids}, Discrete Math. \textbf{343} (2020), no.~6, 111872, 8. \MR{4067961}

\bibitem[PP21]{PagariaPaolini}Roberto Pagaria and Giovanni Paolini,
  \emph{Representations of torsion-free arithmetic matroids}, European J.
  Combin. \textbf{93} (2021), Paper No. 103272, 17. \MR{4186616}

\bibitem[Yuz97]{yuzvinsky}Sergey Yuzvinsky, \emph{{Cohomology bases for the De
  Concini-Procesi models of hyperplane arrangements and sums over trees}},
  Invent.\ Math. \textbf{127} (1997), no.~2, 319--335.

\end{thebibliography}
\end{document}